\theoremstyle{plain}
\numberwithin{equation}{section}
\newtheorem{theorem}{Theorem}[section]
\newtheorem{corollary}[theorem]{Corollary}
\newtheorem{lemma}[theorem]{Lemma}
\newtheorem{example}[theorem]{Example}
\newtheorem{remark}[theorem]{Remark}
\newtheorem{hypothesis}[theorem]{Hypothesis}
\newtheorem{definition}[theorem]{Definition}
\newcommand{\ud}{\mathrm{d}}
\newcommand{\ra}{\rightarrow}
\newcommand{\la}{\lambda}
\newcommand{\e}{\varepsilon}
\newcommand{\NN}{\mathbb{N}}
\newcommand{\RR}{\mathbb{R}}
\newcommand{\CC}{\mathbb{C}}
\newcommand{\EE}{\mathbb{E}}
\newcommand{\Wp}{\mathcal{W}_p}
\newcommand{\wW}{\mathcal{W}}
\newcommand{\bT}{\mathcal{T}}
\title[Ergodicity bounds and cutoff stability for stable OU systems]{Ergodicity bounds for stable Ornstein-Uhlenbeck systems in Wasserstein distance with applications to cutoff stability}
\author{Gerardo Barrera\\
\textnormal{ University of Helsinki, Department of Mathematics and Statistics.\\
P.O. Box 68, Pietari Kalmin katu 5, FI-00014 Helsinki, Finland.}\\
gerardo.barreravargas@helsinki.fi\\ \hfill\\}
\author{\hfill\\\hfill\\
Michael A. H\"ogele\\
Universidad de los Andes, Facultad de Ciencias, Departamento de Matem\'aticas. \\
Cra 1 \# 18A - 12, 111711 Bogot\'a, Colombia\\
ma.hoegele@uniandes.edu.co}
\date{\today}
\begin{document}
\begin{abstract}
{
This article establishes cutoff stability also known as abrupt thermalization for generic multidimensional Hurwitz stable Ornstein-Uhlenbeck systems with (possibly degenerate) L\'evy noise at fixed noise intensity. The results are based on several ergodicity quantitative lower and upper bounds some of which make use of the recently established shift linearity property of the Wasserstein-Kantorovich-Rubinstein distance by the authors. It covers such irregular systems like Jacobi chains and more general networks of coupled harmonic oscillators with a heat bath (including L\'evy excitations) at constant temperature on the outer edges and the so-called Brownian gyrator. }
\end{abstract}

\maketitle

\bigskip
\section{\textbf{Introduction}}

\textbf{
The Wasserstein-Kantorovich-Rubinstein (WKR) metric is a statistically robust and computationally flexible metric between different probability laws. Certain replica techniques allow to establish new upper and lower bounds for the thermalization for Ornstein-Uhlenbeck systems driven by Brownian motion or other L\'evy drivers. We show, that in the case of the 1d linear oscillator with Brownian forcing  and the Brownian gyrator, lengthy explicit calculations allow to establish the property of cutoff stability, also known as abrupt convergence. With the help of the previously established ergodicity bounds, we obtain this property without any additional calculation, other than Hurwitz stability and a genericity assumption of the interaction matrix. As a show case for the complexity of systems which are covered by our theorem, and where explicit calculations are out of question, we study Jacobi chains a more general networks of coupled harmonic oscillators with a fixed amplitude Brownian or L\'evy-type external heat bath forcing. 
}

Since the days of von Smoluchovski \cite{vSmoluchovski1906}, Langevin \cite{Langvin1908}, and Uhlenbeck and Ornstein \cite{OrensteinUhlenbeck1930} more than a century ago and even earlier \cite{Ja17}, the Ornstein-Uhlenbeck process and its extensions to higher and infinite dimensions, and different noises are still 
intensely studied objects in statistical physics, neuronal networks, probability and statistics. Despite their apparent simplicity, and an ever better understanding of them, 
its (multidimensional) dynamics and ergodicity remains an active field of research, see for instance \cite{Ba14,BrzezniakZabczyk,GTR23,GL19,JL14, Lachaud2005, Mikami, SSB23,SaYa, Simon11,SGA18,TL19, WANGLETTERS} and the numerous references therein. 
Among several competing concepts to measure the thermalization of the current state of such systems 
to their respective dynamic equilibria, such as for instance relative entropy, total variation or the Hellinger distance, and others \cite{DLVV21, DL82, MPZ19, Mi22, OP82, ZP19} the WKR distance (see Definition~\ref{def:Wasserstein} below) stands out: due to its statistical robustness \cite{CPP20,CT16, Panaretos, PJDS14, SP23+}, explicit formulas in the Gaussian case, as for instance \cite{BL23, Givens, Ta11},
{
its deep connections to optimal transport and the Monge-Kantorovich problem, and an extensive calculus which allows for many explicit calculations and sharp bounds, see for instance~\cite{BHPWA, BJL19, BJL19b, CPP20, CT23, FG21, Gel, OPV14, PC19, Panaretos, Vi09}.}

{In this paper we quantify the ergodicity in the WKR distance 
for  multidimensional L\'evy driven Ornstein-Uhlenbeck systems with fixed noise amplitude (see formula \eqref{eq:modelo0} and  Section~\ref{s:ergodicitybounds}).
The novelty of our approach in this paper consists in a particular change of perspective of the classical \textit{cutoff phenomenon} (mathematical terminology) or \textit{abrupt thermalization} (physics terminology) for linear systems with additive noise. Essentially the complete mathematical and physics literature on the cutoff phenomenon in discrete time and space, describes the cutoff phenomenon -roughly speaking- as an asymptotic threshold phenomenon for a family of objects parametrized by an \textit{internal parameter} $\e$ of the system, often representing the (inverse) size of the state space, the dimension of the space or for instance as noise amplitude. 
Standard references in this highly active field of research include for instance \cite{BLY06, BY2014,BASU, BD92, B-HLP19, BBF08, BCS19, BCS18, CSC08, HS20, Jonson, LL19, La16, LanciaNardiScoppola12, LLP10, LPW, LubetzkySly13, Meliot14,  Trefethenbrothers, Yc99}, starting with the seminal papers by Diaconis and Aldous on card shuffling \cite{Al83, AD87, AD, DIA96}. 
In the physics literature this concept has received quite some attention recently in the context of quantum Markov chains \cite{Kastoryano12}, chemical reaction kinetics \cite{BOK10}, quantum information processing \cite{Kastoryano13}, statistical mechanics \cite{CS21, LubetzkySly13}, coagulation-fragmentation equations \cite{Pego17, Pego16}, dissipative quantum circuits \cite{JohnssonTicozziViola17}, open quadratic fermionic systems \cite{Vernier20}, neuronal models \cite{dOTL18}, granular flows \cite{WC18}, and chaotic microfluid mixing \cite{LW08}. }

{
In a series of articles \cite{Ba18, BHPWA, BHPWANO, BHPTV,BHPSPDE, BJ, BJ1, BL21, BP} the authors have studied the so-called cutoff phenomenon for abstract Langevin equations with $\e$-small, additive L\'evy noise $\ud L$ (see Definition~\ref{def:defL} below) given by the following stochastic differential equation 
 \begin{equation}\label{eq:modelo00}
\ud X_t(x)=-AX_t(x)\ud t+\e \ud L_t, \quad t\geq 0,\quad \textrm{ with }\quad X_0(x)=x \in \RR^m, x\neq 0, \e>0, 
\end{equation}
with $-A\in \RR^{m\times m}$ being a Hurwitz stable matrix (see Definition~\ref{def:Hurwitz} below) under different kinds of metrics. 
For clarity we introduce various concepts of cutoff phenomena. Assume that there is a parametrized family of processes $X^\e=(X^\e_t(x))_{t\geq 0}$, $\e>0$, of 
invariant measures $\mu^\e$, of renormalized distances $d_\e$ on the space of probability distributions in the state space and a deterministic time scale $t_{\e}$ such that for 
\begin{equation}\label{e:cutoffphenomenon}
D_{\e, x}(t) := d_\e(X^\e_{t}(x), \mu^\e), \qquad x\in \RR^d, x \neq 0, \e>0,    
\end{equation}
we have one of the three cutoff phenomena in the sense of \cite{BY2014}:
\begin{enumerate}
 \item A time scale $(t_\e)_{\e >0}$ induces a \textit{(simple) cutoff phenomenon} if $D_{\e,x}(\delta t_\e)$ tends to the maximal value $M$ of the distance if $\delta < 1$, to $0$ if $\delta > 1$.
 \item A time scale induces a \textit{window cutoff phenomenon} if $\liminf_{\e \ra 0} D_{\e,x}(t_\e + r )$ tends 
to $M$ as $r$ tends to $-\infty$, and $ \limsup_{\e \ra 0} D_{\e,x}(t_\e + r)$ tends to $0$ as $r$ 
tends to $\infty$.
 \item A time scale $(t_\e)_{\e >0}$ induces a \textit{profile cutoff phenomenon} with 
 \textit{cutoff profile} $\mathcal{P}_x$ if $\mathcal{P}_x(r) = \lim_{\e\ra 0} D_{\e,x}(t_\e+ r)$ exists for all $r\in \RR$, 
 and $\mathcal{P}_x$ tends to $M$ at $-\infty$, to $0$ at $\infty$.
\end{enumerate}
The parameter $\e$ in the previously mentioned articles, is the noise intensity in \eqref{eq:modelo00}. In \cite{Ba18,BJ, BJ1, BP, BHPTV} $d_\e = d_{TV}$ the (unnormalized) total variation distance, while in \cite{BHPWA, BHPWANO, BHPSPDE} the distance is given by $d_\e = \wW_p /\e$, the renormalized WKR-distance of order $p\geq 2$. 
}

{
The main idea of this article is based on the following observation. Note that the concepts (1), (2) and (3) for $D_{\e, x}$ defined in \eqref{e:cutoffphenomenon} along a time scale $t_\e$ do not exclude the special case, where $X$ and $\mu$ do \textit{not} depend on the noise intensity parameter $\e$, that is, for \textit{fixed} noise amplitude $\sigma$. 
That is, the object of study of this article is the Ornstein-Uhlenbeck system \eqref{eq:modelo0}, which does not depend on any parameter $\e$ in any sense. 
More precisely, we consider the dynamics of the unique strong solution $X = (X_t)_{t\geq 0}$ of the following 
stochastic differential equation 
 \begin{equation}\label{eq:modelo0}
\ud X_t(x)=-AX_t(x)\ud t+\sigma\ud L_t, \quad t\geq 0,\quad \textrm{ with }\quad X_0(x)=x \in \RR^m,
\end{equation}
towards its dynamic equilibrium distribution $\mu$ on $\RR^m$, where 
$-A\in \RR^{m\times m}$ is Hurwitz stable $\sigma \in \RR^{m\times n}$ and $L$ a $n$-dimensional Brownian motion. More generally, $L$ can be a L\'evy process, such as a compound Poisson process or an $\alpha$-stable L\'evy flight. See for instance \cite{APPLEBAUMBOOK, Mao2008, Protter, Sato}. 
}

{
In this case, finding a particular time scale $(t_\e)_{\e>0}$ such that for 
\begin{equation}\label{e:cutoffstability}
\mathcal{D}_{\e, x}(t) := d_\e(X_t(t), \mu) = \frac{\mathcal{W}_p(X_t(x), \mu)}{\e}
\end{equation}
satisfies the concepts (1)-(3), yields the (asymptotic) reparametrization of the $\e$-smallness of the WKR-distance by $t_\e$ and yields for instance for the concept (1) the threshold phenomenon as $\e \ra 0$ 
\[
\mathcal{W}_p(X_{\delta t_\e}(x), \mu) \begin{cases} \gg \e & \mbox{ if } \delta \in (0,1),\\ \ll \e & \mbox{ if } \delta>1. \end{cases}
\]
The time scale $t_\e$ sharply divides substantially smaller than $\e$-small and substantially larger than $\e$ small values of the distance to the dynamical equilibrium. Due to this $\infty/0$ dichotomy this cutoff phenomenon without internal parameter is called \textit{cutoff stability}. 
We use the notions of \textit{simple cutoff stability}, \textit{window cutoff stability} and \textit{profile cutoff stability} for $\mathcal{D}_{\e, x}(t)$ satisfying (1), (2) or (3), respectively, 
for a time scale $(t_\e)_{\e>0}$. 

In this situation there obviously still appears a parameter $\e>0$ in \eqref{e:cutoffphenomenon}, but in contrast to \eqref{e:cutoffstability}, where it had the role of an \textit{internal} parameter, 
it rather plays the role of an \textit{external} yard stick parameter, which controls the asymptotic WKR mixing times. In \cite{BHPGBM} the authors established such a type of ``nonasymptotic'' cutoff phenomenon for a process with fixed multiplicative noise under certain commutativity conditions. In \cite{BHPPSHELL}, it was established for an infinite dimensional linear energy shell model with scalar random energy injection. This article closes the gap in the literature and studies this concept in the most natural and useful finite dimensional setting with additive noise. 
}

{
We stress that the situation of \eqref{eq:modelo0} is more complicated than the situation of \eqref{eq:modelo00} since it is not quasideterministic, in the sense of being essentially a deterministic system with $\e$-small, though random perturbation. Instead, in \eqref{eq:modelo0} appears a full-blown dynamical equilibrium, which might be rather irregular in the sense of not admitting a density. 
This difficulty is enhanced by the fact that $-A$ is only Hurwitz stable but not diagonalizable in general, which is natural for instance in the case of linear oscillators with friction. Therefore, arbitrarily large Jordan blocks with possibly non-real eigenvalues are permitted, which are present in the limiting distribution. It is one of the advantages of the WKR distance, in comparison to the total variation distance, that it does require any particular regularity beyond the existence of certain moments. 
In particular, it does not exclude degenerate noise injection of the system, such as in the case of the linear oscillator (see Example~\ref{ex:bio} or networks of those Example~\ref{ex:oscJC} and \eqref{ex:oscmoregeneral}). In particular, the WKR distance avoids the technicalities such as controllability associated to the Kalman conditions and hypoellipticity, typically present for results in the total variation distance and the relative entropy, see \cite[Chapter 6]{Pavliotis} and references therein. We consider additive perturbations by multidimensional L\'evy noise processes with first moments, which include Brownian motion, deterministic linear functions, compound Poisson processes and its possibly infinite superposition, such as $\alpha$-stable processes with $1< \alpha\leq 2$, among others. By a standard enhancement of the state space, we also cover the situation of Ornstein-Uhlenbeck noise perturbations with each of the preceding types of noise. 
}

The article is organized in three main parts:  
First, we provide in Theorem~\ref{thm:couplings} of Subsection~\ref{ss:anewformula} the state of the art including new general lower and upper bounds of $\wW_p(X_t(x), \mu)$ of order $p>0$. 
In Subsection~\ref{sub:nonblper} we collect particularly useful Gaussian bounds for $\wW_p$, $p\geq 1$, applied in Subsection~\ref{sec:cutsta}. 

Using the results of Section~\ref{s:ergodicitybounds}, we study \textit{cutoff stability} for systems of the form \eqref{eq:modelo0}. We start with non-degenerate Gaussian systems \eqref{eq:modelo0} for which we use the explicit formulas of Subsection~\ref{ss:anewformula} in order to establish cutoff stability for systems \eqref{eq:modelo0} for the first time in a simple case. More precisely, for normal drift matrix $A$ and non-degenerate dispersion matrix $\sigma$ we provide new explicit formulas for the $\wW_2$-distance in Theorem~\ref{thm:Gauss}, which then imply cutoff stability in the sense of \eqref{e:cutoffstability}. 
In Example~\ref{ex:oscBM} we continue with the study of the scalar damped harmonic oscillator subject to moderate Brownian forcing, which has a degenerate dispersion matrix $\sigma$ in the product space of position and momentum and which is not covered by the formulas in Theorem~\ref{th:OUGcut}. We establish the presence of cutoff stability \eqref{e:cutoffstability} for this elementary, though degenerate, system by explicit calculations, which illustrate the remarkable level of complexity and the infeasibility in general to stick to explicit calculations even for {linear} 2-d Gaussian systems. 

In Theorem~\ref{thm:cutoffWp} of Subsection~\ref{sec:32} we show that the non-asymptotic bounds Theorem~\ref{thm:couplings} in Subsection~\ref{ss:anewformula} are good enough to establish cutoff stability \eqref{e:cutoffstability} in considerably greater generality than Theorem~\ref{thm:Gauss}. Theorem~\ref{thm:cutoffWp} directly covers Example~\ref{ex:oscBM}, the Brownian gyrator in Example~\ref{ex:gyrator}, a biophysical transcription-translation linear oscillator model in Example~\ref{ex:bio},
and
the benchmark system of a Jacobi chain of oscillators with a heat bath of constant noise intensity on the outer edges in Example~\ref{ex:oscJC}.
 More precisely, in Theorem~\ref{thm:cutoffWp} we establish cutoff stability under general $\sigma$ and generic assumptions on $A$, which are substantially weaker than the results in Section~\ref{sec:cutsta}. 
In particular, they include Hurwitz stable, but non-normal interaction matrices $A$, a possibly degenerate dispersion matrix $\sigma$ and a large class of L\'evy drivers, including Brownian motion and $\alpha$-stable L\'evy flights for $\alpha>1$. In Example~\ref{ex:oscmoregeneral} we comment on the validity of our results for more general networks topology.

In Appendix~\ref{a:WKRproperties} the reader finds {a list of }the most relevant properties of the WKR-distances.  

\bigskip
\section{\textbf{Non-asymptotic ergodicity estimates for the multidimensional OU process}}\label{s:ergodicitybounds}

\noindent In this section we show non-asymptotic ergodicity bounds 
for solutions of the system \eqref{eq:modelo0} under the following hypotheses. 

\begin{hypothesis}[Positivity]\label{hyp:stable}
The matrix $A\in \mathbb{R}^{m\times m}$ is constant and
all its eigenvalues have strictly positive real parts.
\end{hypothesis}

\begin{definition}\label{def:Hurwitz}
A matrix such that $-A$ satisfies Hypothesis~\ref{hyp:stable} is called Hurwitz stable. 
 \end{definition}

\begin{hypothesis}[Diffusion matrix]\label{hyp:diff} The matrix $\sigma\in \mathbb{R}^{m\times n}$ is constant.
\end{hypothesis}

\noindent {We stress that Hypothesis~\ref{hyp:diff} on our model \eqref{eq:modelo0} states that the diffusion matrix is \textit{fixed} and non-small. In fact, there is no particular parameter dependence whatsoever. } 
For convenience we formulate the following elementary lemma {for} Hurwitz stable matrices. 

\begin{lemma}\label{lem:Hurwitz}
Let $A, B\in \mathbb{R}^{m\times m}$ be Hurwitz stable matrices. Then we have the following: 
\begin{enumerate}
 \item $A$ is invertible and $A^{-1}$ is Hurwitz. 
 \item If $AB = BA$, then $A+B$ is Hurwitz stable. If $AB \neq BA$, there are counterexamples. 
\end{enumerate}
\end{lemma}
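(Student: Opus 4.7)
The plan is to treat the two items separately and use spectral characterizations throughout, since Hurwitz stability by the convention of the excerpt is an eigenvalue condition: $A$ is Hurwitz stable iff all eigenvalues of $A$ have strictly negative real parts.

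For item (1), I would argue as follows. Since $0$ is not an eigenvalue of $A$ (every eigenvalue has strictly negative real part, hence nonzero), $A$ is invertible. The eigenvalues of $A^{-1}$ are exactly the reciprocals $1/\lambda$ of the eigenvalues $\lambda=a+bi$ of $A$. A direct computation
\begin{equation}
\frac{1}{\lambda}=\frac{\overline{\lambda}}{|\lambda|^2}=\frac{a-bi}{a^2+b^2}
\end{equation}
gives $\mathrm{Re}(1/\lambda)=a/(a^2+b^2)<0$ whenever $a<0$, so all eigenvalues of $A^{-1}$ have strictly negative real parts and $A^{-1}$ is Hurwitz stable.

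For the commuting case of item (2), the key tool is simultaneous triangularization of commuting matrices over $\CC$: since $AB=BA$, there exists $P\in\mathrm{GL}(m,\CC)$ such that $P^{-1}AP=T_A$ and $P^{-1}BP=T_B$ are both upper triangular, with the diagonals of $T_A$ and $T_B$ listing (in some common order) the eigenvalues $\lambda_1,\dots,\lambda_m$ of $A$ and $\mu_1,\dots,\mu_m$ of $B$, respectively. Then $P^{-1}(A+B)P=T_A+T_B$ is upper triangular with diagonal entries $\lambda_i+\mu_i$, which are therefore the eigenvalues of $A+B$. Since $\mathrm{Re}(\lambda_i)<0$ and $\mathrm{Re}(\mu_i)<0$ for every $i$, one has $\mathrm{Re}(\lambda_i+\mu_i)<0$, so $A+B$ is Hurwitz stable.

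For the counterexample in the non-commuting case, I would write down the classical $2\times 2$ pair
\begin{equation}
A=\begin{pmatrix}-1 & 10 \\ 0 & -1\end{pmatrix},\qquad B=\begin{pmatrix}-1 & 0 \\ 10 & -1\end{pmatrix},
\end{equation}
verify that both are Hurwitz stable (each has a repeated eigenvalue $-1$), note that $AB\neq BA$, and compute that $A+B=\begin{pmatrix}-2 & 10 \\ 10 & -2\end{pmatrix}$ has eigenvalues $-2\pm 10$, one of which is positive, so $A+B$ is not Hurwitz. None of the steps here present a real obstacle; the only subtlety is recalling that commuting matrices admit a common triangularization (a standard consequence of the existence of a common eigenvector for commuting operators over an algebraically closed field, proved by induction on dimension).
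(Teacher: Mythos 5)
Your proof is correct, and for the substantive part of item (2) it takes a genuinely different route from the paper. Item (1) coincides with the paper's (one-line) argument: $0$ is not in the spectrum, and $\mathrm{Re}(1/\lambda)=\mathrm{Re}(\lambda)/|\lambda|^2$ preserves the sign of the real part. For the commuting case, the paper argues dynamically: it characterizes Hurwitz stability through decay of the matrix exponential, uses commutativity (via the Baker--Campbell--Hausdorff--Dynkin formula) to factor $e^{-(A+B)t}=e^{-At}e^{-Bt}$, and bounds the product of operator norms by a decaying exponential, concluding asymptotic stability of the linear system $\dot X_t=-(A+B)X_t$. You instead invoke simultaneous upper triangularization of commuting matrices, which identifies the spectrum of $A+B$ exactly as a matched sum $\lambda_i+\mu_i$ of eigenvalues of $A$ and $B$. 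Your route is purely algebraic, avoids the $\epsilon$-bookkeeping of the exponential estimates (where the paper's write-up in fact contains some sign slips between $e^{Bt}$ and $e^{-Bt}$ and an auxiliary condition $-\alpha+\beta<0$), and yields slightly more information, namely the precise location of the spectrum of $A+B$ rather than just asymptotic stability. The counterexamples are the same classical construction --- two triangular matrices with one large off-diagonal entry --- yours written with the negative-real-part convention and the paper's with the positive-real-part convention of its Hypothesis on $A$; both computations check out.
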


\noindent The proof of Lemma~\ref{lem:Hurwitz} is given in Appendix~\ref{a:Hurwitz}. There is a large literature 
on the respective matrix theory we refer to \cite{BKR17, Bh07, LT85}.

\begin{definition}[WKR distance of order $p>0$]\label{def:Wasserstein}
For probability distributions $\mu_1,\mu_2$ on $\mathbb{R}^m$ 
with finite $p$-th moments, $p>0$, the WKR distance $\wW_p$  of order $p>0$ is defined by
\begin{equation}
\Wp(\mu_1,\mu_2):=\left(\inf\limits_{\bT} \int_{\mathbb{R}^m\times \mathbb{R}^m} |u-v|^p \bT(\ud u,\ud v)\right)^{\min\{1,1/p\}},
\end{equation}
where $\bT$ is any joint distribution between $\mu_1$ an $\mu_2$, that is, 
\[\bT(A\times \mathbb{R}^m)=\mu_1(A)\quad \mbox{ and }\quad \bT(\mathbb{R}^m\times A)=\mu_2(A)\qquad \mbox{ for all Borel-measurable sets }A \subset \mathbb{R}^m.\]
\end{definition}
\noindent {The main basic properties of the WKR-distance are gathered in Lemma~\ref{lem:properties}. For more details see \cite{Panaretos, Vi09}.}

\noindent For convenience of notation we do not distinguish a random variable $X$ and its law $\mathbb{P}_X$ as an argument of $\mathcal{W}_p$. That is, for random variables $X$, $Y$ and probability measure $\mu$ we write $\mathcal{W}_p(X, Y)$ instead of $\mathcal{W}_p(\mathbb{P}_{X}, \mathbb{P}_{Y})$, $\mathcal{W}_p(X, \mu)$ instead of $\mathcal{W}_p(\mathbb{P}_{X}, \mu)$ etc.

{
\subsection{\textbf{A formula for the WKR-2-distance}}\hfill\label{ss:anewformula}\\
}
\smallskip 

{
Denote by $\mathcal{N}(m, C)$ the $m$-dimensional normal distribution with expectation $m$ and covariance matrix $C$. For a square matrix $C=(C_{i,j})\in \mathbb{R}^{m\times m}$ 
we denote its trace by $\textsf{Tr}(C):=\sum_{j=1}^m c_{j,j}$. For any matrix $M$ with real coefficients we denote by $M^*$ its transpose, while for any matrix $M$ with complex coefficients, $M^*$ denotes the Hermitian transpose.}

{
Using the translation invariance given in Item~b) in Lemma~\ref{lem:properties} we have 
\begin{equation}
\begin{split}
\mathcal{W}_2(X_t(x),\mu)&=\mathcal{W}_2(\mathcal{N}(e^{-At}x+A^{-1}(I_m-e^{-At})\sigma b, \Sigma_t),\mathcal{N}(A^{-1}\sigma b,\Sigma_\infty))\\
&
=\mathcal{W}_2(\mathcal{N}(e^{-At}x-A^{-1}e^{-At}\sigma b, \Sigma_t),\mathcal{N}(0,\Sigma_\infty))\\
&=\mathcal{W}_2(\mathcal{N}(e^{-At}(x-A^{-1}\sigma b), \Sigma_t),\mathcal{N}(0,\Sigma_\infty)), 
\end{split}
\end{equation}
where
\begin{align*}
\Sigma_t &= \int_0^t e^{-As}\sigma \sigma^* e^{-A^*s} \ud s \qquad \mbox{ and }\qquad \Sigma_\infty = \int_0^\infty e^{-As}\sigma \sigma^* e^{-A^*s} \ud s.
\end{align*}
}

{
 \noindent We show an exact formula of the WKR-distance of order $2$ between a standard multidimensional OU process (with $\sigma\sigma^* = I_m$ and $L$ a standard Brownian motion in $\RR^m$), and its invariant measure $\mu = \mathcal{N}(0, \Sigma_\infty)$, see Remark~\ref{rem:commnormal} (3), which we are not aware of in the literature. 
 \bigskip
\begin{theorem}[$\wW_2$-ergodicity formula for normal interaction matrices and full Brownian forcing]\label{th:OUGcut}\hfill
\\
\noindent
Assume that $\sigma\sigma^* = I_m$.
\begin{enumerate}
 \item If $A$ is a positive definite symmetric matrix with eigenvalues $0<\lambda_1\leq \lambda_2\leq \cdots\leq \lambda_m$ and corresponding orthogonal eigenvectors $v_1,\ldots,v_m$, then for any $x\in \mathbb{R}^m$ and $t\geq 0$ it follows that
\begin{equation}\label{e:PitagorasS}
\mathcal{W}_2(X_t(x),\mu)=
\left(\sum_{j=1}^{m} e^{-2\lambda _j t}\langle x-A^{-1}\sigma b,v_j \rangle^2+
\sum_{j=1}^{m}\frac{1}{2\lambda_j}\frac{e^{-4\lambda_j t}}{(\sqrt{1-e^{-2\lambda_j t}}+1)^2}\right)^{1/2}.
\end{equation}
 \item If $A$ is a normal matrix $A$ that is, $A A^* = A^* A$, and $A+A^*$ has the following eigenvalues ordered by  $0<\varphi_1\leq \varphi_2\leq \cdots\leq \varphi_m$ and corresponding {(generalized)} orthonormal eigenvectors $v_1,\ldots,v_m\in \mathbb{R}^m$, then
for any $x\in \mathbb{R}^m$ and $t\geq 0$ it follows that
\begin{align}\label{e:Pitagoras}
&\mathcal{W}_2(X_t(x),\mu)=\left(\sum_{j=1}^{m} e^{-\varphi_j t}\langle x-A^{-1}\sigma b,v_j\rangle^2+
\sum_{j=1}^m \frac{1}{\varphi_j} \frac{e^{-2\varphi_j t}}{(\sqrt{1-e^{-\varphi_j t}}+1)^2}\right)^{1/2},
\end{align}
where $\varphi_j=2\textsf{Re}(\la_j)$, $j=1,\ldots,m$ and $\la_j$, are the eigenvalues of $A$ (ordered in ascending by its real parts).
 \end{enumerate}
\end{theorem}
}

{
\begin{remark}\label{rem:explan}
\begin{enumerate}
 \item The main insight from formulas \eqref{e:PitagorasS} and \eqref{e:Pitagoras} 
is that the WKR-2 distance (implicitly due to the Pythagorean theorem) naturally reflects the dynamics of the mean 
and the variance of the Ornstein-Uhlenbeck process. 
In case of $m=n= 1$ we have for the solution of 
\begin{align*}
\ud X_t(x) = - \la X_t(x) \ud t + \ud B_t, \qquad X_0(x) = x,
\end{align*}
that the limiting distribution is $\nu = \mathcal{N}(0, \frac{1}{2\lambda})$ and 
\[
\EE[X_t(x)] = e^{-\lambda t} x
\qquad \mbox{ and }\qquad \mbox{Var}(X_t(x))  = \frac{1}{2\lambda} (1- e^{-2 \lambda t}) 
\]
that is, the variance adjusts to the limiting variance $\frac{1}{2\lambda}$ at double the speed, 
than the mean converges to $0$ in the limit.   
\item In the case of L\'evy drivers we observe that a $m$-dimensional pure jump L\'evy process $L$ 
cannot be generically decomposed by a sort of principal axes transform just as multivariate Brownian motion in a vector of independent scalar L\'evy processes  
\[
L = (L^1, \dots, L^m). 
\]
Clearly, such L\'evy processes do exist but they only refer to L\'evy flights 
with jumps parallel to the axes, which is a very special subcase of limited interest, see~\cite{Sato}. 
\\
\item We conjecture the mean versus variance separation of scales of item (2), 
to be true for all L\'evy processes with second moments. 
Let $L=(L_s)_{s\geq 0}$ be a symmetric $\alpha$-stable process with $0<\alpha\leq 2$. More precisely, the  characteristic function of the marginal at time $t\geq 0$, $L_t$, is given by $\mathbb{E}[e^{\textsf{i} z L_t}]=e^{-t|z|^\alpha}$, $z\in \mathbb{R}$. By
Lemma~17.1. in \cite{Sato} for the Ornstein-Uhlenbeck process
$X_t=e^{-\lambda t}x+\sigma e^{-\lambda t}\int_{0}^{t}e^{\lambda s}\ud L_s$
it follows that the characteristic function of 
$X_t$ is given by
\begin{equation}
\begin{split}
\mathbb{R}\ni z\mapsto\mathbb{E}[e^{\textsf{i} z X_t}]&=\exp\left(\textsf{i}e^{-\lambda t}xz+\int_{0}^t |e^{-\lambda s}\sigma z|^\alpha \ud s\right)\\
&=\exp\left(\textsf{i}e^{-\lambda t}xz+\sigma^\alpha\frac{1-e^{-\lambda \alpha t}}{\lambda \alpha}|z|^\alpha \right),
\end{split}
\end{equation}
which yields that 
$X_t=e^{-\lambda t}x+\sigma\left(\frac{1-e^{-\lambda \alpha t}}{\lambda \alpha}\right)^{1/\alpha}L_1$, where the equality is in distribution sense. Hence, the invariant measure $\mu$ has law $\sigma\left(\frac{1}{\lambda \alpha}\right)^{1/\alpha}L_1$.
Therefore, for $1<p<\alpha$ it follows that
\begin{equation}
\begin{split}
\mathcal{W}_p(X_t,\mu)&=\mathcal{W}_p\left(e^{-\lambda t}x+\sigma\left(\frac{1-e^{-\lambda \alpha t}}{\lambda \alpha}\right)^{1/\alpha}L_1,\sigma\left(\frac{1}{\lambda \alpha}\right)^{1/\alpha}L_1\right)\\
&\leq \left(\mathbb{E}\left[|e^{-\lambda t}x+\frac{\sigma}{(\lambda \alpha)^{1/\alpha}}(1-(1-e^{-\lambda \alpha t})^{1/\alpha})L_1|^p\right]\right)^{1/p}.
\end{split}
\end{equation}
We see, that for $x\neq 0$ (or more generally $x \neq \lambda^{-1}\sigma \EE[L_1]$, see Remark~\ref{rem:recentrar}), 
the convergence of the right-hand side to $0$ as $t\ra\infty$ is of order $e^{-\lambda t}$. 
However, starting precisely in $x = 0$ we obtain due to the Taylor expansion of 
\[
1- (1-y^\alpha)^\frac{1}{\alpha} = \frac{1}{\alpha} x^\alpha + O(x^{2\alpha})_{x\ra 0}
\]
the accelerated asymptotic rate $e^{-\lambda \alpha t}$ as $t\ra\infty$.\\ 
\item 
In higher dimensions there are no general known explicit formulas for the WKR-2 distance (or any other WKR-$p$ distance) between non-Gaussian distributions. For one dimensional formulas, see for instance Section~3 in \cite{GHKM17} and the references therein. 
That is, one is sent back to the original optimization over all couplings (or replica). 
Optimizers, so-called, optimal couplings are unknown, which is why, the general case for multidimensional L\'evy drivers with second moments seems hard to prove.\\
With no identities for the optimal coupling at hand, 
we can only prove suboptimal upper bounds, as given in Theorem~\ref{thm:couplings} and Theorem~\ref{thm:Gauss}, which cannot distinguish the mean-variance split of item (3). 
These results, however, hold for general WKR-$p$ distances, $p\geq 1$, and are not restricted to order $p =2$. We note that in the non-Gaussian case even these new suboptimal lower and upper bounds are not straightforward. In particular, 
we stress that lower bounds are typically hard to obtain. 
While these estimates will not allow for a fine properties such \textit{profile cutoff stability} (see item (3) in the introduction), but still the weaker property of \textit{simple cutoff stability} and \textit{window cutoff stability}. 
\end{enumerate}
\end{remark}
}

{
\begin{proof}[Proof of Theorem~\ref{th:OUGcut}]
It is enough to show the case of $A$ being a normal matrix $A A^* = A^* A$. 
Recall that $A\in \mathbb{R}^{m\times m}$ and $\sigma\in \mathbb{R}^{m\times n}$. Note that  
$A$ and $A^*$ have the same eigenvalues, which we denote by $\lambda_j$, $1\leq j\leq m$. 
The Pythagorean Theorem \cite[Proposition~7]{Givens} yields
\begin{align}
\mathcal{W}_2(\mathcal{N}(e^{-At}x,\Sigma_t),\mathcal{N}(0,\Sigma_\infty)) 
& = 
 \left(
|e^{-At }\widetilde{x}|^2+ \textsf{Tr}\left({\Sigma_t+\Sigma_\infty-2(\Sigma^{1/2}_t\Sigma_\infty \Sigma^{1/2}_t)^{1/2}}\right)\right)^{1/2}\nonumber\\
&= \left(
|e^{-At }\widetilde{x}|^2+ \textsf{Tr}\left({\Sigma_t+\Sigma_\infty-2(\Sigma^{1/2}_\infty \Sigma_t \Sigma^{1/2}_\infty)^{1/2}}\right)\right)^{1/2}, \label{e:Pythagoras}
\end{align}
where $\widetilde{x}:=(x-A^{-1}\sigma b)$, 
\begin{align*}
\Sigma_t &= \int_0^t e^{-As}\sigma \sigma^* e^{-A^*s} \ud s \qquad \mbox{ and }\qquad \Sigma_\infty = \int_0^\infty e^{-As}\sigma \sigma^* e^{-A^*s} \ud s.
\end{align*}
By hypothesis $\sigma \sigma^*=I_m$.
By the Baker-Campbell-Hausdorff-Dynkin formula \cite[Chapter 5]{Ha15} we have 
\begin{align*}\label{eq:fhg1}
\Sigma_t &= \int_0^t e^{-As} e^{-A^*s} \ud s = \int_0^t e^{-s (A+A^*)} \ud s 
=  (A+A^*)^{-1} \Big(I_m - e^{-t(A+A^*)}\Big). 
\end{align*}
By Item~(2)~in Lemma~\ref{lem:Hurwitz} it follows that $(-A)+(-A^*)$ is Hurwitz stable.
Therefore, $\lim_{t\to \infty}\Sigma_t=B^{-1}=:\Sigma_\infty$, where $B:=A+A^*$. 
Since $B$ is a symmetric matrix, the
eigenvalues $\varphi_1,\ldots,\varphi_m$ of $B$ are real numbers. Moreover, without loss of generality we can assume that $0<\varphi_1\leq \cdots\leq \varphi_m$.
Hence, 
\begin{align*}
 &\textsf{Tr}(\Sigma_t) =  \sum_{j=1}^m \frac{1}{\varphi_j} \Big(1- e^{-\varphi_j t}\Big)\qquad\mbox{ and }\qquad \textsf{Tr}(\Sigma_\infty ) = \sum_{j=1}^m \frac{1}{\varphi_j}. \end{align*}
On the other hand, we have 
\begin{align*}
\Sigma_\infty^\frac{1}{2} \Sigma_t \Sigma_\infty^\frac{1}{2} 
&= B^{-\frac{1}{2}}B^{-1}(I_m -e^{-tB}) B^{-\frac{1}{2}} \\
&=
B^{-\frac{1}{2}}B^{-1}B^{-\frac{1}{2}}(I_m -e^{-tB}) 
=
B^{-2} (I_m - e^{- t B}), 
\end{align*}
which implies by the spectral calculus theorem 
\begin{align*}
&-2\,\textsf{Tr}\left((\Sigma_\infty^\frac{1}{2} \Sigma_t \Sigma_\infty^\frac{1}{2} )^\frac{1}{2}\right) 
 = -2\sum_{j=1}^m \frac{1}{\varphi_j} \sqrt{1- e^{- \varphi_j t }}, 
\end{align*}
and hence 
\begin{align*}
\textsf{Tr}\Big( \Sigma_t + \Sigma_\infty - 2 (\Sigma_\infty^\frac{1}{2} \Sigma_t \Sigma_\infty^\frac{1}{2})^\frac{1}{2}\Big)
&=  \sum_{j=1}^m \Big(\frac{1}{\varphi_j} (1-e^{-\varphi_j t}) + \frac{1}{\varphi_j} - 2\frac{1}{\varphi_j} \sqrt{1- e^{- \varphi_j t })}\Big)\\
&=  \sum_{j=1}^m \left( \sqrt{\frac{1}{\varphi_j}}-\sqrt{\frac{1}{\varphi_j} (1- e^{-\varphi_j t})}\right)^2\\
&=  \sum_{j=1}^m \frac{1}{\varphi_j} \frac{e^{-2\varphi_j t}}{(\sqrt{1-e^{-\varphi_j t}}+1)^2}.
\end{align*}
Again, the Baker-Campbell-Hausdorff-Dynkin formula gives
\[
|e^{-tA}\widetilde{x}|^2=\widetilde{x}^* e^{-tA^*}e^{tA}\widetilde{x}=\widetilde{x}^*e^{-Bt}\widetilde{x}=\sum_{j=1}^{m} e^{-\varphi_j t}y^2_j,
\]
where $y=O\widetilde{x}$, and $O^*DO=B$ with $O=(O_{i,j})_{i,j\in \{1,\ldots,m\}}$ being an orthogonal matrix in $\mathbb{R}^{m\times m}$, $D=\textsf{diag}(\varphi_1,\ldots,\varphi_m)$ being a diagonal matrix in $\mathbb{R}^{m\times m}$ having in the diagonal the eigenvalues of $B$. More precisely, we have 
$y_i=\langle O_i,\widetilde{x}\rangle$, where $O_i=(O_{i,j})_{j\in\{1,\ldots,m\}}$.
\\
In the sequel, we calculate $\varphi_j$, $j=1,\ldots, m$.
Since $A$ is a normal matrix, we have that
$A=U^*DU$, where $UU^*=U^*U=I_m$.
Recall that $A\in \mathbb{R}^{m\times m}$.
Then $A^T=A^*=U^*D^*U$, where $T$ denotes the transpose.
Thus, $A+A^T=U^*(D+D^*)U$, yields that the eigenvalues of 
$A+A^*$ are $2\textsf{Re}(\la_j)$, $j=1,\ldots,m$ where 
$\la_j$, $j=1,\ldots,m$ are the eigenvalues of $A$.
\\
This completes the proof. 
\end{proof}
}

\bigskip 
\subsection{\textbf{Hypotheses on the non-Brownian L\'evy perturbations}}\label{sub:nonblper}

\begin{definition}[L\'evy noise]\label{def:defL} The driving noise  $L=(L_t)_{t\geq 0}$ is a L\'evy process in $\RR^n$, that is, a stochastic process starting in $0\in \mathbb{R}^n$  with stationary and independent increments, and right-continuous paths (with finite left limits).
\end{definition}

\begin{remark}~
\begin{enumerate}
    \item The class of L\'evy processes $L$ contains several cases of interest: (1) $n$-dimensional standard Brownian motion, (2) $n$-dimensional symmetric and asymmetric $\alpha$-stable L\'evy flights, (3) $n$-dimensional compound Poisson process, (4) deterministic linear function $t\mapsto \gamma t$, $\gamma\in \mathbb{R}^n$.
    \item Under (2) and (3), the paths contain jump discontinuities. Furthermore, the existence of right-continuous paths with left limits (for short RCLL or c\`adl\`ag from the French ``continue \`a droite, limite \`a gauche") is not strictly necessary and it can be always inferred up to zero sets of paths. 
\end{enumerate}
\end{remark}

\noindent For each probability space $(\Omega,\mathcal{A},\mathbb{P})$, which carries $L$
Hypotheses~\ref{hyp:stable} and \ref{hyp:diff} imply the existence and path-wise uniqueness of the equation \eqref{eq:modelo0} given by
\begin{equation}\label{eq:repre}
X_t(x)=e^{-At}x+X_t(0)\quad \textrm{with}\quad X_t(0):=e^{-At}\int_{0}^t e^{As}\sigma\ud L_s
,\quad t\geq 0,\, x\in \mathbb{R}^m,
\end{equation}
where $e^{Mt}:=\sum_{k=0}^{\infty}M^k t^k/k!$ for any $M\in \mathbb{R}^{m\times m}$ and $t\in \mathbb{R}$. 
{
\begin{remark}\label{rem:recentrar}
When $L$ has at least first moment,
we point out that $L$ needs not be centered in general, however, by the L\'evy property of stationary and independent increments (see Definition~\ref{def:defL} below) it follows that
$L_t=\widetilde{L}_t+bt$ a.s., where $b\in \mathbb{R}^m$, 
$\widetilde{L}=(\widetilde{L}_t)_{t\geq 0}$ is a centered L\'evy process. In other words, the mean of \eqref{eq:modelo0} and its limiting distribution are not necessarily centered at the origin, but in $A^{-1}(I_m-e^{-At})\sigma b$ and $A^{-1}\sigma b$, respectively. All our results are valid for any $b\in \mathbb{R}^m$.
\end{remark}
}

We denote by $|\cdot|$ the norm induced by the standard Euclidean inner product $\langle \cdot, \cdot \rangle$ in $\RR^m$. Moreover, we use the standard Frobenius matrix norm $\|M\|^2 = \sum_{i, j}M_{i,j}^2$, $M\in \RR^{m\times n}$. We denote the mathematical expectation over $(\Omega, \mathcal{A}, \mathbb{P})$ by $\EE$. 

\noindent The following hypothesis is necessary and sufficient to provide the existence of a limiting measure. 
\begin{hypothesis}\label{hyp:logmoments}
The time one marginal of $L$ satisfies $\mathbb{E}[\log(1+|L_1|)]<\infty$.
\end{hypothesis}
\noindent Note that Hypothesis~\ref{hyp:logmoments} includes Brownian motion, all $\alpha$-stable L\'evy flights and compound Poisson processes where the jump measure has a finite logarithmic moments. 
We point out that under Hypotheses~\ref{hyp:stable}, \ref{hyp:diff} and \ref{hyp:logmoments} there is a unique stationary probability distribution $\mu$ for the random dynamics \eqref{eq:modelo0}. Moreover, for any initial data $x\in \mathbb{R}^m$, $X_t(x)$ converges in distribution to $\mu$ as $t\to \infty$, see for instance \cite{SaYa, WangJAP} and \cite{KS14} for the Gaussian case.

\bigskip
\subsection{\textbf{Ergodicity bounds via disintegration for $\wW_p$, $p\geq 1$}}\hfill\\

\noindent In order to measure the convergence towards the dynamic equilibrium by $\wW_p$, $p\geq 1$, 
we assume the following stronger condition than Hypothesis~\ref{hyp:logmoments}.
\begin{hypothesis}[Finite moment]\label{hyp:moments}\hfill\\
There is $p>0$ such that $\mathbb{E}[|L_1|^p]<\infty$.
\end{hypothesis}
\begin{remark}
Note that Hypothesis~\ref{hyp:moments} yields 
$\mathbb{E}[|L_t|^p]<\infty$ and
$\mathbb{E}[|X_t(x)|^p]<\infty$ for any $t\geq 0$ and $x\in\mathbb{R}^m$. 
\end{remark}

\noindent Since the convergence in $\wW_p$ is equivalent to the convergence in distribution and the {simultaneous} convergence of the $p$-th absolute moments we have to ensure that the thermalization coming from Hypothesis~\ref{hyp:logmoments} also holds in the stronger WKR sense.  

\begin{lemma}[Ergodicity in $\wW_p$] Assume Hypotheses~\ref{hyp:stable}, \ref{hyp:diff} and \ref{hyp:moments} for some $p>0$. Then there is a unique probability measure $\mu$ in $\mathbb{R}^m$ such that for all $x\in \mathbb{R}^m$
\[
\lim\limits_{t\to \infty}\Wp(X_t(x), \mu)=0.
\]
In particular, it is stationary for \eqref{eq:modelo0}, that is, for all $t\geq 0$,  $X_t(\mu)=\mu$ in distribution.
\end{lemma}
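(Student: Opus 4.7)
The plan is to upgrade the weak convergence $X_t(x)\to\mu$, already guaranteed by Hypothesis~\ref{hyp:logmoments}, to convergence in $\wW_p$ by producing an explicit coupling inherited from the driving L\'evy process $L$. The first step is a time-reversal identity: for any fixed $t\geq 0$, the reversed increments $\{L_t-L_{(t-s)^-}\}_{s\in[0,t]}$ coincide in law with $\{L_s\}_{s\in[0,t]}$, so the change of variables $u=t-s$ in the mild formula \eqref{eq:repre} yields
\begin{equation}\label{e:reversalplan}
X_t(x)\stackrel{d}{=}e^{-At}x+\int_0^t e^{-Au}\sigma\,\ud L_u.
\end{equation}
The point of this rewriting is that the Hurwitz stability of $-A$, which provides constants $C,\alpha>0$ with $\|e^{-At}\|\leq Ce^{-\alpha t}$, is now attached directly to the deterministic integrand, rather than sitting outside as a pre-factor that must compete with a possibly growing stochastic integral.

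The technical heart of the argument is to show that $Z_t:=\int_0^t e^{-Au}\sigma\,\ud L_u$ converges as $t\to\infty$ in $L^p(\Omega;\RR^m)$ for $p\geq 1$ and, for $p\in(0,1)$, in the sense of convergence of $p$-th absolute moments. I would appeal to Bichteler--Jacod/Rosenthal-type moment inequalities for stochastic integrals with a deterministic matrix-valued integrand against a L\'evy process whose time-one marginal lies in $L^p$. The target estimate is, for $0\leq s<t$,
\begin{equation}\label{e:LpestPlan}
\EE\bigg|\int_s^t e^{-Au}\sigma\,\ud L_u\bigg|^p\;\leq\; C_p\int_s^t \|e^{-Au}\sigma\|^{p\wedge 2}\,\ud u\;\leq\; C'_p\, e^{-\alpha(p\wedge 2)\, s},
\end{equation}
the last bound being the exponential Hurwitz decay. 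This makes $(Z_t)_{t\geq 0}$ Cauchy in the appropriate $L^p$-sense and produces a limit $Z_\infty$; by uniqueness of the weak limit $\mu$ one identifies the law of $Z_\infty$ with $\mu$.

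Realising $\widetilde X_t(x):=e^{-At}x+Z_t$ (which by \eqref{e:reversalplan} has the same law as $X_t(x)$) together with $Z_\infty$ on a common probability space yields a natural coupling between $X_t(x)$ and $\mu$, and inserting it into the definition of $\wW_p$ gives
\begin{equation}\label{e:couplingplan}
\wW_p(X_t(x),\mu)\;\leq\;\bigg(\EE\bigg|e^{-At}x-\int_t^\infty e^{-Au}\sigma\,\ud L_u\bigg|^p\bigg)^{\min\{1,1/p\}},
\end{equation}
whose right-hand side tends to $0$ as $t\to\infty$ by $|e^{-At}x|\leq Ce^{-\alpha t}|x|$ and by \eqref{e:LpestPlan} evaluated at $s=t$. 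Stationarity is then automatic from the flow property $X_{t+s}(x)=X_t(X_s(x))$ together with the $\wW_p$-continuity of the transition semigroup at fixed $t$ (which itself follows from \eqref{e:reversalplan} applied to two different initial data): letting $s\to\infty$ sends the left-hand side to $\mu$ and the right-hand side to $X_t(\mu)$, forcing $X_t(\mu)=\mu$. The step I expect to be the main obstacle is making \eqref{e:LpestPlan} rigorous in the regime $p\in(0,1)$, where the classical Burkholder--Davis--Gundy inequality is unavailable and $\wW_p$ is not a norm; here the cleanest route seems to be a L\'evy--It\^o decomposition of $L$ into Gaussian, compensated small-jump and large-jump parts, applying a Bichteler--Jacod-type bound piece by piece and summing the contributions.
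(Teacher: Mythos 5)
Your plan is correct in outline, but note that the paper does not actually prove this lemma: it simply cites \cite[Proposition 2.2]{Ma04} (Masuda), whose argument is essentially the one you reconstruct. Your two key ingredients appear elsewhere in the paper itself: the time-reversal identity $X_t(0)\stackrel{d}{=}\int_0^t e^{-Au}\sigma\,\ud \tilde L_u$ is exactly the computation \eqref{e:OUlaw} in the proof of Theorem~\ref{thm:Gauss}, and your coupling bound
\begin{equation}
\wW_p(X_t(x),\mu)\leq\Big(\EE\Big[\big|e^{-At}x-\int_t^\infty e^{-Au}\sigma\,\ud L_u\big|^p\Big]\Big)^{\min\{1,1/p\}}
\end{equation}
is the rightmost inequality of \eqref{eq:lowerupper}. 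The only place where your write-up is an oversimplification is the displayed moment estimate: the bound $\EE|\int_s^t e^{-Au}\sigma\,\ud L_u|^p\leq C_p\int_s^t\|e^{-Au}\sigma\|^{p\wedge 2}\,\ud u$ is not the correct general form of the Bichteler--Jacod/Rosenthal inequality, which for $p\geq 1$ also carries a drift/compensator term of the form $\big(\int_s^t\|e^{-Au}\sigma\|\,\ud u\big)^p$ (nonvanishing whenever $\EE[L_1]\neq 0$) and, for $p>2$, an additional jump term $\int_s^t\int|e^{-Au}\sigma z|^p\,\nu(\ud z)\,\ud u$. For the specific integrand at hand this is harmless --- Hurwitz stability makes every one of these terms decay exponentially in $s$, so $(Z_t)$ is Cauchy in the relevant $L^p$-sense and the conclusion stands --- and you correctly flag the case $p\in(0,1)$ (where one uses subadditivity of $r\mapsto r^p$ and the fact that $(X,Y)\mapsto\EE|X-Y|^p$ is a metric) as the part requiring the L\'evy--It\^o decomposition. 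Your derivation of stationarity from the flow property together with the synchronous-coupling contraction $\wW_p(X_t(\nu_1),X_t(\nu_2))\leq\|e^{-At}\|^{\min\{1,p\}}\wW_p(\nu_1,\nu_2)$ is also sound and matches the disintegration argument of Appendix~\ref{A:proofcouplings}. In short: no gap, a self-contained version of the cited external proof, with one intermediate inequality stated more crudely than it should be.
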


\noindent This result is shown in \cite[Proposition 2.2]{Ma04}. 
By  \cite[Proposition 2.2]{Ma04} Hypotheses ~\ref{hyp:stable}, \ref{hyp:diff} and \ref{hyp:moments}  {imply} the existence of a unique equilibrium distribution $\mu$, and its statistical characteristics such as $p$-th moments are given there. 

We now formulate the first main result on the ergodicity bounds for the marginal of $X$ at time $t$. 

 \begin{theorem}[Quantitative ergodicity bounds for L\'evy driven Ornstein-Uhlenbeck systems]\label{thm:couplings}\hfill\\
  Assume the Hypotheses~\ref{hyp:stable}, \ref{hyp:diff} and \ref{hyp:moments} for some $p>0$. 
Then we have for all $t\geq 0$, {$x\in \RR^m$} the following bounds: 
\begin{enumerate}
\item Upper bounds: 
\begin{align*}
\mathcal{W}_p(X_t(x),\mu) \leq 
\begin{cases} 
|e^{-At}x|+\mathcal{W}_p(X_t(0),\mu), & \\[2mm]
\int_{\mathbb{R}^m} |e^{-At}(x-y)|^{\min\{1,p\}}\mu(\ud y), &
\end{cases} 
\end{align*}
and, in particular, 
\[
\mathcal{W}_p(X_t(0),\mu) \leq  \int_{\mathbb{R}^m} |e^{-At}y|^{\min\{1,p\}}\mu(\ud y).
\]
\item Lower bounds:
\begin{align*}
\mathcal{W}_p(X_t(x),\mu) \geq 
\begin{cases}
|e^{-At}x|-\mathcal{W}_p(X_t(0),\mu) & \mbox{if }p\geq 1,\\[2mm]
\left|e^{-At}x+\mathbb{E}[X_t(0)]-\int_{\mathbb{R}^m} z \mu(\ud z ) \right| & \mbox{if }p\geq 1,\\[2mm]
|e^{-At}x|^p-2\mathbb{E}[|X_t(0)|^p]-\mathcal{W}_p(X_t(0),\mu) & \mbox{if }p\in (0,1),\\[2mm]
0 & \mbox{if }p>0,
\end{cases}
\end{align*}
where for the identity matrix $I_m\in \RR^{m\times m}$ we have 
\[
\mathbb{E}[X_t(0)]=e^{-At} \int_{0}^{t} e^{As}\sigma \mathbb{E}[L_1] \ud s = A^{-1} (I_m- e^{-At}) \sigma \mathbb{E}[L_1].
\]
\end{enumerate}
 \end{theorem}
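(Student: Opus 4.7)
The plan is to reduce every inequality to two universal ingredients: the affine representation $X_t(x) = e^{-At}x + X_t(0)$ from \eqref{eq:repre}, which lets one compare $X_t(x)$ and $\mu$ through translates of a single distribution, and the invariance $X_t(\mu) = \mu$, which realizes $\mu$ as the law of $X_t(Y)$ for $Y\sim\mu$ independent of $L$. Combining these with standard structural properties of $\mathcal{W}_p$ (triangle inequality, reverse triangle, Kantorovich--Rubinstein duality, the shift linearity of \cite[Lemma~2.2]{BHPWA}, and subadditivity of $t\mapsto t^p$ for $p\in(0,1)$) will yield the listed bounds one by one.

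For the \emph{upper bounds}, the first line will come from the triangle inequality for $\mathcal{W}_p$ via the intermediate point $X_t(0)$: since $X_t(x)$ and $X_t(0)$ differ only by the deterministic vector $e^{-At}x$, shift linearity identifies $\mathcal{W}_p(X_t(x),X_t(0))$ with $|e^{-At}x|$. For the second line we will couple $X_t(x)$ with $X_t(Y)$ synchronously through a single trajectory of $L$, with $Y\sim\mu$ independent of $L$: invariance gives $X_t(Y)\sim\mu$, the representation yields the pathwise identity $X_t(x)-X_t(Y) = e^{-At}(x-Y)$, and inserting this into Definition~\ref{def:Wasserstein} produces the stated integral bound. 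The corollary for $\mathcal{W}_p(X_t(0),\mu)$ is then just the case $x=0$.

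For the \emph{lower bounds} and $p\geq 1$, the first line will follow from the reverse triangle inequality in the metric $\mathcal{W}_p$ combined with the identity $\mathcal{W}_p(X_t(x),X_t(0))=|e^{-At}x|$ used already above. The second line will use the Kantorovich--Rubinstein duality (testing against coordinate-wise $1$-Lipschitz functions) to lower-bound $\mathcal{W}_1$ by the absolute value of the mean difference, together with the monotonicity $\mathcal{W}_p\geq \mathcal{W}_1$ and the identity $\mathbb{E}[X_t(x)] = e^{-At}x + \mathbb{E}[X_t(0)]$. For $p\in(0,1)$ the starting point is the subadditivity $|a+b|^p \leq |a|^p + |b|^p$: under any coupling $(U,V)$ of $X_t(x)$ and $\mu$ it gives $|U-V|^p \geq |U|^p - |V|^p$, hence $\mathcal{W}_p(X_t(x),\mu)\geq \mathbb{E}[|X_t(x)|^p] - \int|v|^p\mu(\ud v)$; two further applications of the same subadditivity will bound $\mathbb{E}[|X_t(x)|^p]$ below by $|e^{-At}x|^p - \mathbb{E}[|X_t(0)|^p]$ (from the representation) and $\int|v|^p\mu(\ud v)$ above by $\mathbb{E}[|X_t(0)|^p] + \mathcal{W}_p(X_t(0),\mu)$ (using any coupling of $X_t(0)$ with $\mu$). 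The trivial bound $\mathcal{W}_p\geq 0$ needs no argument.

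Finally, the \emph{mean formula} is a direct computation: taking expectations in \eqref{eq:repre}, applying Fubini, and using $\mathbb{E}[L_s]=s\mathbb{E}[L_1]$ from the independent stationary increments of $L$ gives $\mathbb{E}[X_t(0)] = e^{-At}\int_0^t e^{As}\sigma\mathbb{E}[L_1]\,\ud s$; invertibility of $A$ from Lemma~\ref{lem:Hurwitz}, the identity $\int_0^t e^{As}\,\ud s = A^{-1}(e^{At}-I_m)$, and commutativity of $A^{-1}$ with $e^{-At}$ then yield the closed form. The main obstacle I expect is the $p\in(0,1)$ lower bound: one must chain the three subadditivity applications without the error terms swamping the principal term $|e^{-At}x|^p$, and it is worth confirming that the resulting bound is nontrivial in the asymptotic regime where it will later be used to establish cutoff stability.
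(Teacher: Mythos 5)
Your plan coincides with the paper's own proof for most items: the first upper bound and the first lower bound via shift linearity plus the (reverse) triangle inequality are exactly the paper's argument; your Kantorovich--Rubinstein duality for the mean lower bound replaces the paper's equivalent direct coupling step ($|\mathbb{E}[U]-\mathbb{E}[V]|\le\int|u-v|\,\mathcal{T}(\ud u,\ud v)$ for any coupling, then minimize, then $\mathcal{W}_1\le\mathcal{W}_p$); and your three-fold subadditivity chain for $p\in(0,1)$ reproves from scratch the lower half of \eqref{ec:cotaabajop01}, which the paper simply cites from \cite{BHPWA} and combines with the reverse triangle inequality. I checked that your chain produces exactly the constant $2$ in front of $\mathbb{E}[|X_t(0)|^p]$, so the ``swamping'' you worry about does not occur; the bound is what it is, and its usefulness is a separate matter. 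The mean formula is routine as you describe (it of course presupposes $p\ge1$ so that $\mathbb{E}[L_1]$ exists).

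The one genuine gap is the second upper bound when $p>1$. The synchronous coupling $(X_t(x),X_t(Y))$, $Y\sim\mu$, inserted into Definition~\ref{def:Wasserstein} yields
\[
\mathcal{W}_p(X_t(x),\mu)\le\Big(\int_{\mathbb{R}^m}|e^{-At}(x-y)|^{p}\,\mu(\ud y)\Big)^{1/p},
\]
the $L^p(\mu)$-average, whereas the theorem asserts the $L^1(\mu)$-average $\int|e^{-At}(x-y)|\,\mu(\ud y)$, which by Jensen is strictly smaller whenever $\mu$ is nondegenerate. So your step ``inserting this into Definition~\ref{def:Wasserstein} produces the stated integral bound'' does not go through for $p>1$ (it does for $p\le1$, where $\min\{1,p\}=p$ and the outer exponent is $1$). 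Be aware that the paper bridges this gap by a disintegration inequality of the form $\mathcal{W}_p(X_t(x),X_t(\mu))\le\int\mathcal{W}_p(X_t(x),X_t(y))\,\mu(\ud y)$; this is convexity of $\mathcal{W}_p$ itself (rather than of $\mathcal{W}_p^p$) in its second argument, which fails for $p>1$: already at $t=0$ one has exactly $\mathcal{W}_p(\delta_x,\mu)=(\int|x-y|^p\mu(\ud y))^{1/p}>\int|x-y|\,\mu(\ud y)$, contradicting the asserted bound. Your coupling argument therefore delivers the correct, attainable form of the estimate for $p>1$; to match the statement as written you would either have to keep the exponent structure $(\int|e^{-At}(x-y)|^{p}\mu(\ud y))^{\min\{1,1/p\}}$ or restrict the $L^1$-form to $p\le1$. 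All later uses of this bound in the paper (Theorem~\ref{thm:cutoffWp}) only need the decay rate in $t$, for which either form suffices.
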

 \noindent The proof is given in Appendix~\ref{A:proofcouplings}. It heavily draws on the properties of the WKR distance gathered in Lemma~\ref{lem:properties} of Appendix~\ref{a:WKRproperties}. 
 By Jensen's inequality we have $\mathbb{E}[|X_t(0)|^p]\leq \mathbb{E}[|X_t(0)|]^p$ for $p\in (0,1]${. An} upper bound of $\mathbb{E}[|X_t(0)|]$ is given in \cite[p. 1000-1001]{WangJAP}.

\bigskip
\subsection{\textbf{Ergodicity bounds via Gaussian estimates for $\mathcal{W}_p$, $p\geq 2$}}\hfill\\

\noindent It is remarkable, that under many circumstances, that is, for $p\geq 2$, meaningful Gaussian estimates can be given for WKR-distances of order $p\geq 2$ between general non-Gaussian L\'evy-OU processes and their equilibrium, in the following sense.   

 \begin{theorem}[Gaussian ergodicity bounds for non-Brownian, L\'evy Ornstein-Uhlenbeck systems]
 \label{thm:Gauss}\hfill\\ 
 Let Hypothesis~\ref{hyp:stable}, \ref{hyp:diff} and \ref{hyp:moments} be satisfied for some $p\geq 2$. Then for all $t\geq 0$ and $x\in \mathbb{R}^m$ it follows 
\begin{equation}\label{eq:lowerupper}
\begin{split}
&\Big(
   |e^{-At}x|^2 
    + 
   \textsf{Tr}\left(
    \Sigma_t+\Sigma_\infty -2(\Sigma^{1/2}_t \Sigma_\infty \Sigma^{1/2}_t)^{1/2}
 \right)
 \Big)^{1/2}\\[2mm] 
 &\qquad = \mathcal{W}_2(\mathcal{N}(e^{-At}x,\Sigma_t),\mathcal{N}(0,\Sigma_\infty))\nonumber\\[4mm]
 &\qquad \qquad \leq \mathcal{W}_p(X_t(x),\mu)\\
 &\qquad \qquad \qquad  \leq |e^{-At}x|+ 
  \left(\mathbb{E}[|\int_{t}^{\infty}  e^{-Ar}\sigma\ud L_r|^p]\right)^{1/p},
\end{split}
  \end{equation}
\begin{equation}\label{eq:defsigma}
\mbox{where }\qquad \Sigma_t:=\int_{0}^{t} e^{-A s}\sigma \sigma^* e^{-A^* s} \ud s,\qquad\mbox{ and }\qquad 
\Sigma_\infty:=\int_{0}^{\infty} e^{-A s}\sigma \sigma^* e^{-A^* s} \ud s.
\end{equation}
\end{theorem}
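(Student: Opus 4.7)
The plan is to prove the three inequalities in \eqref{eq:lowerupper} separately: first the upper bound by the triangle inequality and a synchronous coupling along the driving noise, then the lower bound via a Gaussian (Gelbrich) comparison, and finally to identify the sandwiched quantity with the closed-form Bures--Wasserstein expression.

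\textbf{Upper bound.} From \eqref{eq:repre} I would write $X_t(x) = e^{-At}x + X_t(0)$ and combine the triangle inequality with the shift linearity of $\wW_p$ from \cite[Lemma 2.2]{BHPWA} to obtain $\wW_p(X_t(x), \mu) \leq |e^{-At}x| + \wW_p(X_t(0),\mu)$. To estimate $\wW_p(X_t(0),\mu)$ I use the stationary and independent increments of $L$: the time-reversal identity gives $X_t(0)\stackrel{d}{=} \int_0^t e^{-As}\sigma\,\ud L_s$, while $\mu$ is the law of $Y := \int_0^\infty e^{-As}\sigma\,\ud L_s$ (well-defined under Hypotheses~\ref{hyp:stable} and \ref{hyp:logmoments}). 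On a common probability space this produces the admissible coupling with difference $Y - \int_0^t e^{-As}\sigma\,\ud L_s = \int_t^\infty e^{-As}\sigma\,\ud L_s$, whose $L^p$ norm yields the claimed upper bound.

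\textbf{Lower bound.} Since $p\geq 2$, Jensen's inequality gives $\wW_2\leq \wW_p$, reducing the task to a lower bound on $\wW_2(X_t(x),\mu)$. Here I would invoke the Gelbrich inequality, which states that for any probability measures $\nu_1,\nu_2$ on $\RR^m$ with finite second moments, means $m_i$ and covariances $C_i$, one has $\wW_2(\nu_1,\nu_2) \geq \wW_2(\mathcal{N}(m_1,C_1),\mathcal{N}(m_2,C_2))$. Using the representation \eqref{eq:repre} together with the L\'evy--It\^o isometry, and normalizing $L$ so that $\EE[L_1]=0$ and $\operatorname{Cov}(L_1)=I_n$ (any deviation is absorbed into $\sigma$ and a drift contribution), one computes that $X_t(x)$ has mean $e^{-At}x$ and covariance $\Sigma_t$, whereas $\mu$ has mean $0$ and covariance $\Sigma_\infty$ with $\Sigma_t,\Sigma_\infty$ as in \eqref{eq:defsigma}.

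\textbf{Gaussian formula and main obstacle.} The remaining equality in \eqref{eq:lowerupper} is the classical Bures--Wasserstein formula (Olkin--Pukelsheim \cite{OP82}, Givens--Shortt \cite{Givens}), $\wW_2^2(\mathcal{N}(m_1,C_1),\mathcal{N}(m_2,C_2)) = |m_1-m_2|^2 + \textsf{Tr}\bigl(C_1+C_2 - 2(C_1^{1/2}C_2 C_1^{1/2})^{1/2}\bigr)$, applied with the data above. I expect the lower bound to be the main obstacle: one has to verify that Gelbrich's inequality is applicable in this non-Gaussian, possibly jump-type setting (only $p\geq 2$ moments are actually needed) and that the mean/covariance matching claimed in the statement is consistent with the chosen normalization of $L$. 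A secondary technical point is a dominated-convergence argument ensuring that $\Sigma_\infty$ is well defined and equals the covariance of $\mu$, which relies on the exponential decay $\|e^{-At}\|=O(e^{-\lambda t})$ afforded by Hypothesis~\ref{hyp:stable}.
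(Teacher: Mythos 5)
Your proposal follows essentially the same route as the paper: a synchronous coupling along the driving noise (via the time-reversal identity $X_t(0)\stackrel{d}{=}\int_0^t e^{-As}\sigma\,\ud L_s$) for the upper bound, Jensen's inequality $\wW_2\leq\wW_p$ plus Gelbrich's comparison theorem for the lower bound, and the Givens--Shortt/Bures--Wasserstein formula for the closed-form expression; the paper merely applies Minkowski to a single coupling rather than splitting off $|e^{-At}x|$ by shift linearity first, which is the same computation. Your explicit remark that the identification of $(e^{-At}x,\Sigma_t)$ and $(0,\Sigma_\infty)$ as the means and covariances requires normalizing $L$ to be centered with unit covariance is a point the paper leaves implicit, and is worth keeping.
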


\begin{proof}[Proof of Theorem~\ref{thm:Gauss}]
We start with the proof of the most right inequality of \eqref{eq:lowerupper}. 
{Integration by part implies} 
\begin{equation}\label{e:OUlaw}
\begin{split}
X_t(0)&=e^{-At}
\left(e^{At}\sigma L_t-\int_{0}^{t}Ae^{As}\sigma L_s \ud s \right)=\sigma L_t-\int_{0}^{t}Ae^{-A(t-s)}\sigma L_s \ud s\\
&=\sigma L_t-\int_{0}^{t}Ae^{-Ar}\sigma L_{t-r} \ud r=
\sigma L_t-\int_{0}^{t}Ae^{-Ar}\sigma(L_{t-r}-L_t) \ud r -
\int_{0}^{t}Ae^{-Ar} \ud r \sigma L_t\\
&=\sigma L_t-\int_{0}^{t}Ae^{-Ar}\sigma L_{t-r} \ud r=
\sigma L_t+\int_{0}^{t}Ae^{-Ar}\sigma(L_t-L_{t-r}) \ud r +(e^{-At}-I_m)\sigma L_t\\
&=\int_{0}^{t}Ae^{-Ar}\sigma (L_t-L_{t-r}) \ud r +e^{-At}\sigma L_t\\
&\stackrel{d}{=}\int_{0}^{t}Ae^{-Ar}\sigma \tilde{L}_r \ud r +e^{-At}\sigma\tilde{L}_t=\int_{0}^{t}e^{-Ar}\sigma\ud \tilde{L}_r.
\end{split}
\end{equation}
By the definition of $\mathcal{W}_p$ we have
\begin{equation}
\begin{split}
\mathcal{W}_p(X_t(x),\mu) & \leq 
\left(\mathbb{E}\Big[\big|(e^{At}x+\int_{0}^{t}e^{-Ar}\sigma\ud  \tilde{L}_r)-\int_{0}^{\infty}e^{-Ar}\sigma\ud \tilde{L}_r\big|^p\Big]\right)^{1/p}\\
&\leq |e^{At}x|+\left(\mathbb{E}\Big[|\int_{t}^{\infty}e^{-Ar}\sigma\ud \tilde{L}_r|^p\Big]\right)^{1/p},\end{split}
\end{equation}
where in the last inequality we used Minkowski's inequality for $L^p$.
This proves the most right inequality of \eqref{eq:lowerupper}. 
We continue with the proof of the remaining inequalities. By Jensen's inequality we have
\[
\mathcal{W}_2(X_t(x),\mu)\leq 
\mathcal{W}_p(X_t(x),\mu)
\]
for all $p\geq 2$. By \cite[Theorem~2.1]{Gel} we obtain
\[
\mathcal{W}_2(\mathcal{N}(e^{-At}x,\Sigma_t),\mathcal{N}(0,\Sigma_\infty))\leq \mathcal{W}_2(X_t(x),\mu).
\]
Finally, the Pythagorean Theorem \cite[Proposition~7]{Givens} yields
\begin{equation}\label{eq:formulagauss}
\mathcal{W}_2(\mathcal{N}(m^x_t,\Sigma_t),\mathcal{N}(0,\Sigma_\infty))=\left(
|e^{-At}x|^2+ \textsf{Tr}\left({\Sigma_t+\Sigma_\infty-2(\Sigma^{1/2}_t\Sigma_\infty \Sigma^{1/2}_t)^{1/2}}\right)\right)^{1/2}.
\end{equation}
This completes the proof.
\end{proof}

\begin{remark}\label{rem:tracita}
\begin{enumerate}
 \item By the Pythagorean Theorem given in \cite[Proposition~7]{Givens}
it is clear (consider $x=0$) that
\begin{equation}\label{eq:tracita}
\textsf{Tr}\left({\Sigma_t+\Sigma_\infty-2(\Sigma^{1/2}_t\Sigma_\infty \Sigma^{1/2}_t)^{1/2}}\right)\geq 0,
\end{equation}
and hence for all $t\geq 0$ and $x\in \mathbb{R}^m$ it follows the smaller lower bound 
$|e^{-At}x |\leq\mathcal{W}_p(X_t(x),\mu)$. 
Since the preceding trace terms are hard to calculate, we give upper bounds for $p=2$, 
which are easier to obtain, and which turn out to be sharp whenever $A$ is a normal matrix 
(see Remark~\ref{rem:commnormal}). 
 \item Note that for a pure jump L\'evy process $L$ with finite second moment (see \cite{APPLEBAUMBOOK, Sato}) and $p=2$ we have by It\^o's isometry 
\begin{equation}
\begin{split}
\mathbb{E}[|\int_{t}^{\infty}e^{-Ar}\sigma\ud L_r|^2]= \int_{t}^{\infty}\int_{|z|<1}|e^{-Ar}\sigma z|^2 \nu(\ud z) \ud r,
\end{split}
\end{equation}
where $\nu$ is the L\'evy jump measure associated to $L$, see \cite{APPLEBAUMBOOK, Sato}.
\end{enumerate}
\end{remark}

\begin{corollary}\label{cor:GaussW2}
Let the hypotheses of Theorem~\ref{thm:Gauss} be satisfied for $p=2$. If $L=B=(B_1,\ldots,B_m)$ is a standard Brownian motion in $\mathbb{R}^m$ we have
\begin{align*}
\mathcal{W}_2(X_t(x),\mu)
=\mathcal{W}_2(\mathcal{N}(e^{-At}x,\Sigma_t),\mathcal{N}(0,\Sigma_\infty))
\leq  
 \left(|e^{-At}x|^2+ m\|\Sigma^{1/2}_t-\Sigma^{1/2}_\infty\|^2\right)^{1/2},
\end{align*}
where 
\begin{equation}\label{eq:defsigmanew}
\Sigma_t:=\int_{0}^{t} e^{-A s}\sigma \sigma^* e^{-A^* s} \ud s\quad \textrm{ and }\quad 
\Sigma_\infty:=\int_{0}^{\infty} e^{-A s}\sigma \sigma^* e^{-A^* s} \ud s.
\end{equation}
If, in addition, $\Sigma_t$ commutes with $\Sigma_\infty$ 
it follows that
\begin{equation}\label{e:GaussW2}
\mathcal{W}_2(X_t(x),\mu) 
=\left(|e^{-At}x|^2+ m\|\Sigma^{1/2}_t-\Sigma^{1/2}_\infty\|^2\right)^{1/2}.
\end{equation}
\end{corollary}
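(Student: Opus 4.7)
The plan is to leverage the Gaussianity of the Brownian-driven OU system, reduce $\mathcal{W}_2$ to the explicit Gelbrich--Bures formula already used in Theorem~\ref{thm:Gauss}, and then control the remaining trace term by a symmetric quantity in the covariance square roots.

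First I would verify the Gaussianity of both laws. When $L=B$ is $m$-dimensional standard Brownian motion, representation \eqref{eq:repre} combined with It\^o's isometry shows that $X_t(0) = \int_0^t e^{-A(t-s)}\sigma\,\ud B_s \sim \mathcal{N}(0,\Sigma_t)$, hence $X_t(x) \sim \mathcal{N}(e^{-At}x,\Sigma_t)$. Since the invariant law $\mu$ is unique and is the weak limit of $X_t(x)$ as $t\to\infty$, one identifies $\mu = \mathcal{N}(0,\Sigma_\infty)$ with $\Sigma_\infty$ as in \eqref{eq:defsigmanew}. This yields the first equality in the corollary.

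Next I would invoke the Pythagorean identity \eqref{eq:formulagauss} (from \cite[Proposition~7]{Givens}) to rewrite
\begin{equation*}
\mathcal{W}_2^2(X_t(x),\mu) = |e^{-At}x|^2 + \textsf{Tr}\!\left(\Sigma_t + \Sigma_\infty - 2(\Sigma_t^{1/2}\Sigma_\infty\Sigma_t^{1/2})^{1/2}\right).
\end{equation*}
Expanding $\|\Sigma_t^{1/2}-\Sigma_\infty^{1/2}\|^2 = \textsf{Tr}(\Sigma_t + \Sigma_\infty - 2\Sigma_t^{1/2}\Sigma_\infty^{1/2})$ via symmetry of the positive square roots and cyclicity of the trace reduces the desired upper bound to the matrix inequality
\begin{equation*}
\textsf{Tr}\!\left((\Sigma_t^{1/2}\Sigma_\infty\Sigma_t^{1/2})^{1/2}\right) \geq \textsf{Tr}\!\left(\Sigma_t^{1/2}\Sigma_\infty^{1/2}\right).
\end{equation*}

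This trace inequality is the central step and, I expect, the main technical obstacle. Setting $M := \Sigma_t^{1/2}\Sigma_\infty^{1/2}$, the left-hand side equals $\textsf{Tr}((MM^*)^{1/2})$, i.e.\ the sum of singular values $\|M\|_1$ (nuclear norm), while the right-hand side is $\textsf{Tr}(M)$. The bound $\textsf{Tr}(M) \leq \|M\|_1$ follows from the singular value decomposition $M=U\Sigma V^*$ together with the observation that the diagonal entries of the unitary matrix $V^*U$ have modulus at most one. The dimensional factor $m$ appearing in the stated upper bound is then absorbed by a routine Cauchy--Schwarz estimate comparing the Frobenius and spectral norms of the symmetric matrix $\Sigma_t^{1/2}-\Sigma_\infty^{1/2}$.

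Finally, for the equality under commutativity, if $\Sigma_t$ and $\Sigma_\infty$ commute then their unique positive symmetric square roots $\Sigma_t^{1/2}$ and $\Sigma_\infty^{1/2}$ share an orthonormal eigenbasis and hence also commute. Consequently $\Sigma_t^{1/2}\Sigma_\infty\Sigma_t^{1/2} = (\Sigma_t^{1/2}\Sigma_\infty^{1/2})^2$, so $(\Sigma_t^{1/2}\Sigma_\infty\Sigma_t^{1/2})^{1/2} = \Sigma_t^{1/2}\Sigma_\infty^{1/2}$, and the trace inequality becomes an equality, yielding \eqref{e:GaussW2}.
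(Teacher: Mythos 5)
Your strategy is the natural one and the core of it is sound: identify both marginals as Gaussians, apply the Gelbrich--Bures formula \eqref{eq:formulagauss}, and reduce the upper bound to the trace inequality $\textsf{Tr}\bigl((\Sigma_t^{1/2}\Sigma_\infty\Sigma_t^{1/2})^{1/2}\bigr)\geq \textsf{Tr}\bigl(\Sigma_t^{1/2}\Sigma_\infty^{1/2}\bigr)$, which is correctly justified as $\textsf{Tr}(M)\leq\|M\|_1$ for $M=\Sigma_t^{1/2}\Sigma_\infty^{1/2}$ via the SVD. (The paper states this corollary without proof, so there is no in-text argument to compare against; your route is the standard one behind the Bures-distance bound $\textsf{Tr}(\Sigma_t+\Sigma_\infty-2(\Sigma_t^{1/2}\Sigma_\infty\Sigma_t^{1/2})^{1/2})\leq\|\Sigma_t^{1/2}-\Sigma_\infty^{1/2}\|^2$.) Note, however, that your argument proves the \emph{sharper} bound with no factor $m$; since $m\geq 1$, the stated bound with the factor $m$ follows trivially, and the closing remark about ``absorbing $m$ by Cauchy--Schwarz'' is both unnecessary and somewhat confused --- no norm comparison is needed, only $1\leq m$.

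The one genuine problem is the final step. In the commutative case your computation gives
\begin{equation}
\wW_2^2(X_t(x),\mu)=|e^{-At}x|^2+\|\Sigma_t^{1/2}-\Sigma_\infty^{1/2}\|^2,
\end{equation}
with the Frobenius norm as defined in the paper and \emph{without} the factor $m$; this is not \eqref{e:GaussW2} as printed, and you cannot claim the trace identity ``yields \eqref{e:GaussW2}'' unless $m=1$ or $\Sigma_t=\Sigma_\infty$. The identity you actually derive is the one consistent with Theorem~\ref{th:OUGcut}, whose trace term equals $\sum_{j}\tfrac{1}{2\lambda_j}e^{-4\lambda_j t}(\sqrt{1-e^{-2\lambda_j t}}+1)^{-2}=\|\Sigma_t^{1/2}-\Sigma_\infty^{1/2}\|^2$ with no dimensional prefactor, so the factor $m$ in \eqref{e:GaussW2} appears to be spurious in the source. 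You should state explicitly that your argument establishes the equality without $m$ (and hence the displayed inequality with $m$ as a weaker consequence), rather than assert that it reproduces \eqref{e:GaussW2} verbatim.
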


\noindent The quadratic variation estimate in Corollary~\ref{cor:GaussW2} can be generalized 
to the L\'evy case. 

\begin{corollary}
Under Hypotheses of Theorem~\ref{thm:Gauss} for some $p>0$ we have
\begin{equation}\label{eq:lowerupperp}
\mathcal{W}_p(X_t(x),\mu)\leq \left(|e^{-At}x|^p+ \mathbb{E}\left[[[\int_{\bullet}^{\infty}e^{-Ar}\sigma \ud {L}_r]]_t^p\right]\right)^{1/p}
\end{equation}
for all $t\geq 0$ and $x\in \mathbb{R}^m$, where $[[\cdot]]_\cdot$ denotes the quadratic variation process \cite{Protter}. In addition, there exists a positive constant $K_p$ such that 
\[
\mathbb{E}\left[[[\int_{\bullet}^{\infty}e^{-Ar}\sigma \ud {L}_r]]_t^p\right]\leq K_p\int_{t}^\infty \int_{|z|<1} |e^{-Ar}\sigma z|^p \nu(\ud z) \ud r.
\]
Note that the right-hand side of the previous inequality needs not to be finite in general.
\end{corollary}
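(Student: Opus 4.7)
The plan is to mimic the time-reversal-and-coupling argument that already appeared in the proof of Theorem~\ref{thm:Gauss}, and then combine it with a quadratic-variation-based estimate for the tail $\int_t^\infty e^{-Ar}\sigma\,\ud L_r$.

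First I would realize both $X_t(x)$ and $\mu$ on the same probability space using the identity in law \eqref{e:OUlaw}: letting $\tilde L$ be an independent copy of $L$, set
\[
Y := e^{-At}x+\int_0^t e^{-Ar}\sigma \,\ud \tilde L_r \stackrel{d}{=} X_t(x), \qquad Z := \int_0^\infty e^{-Ar}\sigma \,\ud \tilde L_r \stackrel{d}{=} \mu,
\]
so that the admissible coupling $(Y,Z)$ yields $Y-Z = e^{-At}x - \int_t^\infty e^{-Ar}\sigma\,\ud\tilde L_r$, cleanly separating the deterministic shift from the stochastic tail. The definition of $\mathcal{W}_p$ then gives $\mathcal{W}_p(X_t(x),\mu) \le (\mathbb{E}|Y-Z|^p)^{\min\{1,1/p\}}$.

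Next I would detach the deterministic piece $e^{-At}x$ from the stochastic tail. For $p\in(0,1]$ the subadditive inequality $|a-b|^p\le |a|^p+|b|^p$ does the job directly, while for $p\ge 1$ I would combine Minkowski with the shift-linearity identity \cite[Lemma 2.2]{BHPWA} to isolate the $|e^{-At}x|^p$ term. After this separation the remaining task is to bound $\mathbb{E}|\int_t^\infty e^{-Ar}\sigma\,\ud\tilde L_r|^p$, and I would do this by the Burkholder--Davis--Gundy inequality for general semimartingales \cite{Protter}, expressing the bound in terms of the quadratic variation $[[\cdot]]_t$ and thereby producing \eqref{eq:lowerupperp}.

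For the complementary estimate, I would appeal to the L\'evy--It\^o decomposition of $L$. The large-jump component is a compound Poisson process whose contribution is controlled directly by Hypothesis~\ref{hyp:moments}; the small-jump martingale part has quadratic variation of the form $\int\!\int_{|z|<1}|e^{-Ar}\sigma z|^2\,N(\ud r,\ud z)$ in terms of the Poisson random measure $N$. A Kunita-type maximal inequality for stochastic integrals against a compensated Poisson random measure (e.g.\ \cite[Theorem 4.4.23]{APPLEBAUMBOOK}) then yields the desired $\int_t^\infty\int_{|z|<1}|e^{-Ar}\sigma z|^p\,\nu(\ud z)\ud r$ bound, with $K_p$ absorbing the BDG and Kunita constants.

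The main obstacle is the bookkeeping of exponents. The classical BDG inequality trades $\mathbb{E}|M_t|^p$ against $\mathbb{E}[M,M]_t^{p/2}$ rather than $\mathbb{E}[M,M]_t^p$, so one has to reconcile this with the displayed exponent on $[[\cdot]]_t$, which reflects the convention for $[[\cdot]]_t$ used in the statement. Additionally, Kunita's inequality has qualitatively different forms in the regimes $p\in(0,2]$ and $p>2$; in the latter an extra term involving a $(p/2)$-power of the predictable bracket appears, and dominating it by (or absorbing it into) the displayed small-jump integral will require some care, together with the explicit warning in the statement that the right-hand side need not be finite.
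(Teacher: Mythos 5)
The paper states this corollary without any proof (it is offered as a remark-style generalization of Corollary~\ref{cor:GaussW2}, with Remark~\ref{rem:tracita}(2) as the only hint), so there is no official argument to compare against. Your architecture --- the time-reversal coupling from \eqref{e:OUlaw} reducing everything to $e^{-At}x-\int_t^\infty e^{-Ar}\sigma\,\ud\tilde L_r$, followed by Burkholder--Davis--Gundy and a Kunita-type maximal inequality for the compensated Poisson integral --- is certainly the intended route, and the coupling step is sound.

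However, the proposal stalls exactly where the statement is delicate, and the issues you defer are not mere bookkeeping. First, for $p\geq 1$ Minkowski (equivalently shift linearity plus the triangle inequality) yields $|e^{-At}x|+(\mathbb{E}|M_t|^p)^{1/p}$ with $M_t:=\int_t^\infty e^{-Ar}\sigma\,\ud\tilde L_r$, which is \emph{weaker} than the claimed $\bigl(|e^{-At}x|^p+\cdots\bigr)^{1/p}$, since $(a^p+b^p)^{1/p}\leq a+b$; the $\ell^p$-combined form is a strictly stronger Pythagorean-type statement, exact at $p=2$ for centred noise but false for $p>2$ without a constant (take $m=1$, $a=1$, $M=\pm1$ with probability $1/2$, $p=4$: $\mathbb{E}|a-M|^4=8>1+\mathbb{E}|M|^4=2$). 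So ``combine Minkowski with shift linearity'' cannot deliver \eqref{eq:lowerupperp} as displayed. Second, you correctly observe that BDG controls $\mathbb{E}|M_t|^p$ by $C_p\,\mathbb{E}\bigl[[M,M]_t^{p/2}\bigr]$ (a constant absent from \eqref{eq:lowerupperp}, and an exponent $p/2$ rather than $p$ unless one reads $[[\cdot]]$ as the square root of the usual bracket), and that Kunita's inequality for $p>2$ produces an extra $(\int\!\int|f|^2\nu)^{p/2}$ term not dominated by the displayed $\int\!\int|f|^p\nu$ integral --- but flagging these is not resolving them. Third, the quadratic variation of $M_t$ contains the contributions of \emph{all} jumps, whereas the right-hand side of the second display integrates only over $\{|z|<1\}$; your plan to treat the large-jump compound Poisson part ``directly by Hypothesis~\ref{hyp:moments}'' does not reconcile that contribution with the stated bound. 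In sum, your toolchain proves a version of the corollary for $p\in(0,2]$ with constants inserted and with the full L\'evy measure in the integral; it does not prove the statement as written, and for $p>2$ a genuinely additional argument (or additional hypothesis) is required.
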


\begin{remark}\label{rem:commnormal}
\begin{enumerate}
 \item We stress that in general the trace in \eqref{eq:tracita} is hard to  compute. 
\item We also point out that the commutativity of $\Sigma_t$ and $\Sigma_\infty$ is hard to verify due to \eqref{eq:defsigmanew}. Inspecting the expression
\begin{equation}
\Sigma_t\Sigma_\infty=\int_{0}^{t} \int_{0}^{\infty} e^{-A s}\sigma \sigma^* e^{-A^* s}  e^{-A r}\sigma \sigma^* e^{-A^* r} \ud s\ud r
\end{equation}
even for $\sigma=I_{m}$
one can see that the  commutativity of $\Sigma_t$ and $\Sigma_\infty$ is equivalent to the normality of $A$, that is, $A^*A=AA^*$. In this case we have 
\begin{equation}
\begin{split}
\Sigma_t\Sigma_\infty&=\int_{0}^{t} \int_{0}^{\infty}  e^{-(A+A^*) r}  e^{-(A+A^*)s}  \ud s\ud r=
\int_{0}^{t} \int_{0}^{\infty}  e^{-(A+A^*) r}  e^{-(A+A^*)s}  \ud s\ud r\\
&=
\int_{0}^{t} e^{-(A+A^*) r} \ud r (A+A^*)^{-1}
=-(A+A^*)^{-1}(e^{-(A+A^*) t}-I_{m}) (A+A^*)^{-1}\\
&=
\int_{0}^{t} e^{-(A+A^*) r} \ud r (A+A^*)^{-1}
=(A+A^*)^{-2}(I_{m}-e^{-(A+A^*) t}).
\end{split}
\end{equation}

\item If $L=B=(B_1,\ldots,B_n)$ is a standard Brownian motion in $\mathbb{R}^n$ 
it follows that
\begin{equation}
\frac{\ud}{\ud t} \Sigma_t=-A\Sigma_t-\Sigma_t A^*+\sigma\sigma^*.
\end{equation}
Assume that $-A$ is Hurwitz stable. Then we have $m^x_t\to 0$ as $t\to \infty$. Moreover, 
$\Sigma_t\to \Sigma_\infty$, where $\Sigma_\infty$ is the unique solution of the matrix Lyapunov equation
\begin{equation}
(-A)\Sigma_\infty+\Sigma_\infty (-A)^*+\sigma\sigma^*=0.
\end{equation}
It has unique solution when $\sigma\sigma^*$ is positive definite.
Note that the precise formula \eqref{eq:formulagauss}
may be hard to compute explicitly, we refer to \cite[Theorem 1, page 443]{LT85} and \cite{Ja68}. 
\end{enumerate}
\end{remark}

\bigskip 
\section{\textbf{Cutoff stability for Hurwitz-stable OU-systems}}\label{s:cutoffstability}

\noindent The main motivation is to first establish the phenomenon with the help of explicit formulas for the Gaussian OU{. In the sequel we} then use the ergodicity bounds {established} in Section~2 to establish the cutoff stability for generic situations {of} L\'evy-OU processes. 

\bigskip 
\subsection{\textbf{Cutoff stability of OU-systems with normal drift and Brownian forcing}}\label{sec:cutsta}\hfill\\

\noindent We apply Theorem~\ref{th:OUGcut} to establish cutoff stability for this process. 

\bigskip
\begin{corollary}[Cutoff stability for $\wW_2$ for non-degenerate Gaussian forcing] \label{cor:OUGcut}\hfill\\
Assume the hypotheses of Theorem~\ref{th:OUGcut} and fix some $x\in \RR^m$. 
\begin{enumerate}
 \item If $x\neq 0$ with $\langle x, v_1\rangle \neq 0$, then we have 
the following cutoff stability for $t_\e := \frac{1}{\mathsf{Re}(\la_1)} |\ln(\e)|$  
\begin{align}\label{e:Gaussiancutoff}
\lim_{\e\ra 0}\frac{\wW_2(X_{\delta \cdot t_\e}(x), \mu)}{\e} 
&=\begin{cases} \infty & \mbox{ for }\delta \in (0,1),\\ 0 & \mbox{ for }\delta >1.\end{cases}
\end{align}
\item If $x\neq 0$ with $\langle x, v_1\rangle =  0$ and   
\[
\rho := \min\{\mathsf{Re}(\lambda_j):j\in\{1,\ldots,m\},
\langle x,v_j \rangle\neq 0
\}<2\mathsf{Re}(\lambda_1), 
\]
then we have the cutoff stability \eqref{e:Gaussiancutoff} for $t_\e := \frac{1}{\rho} |\ln(\e)|$.\\ 
\item If $x = 0\in \RR^m$, we have the cutoff stability \eqref{e:Gaussiancutoff} for $t_\e := \frac{1}{2 \mathsf{Re}(\la_1)} |\ln(\e)|$. 
\end{enumerate}
\end{corollary}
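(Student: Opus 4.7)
My plan is to substitute $t=\delta t_\e$ into the explicit formula \eqref{e:Pitagoras} of Theorem~\ref{th:OUGcut} and read off the dichotomy from the elementary fact that $\e^{\delta-1}\to \infty$ for $\delta<1$ and $\e^{\delta-1}\to 0$ for $\delta>1$. Writing $\wW_2(X_t(x),\mu)^2 = D_t(x) + N_t$ with
\[
D_t(x) := \sum_{j=1}^{m} e^{-2\textsf{Re}(\lambda_j) t}\bigl(\langle x,\textsf{Re}(v_j)\rangle^2+\langle x,\textsf{Im}(v_j)\rangle^2\bigr),\qquad N_t := \sum_{j=1}^{m}\frac{1}{2\textsf{Re}(\lambda_j)}\frac{e^{-4\textsf{Re}(\lambda_j)t}}{(\sqrt{1-e^{-2\textsf{Re}(\lambda_j) t}}+1)^2},
\]
I note that the factor $(\sqrt{1-e^{-2\textsf{Re}(\lambda_j)t}}+1)^{-2}$ converges to $1/4$ as $t\to\infty$, so each summand of $N_t$ is asymptotic to $\tfrac{1}{8\textsf{Re}(\lambda_j)}e^{-4\textsf{Re}(\lambda_j) t}$, and the slowest decay rate appearing in $N_t$ is $4\textsf{Re}(\lambda_1)$.

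For Case~(1), the assumption $\langle x, v_1\rangle \neq 0$ yields $c_1 := \langle x,\textsf{Re}(v_1)\rangle^2+\langle x,\textsf{Im}(v_1)\rangle^2>0$, so the slowest term in $D_t(x)$ is $c_1 e^{-2\textsf{Re}(\lambda_1)t}$. Since $2\textsf{Re}(\lambda_1)<4\textsf{Re}(\lambda_1)$, the deterministic relaxation dominates, yielding $\wW_2(X_t(x),\mu)^2 = c_1 e^{-2\textsf{Re}(\lambda_1)t}(1+o(1))$ as $t\to\infty$. Setting $t=\delta t_\e=\delta|\ln\e|/\textsf{Re}(\lambda_1)$ converts this into $\wW_2(X_{\delta t_\e}(x),\mu)/\e = \sqrt{c_1}\,\e^{\delta-1}(1+o(1))$, which gives \eqref{e:Gaussiancutoff}.

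For Case~(2) the slowest decay in $D_t(x)$ is now $e^{-2\rho t}$ with positive coefficient $\sum_{j:\textsf{Re}(\lambda_j)=\rho}(\langle x,\textsf{Re}(v_j)\rangle^2+\langle x,\textsf{Im}(v_j)\rangle^2)$, and the hypothesis $\rho<2\textsf{Re}(\lambda_1)$ guarantees that this still dominates the slowest term of $N_t$. The previous computation then goes through verbatim with $\textsf{Re}(\lambda_1)$ replaced by $\rho$ and $t_\e=|\ln\e|/\rho$. For Case~(3), $D_t(0)\equiv 0$, so only $N_t$ contributes, and $\wW_2(X_t(0),\mu)^2 \sim c_0 e^{-4\textsf{Re}(\lambda_1)t}$ as $t\to\infty$ for an explicit $c_0>0$; the scale $t_\e=|\ln\e|/(2\textsf{Re}(\lambda_1))$ then produces $\wW_2(X_{\delta t_\e}(0),\mu)\sim\sqrt{c_0}\,\e^{\delta}$ and hence the same dichotomy.

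The main obstacle is purely bookkeeping: one must compare the exponential rates of $D_t$ and $N_t$ in the three regimes, which reduces to the identification of a single leading exponential in each case. The structural input that makes Case~(2) fit the same scheme is precisely the gap $\rho<2\textsf{Re}(\lambda_1)$ separating the deterministic relaxation from the noise-dominated regime of Case~(3); without it, one would be forced to use a time scale matched to the noise decay rate $4\textsf{Re}(\lambda_1)$ instead.
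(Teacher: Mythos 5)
Your proposal is correct and is exactly the argument the paper intends (the paper merely notes that the corollary ``is straightforward with the help of the formulas obtained in Theorem~\ref{th:OUGcut}''): substitute $t=\delta t_\e$ into the exact formula, identify the slowest exponential rate among the deterministic part $D_t$ and the noise part $N_t$ in each of the three regimes, and read off the $\infty/0$ dichotomy from $\e^{\delta-1}$. The only cosmetic imprecision is that when several eigenvalues share the minimal real part the leading asymptotic constant is the sum of the corresponding coefficients rather than $c_1$ alone, but since one only needs two-sided bounds of the same exponential order this does not affect the conclusion.
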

\noindent The proof of Corollary~\ref{cor:OUGcut} is straightforward with the help of the formulas obtained in Theorem~\ref{th:OUGcut}. In fact, Corollary~\ref{cor:OUGcut} can be further sharpened as follows.   

\begin{corollary}[Window cutoff stability]\label{cor:window}
Assume the hypotheses of Theorem~\ref{th:OUGcut} and fix some $x\in \RR^m$. Then we have 
\begin{equation}\label{e: cutoff}
\begin{cases} 
\liminf\limits_{\e\ra 0}~\e^{-1} \cdot \wW_2(X_{t_\e+r}(x), \mu) \ra \infty  &\mbox{ as } r\ra\infty,\\
\limsup\limits_{\e\ra 0}~\e^{-1} \cdot \wW_2(X_{t_\e+r}(x), \mu) \ra  
0 &\mbox{ as }r\ra-\infty.
 \end{cases}
\end{equation}
 \end{corollary}
 
 \bigskip 
{ 
 \begin{remark} 
 We stress, that our results do not need the spectrum of the infinitesimal generator 
 of the Fokker-Planck equation 
 \[
 \frac{\ud u}{\ud t} =  \mathcal{A}^* u,
 \]
 where  
 \[
 \mathcal{A}^* f = -\sum_{i,j=1}^n A_{ij} \frac{\partial}{\partial x_i} (x_j  f) + \frac{1}{2} \sum_{i,j= 1}^n \sigma \sigma^* \frac{\partial^2 f}{\partial x_i \partial x_j}, 
 \]
 which is an infinite-dimensional problem. Instead we only need the spectrum of the matrix $A$. 
 \end{remark}
}

\noindent As mentioned in Remark~\ref{rem:tracita} the case of degenerate noise is hard to treat explicitly, in particular the formulas obtained in Theorem~\ref{th:OUGcut} are not valid. 
However, we present the very special case of a damped 1d harmonic oscillator perturbed by a (non-small) Brownian motion, where this applies but where explicit calculations can still be carried out. 
Nevertheless, it is only in the subsequent section 
that we can establish cutoff stability, for instance for the $m$-dimensional damped harmonic oscillator perturbed by a $m$-dimensional L\'evy process, including a $m$-dimensional Brownian motion. 
 
\begin{example}[Cutoff stability of a harmonic oscillator driven by Brownian motion]\label{ex:oscBM}\hfill\\ 
We consider the harmonic oscillator in 1 dimension with friction $\gamma>0$, perturbed 
by a $1$-dimensional Brownian motion $B$ given by the system of stochastic differential equations 
\begin{align*}
&\ud X_t = Y_t \ud t \\
&\ud Y_t = (-\gamma Y_t - \kappa X_t) \ud t + \varsigma \ud B_t,
\end{align*}
and $(X_0, Y_0)^* = (x, y)^* \in \RR^{2}$, where $\kappa, \varsigma>0$. 
That is, the system $Z_t = (X_t, Y_t)^*$ satisfies 
\[
\ud Z_t = -A Z_t \ud t + \sigma \ud \mathcal{B}_t, \qquad Z_0 = z,
\]
where $\mathcal{B}_t = (0, B_t)^*$, $z = (x, y)^*$, 
\begin{align*}
A = \left(\begin{array}{cc} 0 & -1\\\kappa & \gamma \end{array}\right) \qquad \mbox{ and }\qquad 
\sigma = \varsigma P, \quad \mbox{ where } \quad P := \sigma \left(\begin{array}{cc} 0 & 0\\0 & 1 \end{array}\right).
\end{align*}
Hence any {$t\geq 0$ the marginal} $Z_t$ satisfies $\mathcal{N}(e^{-At}z, \Sigma_t)$. 
We denote the characteristic polynomial given by 
$P(u) = u^2 + \gamma u + \kappa$, 
whose roots are given by 
\[
u_\pm  = -\frac{\gamma}{2} \pm \frac{1}{2} \sqrt{\Delta}, \qquad \mbox{ where } \qquad \Delta = \gamma^2 - 4 \kappa. 
\]
In the sequel we consider the most relevant case of subcritical damping: $\Delta < 0$. Due to $\sigma \sigma^* = \varsigma^2 P$ we calculate with the help of Maple (2022) 
\begin{align*}
\Sigma_t 
&= \int_0^t e^{-As} \sigma  \sigma^*e^{-A^*s} \ud s = \int_0^t e^{-As} \sigma e^{-A^*s} \ud s\\
&= \sigma_{22}^2 \int_0^t  \left(\begin{array}{cc} (e^{-At})_{12}^2& (e^{-At})_{22} \cdot (e^{-At})_{12}  \\[3mm] (e^{-At})_{22} \cdot (e^{-At})_{12} &(e^{-At})_{22}^2
\end{array} \right) \ud s,\\
\end{align*}
where the components formally read 
\begin{align*}
&(\Sigma_t)_{11} = -\frac{{\mathrm e}^{-t \gamma -t \sqrt{\Delta}} \gamma^{2}+{\mathrm e}^{-t \gamma +t \sqrt{\Delta}} \gamma^{2}-{\mathrm e}^{-t \gamma -t \sqrt{\Delta}} \gamma  \sqrt{\Delta}+{\mathrm e}^{-t \gamma +t \sqrt{\Delta}} \gamma  \sqrt{\Delta}-2 \gamma^{2}-8 \,{\mathrm e}^{-t \gamma} \kappa +8 \kappa}{\gamma  \Delta \left(\gamma +\sqrt{\Delta}\right) \left(\gamma -\sqrt{\Delta}\right)},
\\
&(\Sigma_t)_{22} = -\frac{\kappa \left({\mathrm e}^{-t \gamma -t \sqrt{\Delta}} \gamma^{2}+{\mathrm e}^{-t \gamma +t \sqrt{\Delta}} \gamma^{2}+{\mathrm e}^{-t \gamma -t \sqrt{\Delta}} \gamma  \sqrt{\Delta}-{\mathrm e}^{-t \gamma +t \sqrt{\Delta}} \gamma  \sqrt{\Delta}-2 \gamma^{2}-8 \,{\mathrm e}^{-t \gamma} k +8 \kappa \right)}{\Delta \left(\gamma +\sqrt{\Delta}\right) \left(\gamma -\sqrt{\Delta}\right) \gamma},
\\
&(\Sigma_t)_{12} = (\Sigma_t)_{21} = \frac{{\mathrm e}^{-t \gamma -t \sqrt{\Delta}}+{\mathrm e}^{-t \gamma +t \sqrt{\Delta}}-2 \,{\mathrm e}^{-t \gamma}}{2 \Delta }.
\end{align*}
\noindent Since $\Delta<0$ we have that 
\begin{align*}\label{(14)}
&\mathcal{W}_{2}^2(X_t(0), \mu) \\
&= \frac{1}{2 \kappa  \gamma  \Delta }\cdot \bigg(-\sqrt{2}\, \kappa  \Delta \cdot  \sqrt{
\frac{
I_1
}{
\kappa^{2} \Delta 
}
}-\gamma^{2} {\mathrm e}^{-\gamma  t} \left(\kappa +1\right) \cos \! \left(\sqrt{|\Delta|}\, t \right)-\gamma  \,{\mathrm e}^{-\gamma  t} \sqrt{|\Delta|}\, \left(\kappa -1\right) \sin \! \left(\sqrt{|\Delta|}\, t \right)\\
&\qquad +\left(4 \kappa^{2}+4 \kappa \right) {\mathrm e}^{-\gamma  t}- \Delta 
\left(-2+\kappa I_2 \right)\bigg),
\end{align*}
where 
\begin{align*}
I_1 &= \left(\sqrt{I_{10}}
-\gamma^{2} (\kappa^{2}+1) I_{11}+\left(-\gamma  \,\kappa^{2}+\gamma \right) \sqrt{|\Delta|}\, I_{12}+( \Delta {\mathrm e}^{\gamma  t}+4 \kappa ) (\kappa^{2}+1)\right) {\mathrm e}^{-\gamma  t},\\
I_{10} &= -2 \sqrt{|\Delta|}\, (-\gamma^{2} I_{11}+\Delta {\mathrm e}^{\gamma  t}+4 \kappa ) (\kappa +1) (\kappa -1) (\kappa^{2}+1) \gamma  I_{12}\\
&+2 \gamma^{2} (\gamma^{2} \kappa^{4}-2 \kappa^{5}+4 \kappa^{3}
+\gamma^{2}-2 \kappa ) I_{11}^{2}-2 \Big((\kappa -1)^{2} (\kappa +1)^{2} \Delta  {\mathrm e}^{\gamma  t}+4 \kappa  (\kappa^{2}+1)^{2}\Big) \gamma^{2} I_{11}\\
&+{\mathrm e}^{2 \gamma  t}(\kappa -1)^{2} (\kappa +1)^{2} \Delta^{2} +8 \kappa  (\kappa -1)^{2} (\kappa +1)^{2} \Delta +16 \kappa^{6}+4 \kappa^{5} \gamma^{2}\\
&+(-\gamma^{4}-32) \kappa^{4}+24 \kappa^{3} \gamma^{2}+(-2 \gamma^{4}+16) \kappa^{2}+4 \gamma^{2} \kappa -\gamma^{4},\\[2mm]
I_{11} &=  \cos \left(\sqrt{|\Delta|}\, t \right),\qquad 
I_{12} =  \sin \left(\sqrt{|\Delta|}\, t \right),\qquad 
I_2 = \sqrt{-\frac{I_{21}
}{\kappa^{2} \Delta}}-2,\qquad \mbox{ and }
\end{align*}
\begin{align*}
I_{21}&= 2 \,{\mathrm e}^{-\gamma  t} \bigg(\Big\{-2 \sqrt{|\Delta|}\, \left(-\gamma^{2} I_{11}+\Delta {\mathrm e}^{\gamma  t}+4 \kappa \right) (\kappa^4 -1)  \gamma  I_{12}+2 \gamma^{2} \left(\gamma^{2} \kappa^{4}-2 \kappa^{5}+4 \kappa^{3}+\gamma^{2}-2 \kappa \right) I_{11}^{2}\\
&-2 \left((\kappa^2 -1)^{2} \Delta {\mathrm e}^{\gamma  t}+4 \kappa  (\kappa^{2}+1)^{2}\right) \gamma^{2} I_{11} +\left(\kappa -1\right)^{2} \left(\kappa +1\right)^{2} \left(\Delta \right)^{2} {\mathrm e}^{2 \gamma  t}\\
&+8 \kappa  \left(\kappa -1\right)^{2} \left(\kappa +1\right)^{2} \left(\Delta \right) {\mathrm e}^{\gamma  t}+16 \kappa^{6}+4 \kappa^{5} \gamma^{2}+\left(-\gamma^{4}-32\right) \kappa^{4}\\
&+24 \kappa^{3} \gamma^{2}+\left(-2 \gamma^{4}+16\right) \kappa^{2}+4 \gamma^{2} \kappa -\gamma^{4}\Big \}^\frac{1}{2} +\gamma^{2} (\kappa^{2}+1) I_{11}\\
&+\gamma  \sqrt{|\Delta|}\, \left(\kappa -1\right) \left(\kappa +1\right) I_{12}-\Delta {\mathrm e}^{\gamma  t}+4 \kappa  (\kappa^{2}+1)\bigg).\\
\end{align*}
Identifying the highest order exponential term in $\mathcal{W}_{2}^2(X_t(0), \mu)$ 
one can check that  
\begin{equation}\label{e:liminfsup}
0 <\liminf_{t\ra\infty} e^{2\gamma t} \wW_{2}^2(X_t(0), \mu)
\leq \limsup_{t\ra\infty} e^{2\gamma t} \wW_{2}^2(X_t(0), \mu)  < \infty. 
\end{equation}
We illustrate a special case is illustrated in Figure~\ref{fig:plot}.  
\begin{figure}
  \includegraphics[scale=0.5]{./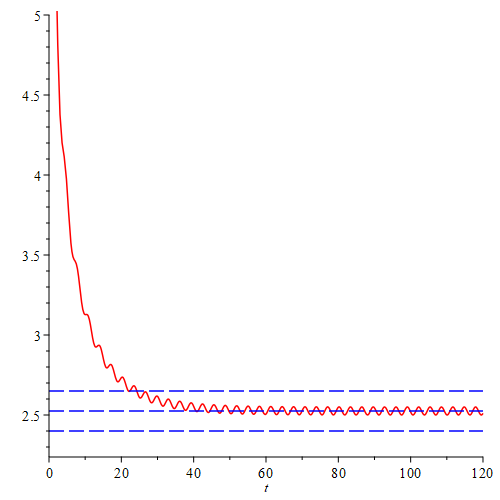}\\
  \caption{Plot of $t \mapsto e^{2\gamma t}\cdot \wW_{2}^2(X_t(0), \mu)$ for $\kappa = 1$, $\gamma = 10^{-1}$ (red curve). 
  The upper and the lower limit differ clearly due to the oscillations above and below the value 2.55 (central dashed blue curve).}\label{fig:plot}
\end{figure}
As a bottom line, we have verified the asymptotics of Theorem~\ref{thm:cutoffWp} of order $e^{-2\gamma t}$ by direct calculation for the degenerate case of the harmonic oscillator with moderate Brownian forcing.  Similarly to the case of the small noise regime as treated in \cite[Section 4.2.4]{BHPWA}, subcritical damping does not exhibit a true limit in \eqref{e:liminfsup}, as clearly seen by the oscillations in Figure~\ref{fig:plot}. 
\end{example}

\bigskip  
\subsection{\textbf{Cutoff stability of generic OU-systems driven with L\'evy forcing}}\label{sec:32}\hfill\\

\noindent In this subsection we treat general $\sigma \in \RR^{m\times n}$, $L$ with values in $\RR^n$ with finite first moment and $A \in \RR^{m\times m}$ Hurwitz stable. Additionally, we assume that $A$ has the following generic structure.  

\noindent 
\begin{definition}[Generic interaction force]\label{def:generic}
We say that $A\in \RR^{m\times m}$ is generic, if it has $m$ different (possibly complex valued) eigenvalues $\lambda_1, \dots, \lambda_m$. 
\end{definition}
\noindent In this case we have for $x\in \RR^m$ 
\[
e^{-At}x= \sum_{j=1}^m e^{-\lambda_j t}c_j(x) v_j, \quad \mbox{ where } c_j(x)\in \CC, 
\]
and $\{v_1, \dots, v_m\}$ is a basis of eigenvectors of $\CC^m$ with unit length. Note that the eigenvectors are not necessarily orthogonal. One of the main consequences of genericity in the preceding sense is the following. 
\begin{lemma}\label{lem:expgenerica}
Let $A$ satisfy Hypothesis~\ref{hyp:stable} and be generic in the sense of Definition~\ref{def:generic}. 
Then for each $x\in \mathbb{R}^m$, $x\neq 0$,  
there exist $\rho = \rho(x) >0$ and $C_i(x)>0$, $i=1,2,$ such that 
\begin{equation}\label{eq:expgenerica}
C_1(x)e^{-\rho t}\leq |e^{-At}x|\leq C_2(x)e^{-\rho t}\quad \textrm{ for all }\quad t\geq 0.
\end{equation}
\end{lemma}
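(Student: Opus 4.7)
The plan is to exploit genericity, which says that $A$ is diagonalizable over $\mathbb{C}$. Writing $A = V\Lambda V^{-1}$ with $\Lambda = \mathrm{diag}(\lambda_1,\ldots,\lambda_m)$ and $V\in\mathbb{C}^{m\times m}$ invertible with columns equal to the unit eigenvectors $v_1,\ldots,v_m$, I would express $e^{-At}x = V D(t) y$, where $D(t) := \mathrm{diag}(e^{-\lambda_1 t},\ldots,e^{-\lambda_m t})$ and $y := V^{-1}x \in \mathbb{C}^m$. Since $e^{-At}x$ lies in $\mathbb{R}^m$, its Euclidean length agrees with its $\ell^2(\mathbb{C}^m)$ length, so I would sandwich it by the operator-norm inequalities $\|V^{-1}\|^{-1}|D(t)y| \le |e^{-At}x| \le \|V\||D(t)y|$, valid for every $t\ge 0$.

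The diagonal form trivializes the middle quantity: $|D(t)y|^2 = \sum_{j=1}^m |y_j|^2 e^{-2\mathrm{Re}(\lambda_j)t}$. Since $x\neq 0$ and $V$ is invertible, the index set $J_0 := \{j : y_j \neq 0\}$ is non-empty, so I would define the decay rate as $\rho := \min\{\mathrm{Re}(\lambda_j) : j\in J_0\}$, which is strictly positive by Hypothesis~\ref{hyp:stable}. Multiplying by $e^{2\rho t}$, the terms with $\mathrm{Re}(\lambda_j)=\rho$ contribute the fixed constant $\sum_{j\in J_0,\ \mathrm{Re}(\lambda_j)=\rho} |y_j|^2 > 0$, while the remaining terms $|y_j|^2 e^{-2(\mathrm{Re}(\lambda_j)-\rho)t}$ stay in $(0,|y_j|^2]$ for every $t\ge 0$. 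Therefore $e^{2\rho t}|D(t)y|^2$ is trapped in a finite interval with strictly positive left endpoint, uniformly in $t\ge 0$.

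Combining the two squeezes yields constants $C_1(x),C_2(x)>0$ such that \eqref{eq:expgenerica} holds. The main obstacle, if one can call it that, is organizational rather than analytical: one must let $\rho$ depend on $x$ through the support of $y = V^{-1}x$ rather than equate it with the smallest real part of the whole spectrum, because $x$ may fail to excite the slowest eigen-direction. Once this dependence is tracked correctly, the strict positivity of $\rho$ is automatic from Hurwitz stability, and no almost-periodicity or oscillation argument is needed since the squared-norm identity in the eigenbasis already decouples the modes.
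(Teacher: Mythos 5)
Your proof is correct, and it takes a genuinely different route from the paper's. The paper expands $e^{-At}x=\sum_{j\in J_x}e^{-\lambda_j t}c_j(x)v_j$ and works with the Euclidean norm of this sum directly in $\mathbb{R}^m$: after factoring out $e^{-\rho_x t}$ it must control the resonant part $\sum_{j\in R_x}e^{-i\theta_j t}c_j(x)v_j$, whose norm oscillates, and it rules out $\liminf_{t\to\infty}=0$ by a compactness/diagonal argument extracting limits $z_j$ of $e^{-i\theta_j t_k}$ and invoking linear independence of the eigenvectors; continuity and positivity on compact time intervals then finish the job. You instead pass to eigencoordinates, writing $e^{-At}x=VD(t)y$ and sandwiching $|VD(t)y|$ between $\|V^{-1}\|^{-1}|D(t)y|$ and $\|V\|\,|D(t)y|$, so that the quantity to be estimated becomes the exactly computable $|D(t)y|^2=\sum_j|y_j|^2e^{-2\mathrm{Re}(\lambda_j)t}$, in which the imaginary parts drop out entirely. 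This removes the need for any almost-periodicity or subsequence argument: the linear independence of the eigenvectors enters only through the invertibility of $V$, i.e.\ the finiteness of $\|V^{-1}\|$. The trade-off is that your constants $C_1(x),C_2(x)$ absorb the condition number of $V$ and are therefore less tight than what the paper's direct treatment of the resonant sum could in principle yield, but for the purposes of Lemma~\ref{lem:expgenerica} (and its use in Theorem~\ref{thm:cutoffWp}) only positivity and finiteness of the constants matter, so your argument is fully sufficient and arguably cleaner. You also correctly identify the one essential point both proofs share: $\rho$ must be taken as the minimal real part over the modes actually excited by $x$, not over the whole spectrum.
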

\noindent The proof is given in Appendix~\ref{a:expgenerica}. With this result in mind we now state the main theorem. 

\begin{theorem}[Generic cutoff stability for L\'evy Ornstein-Uhlenbeck systems]\label{thm:cutoffWp}\hfill\\
Let $A$ satisfy Hypothesis~\ref{hyp:stable} and assume that $A$ is generic in the sense of Definition~\ref{def:generic}. We assume that $L$ satisfies Hypothesis~\ref{hyp:moments} for some $p\geq 1$. In addition $\sigma$ satisfies Hypothesis~\ref{hyp:diff}. 
For $x\neq 0$ such that $x \neq A^{-1}\sigma \mathbb{E}[L_1]$ choose $\rho_x>0$ as in \eqref{eq:expgenerica} and set 
\begin{align*}
t_\e = \frac{1}{\rho_x} |\ln(\e)|.
\end{align*}
Then for all $1 \leq q \leq p$ we have 
\begin{align}\label{eq:limitWgen}
\lim_{\e\ra 0}\frac{\wW_q(X_{\delta \cdot t_\e}(x), \mu)}{\e} = \left\{
\begin{array}{cl} 
\infty, & \mbox{for }\delta\in (0, 1),\\
0, & \mbox{for }\delta >1.
\end{array}
 \right.\\\nonumber
\end{align}
\end{theorem}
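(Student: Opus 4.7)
The plan is to prove the $\infty/0$ dichotomy \eqref{eq:limitWgen} by sandwiching $\wW_q(X_{\delta t_\e}(x),\mu)$ between matching exponential upper and lower estimates whose rates both equal $\rho_x$, so that at the critical time $t_\e=|\ln\e|/\rho_x$ one has $e^{-\rho_x t_\e}=\e$. Both sandwich inequalities come from Theorem~\ref{thm:couplings}, and Lemma~\ref{lem:expgenerica} supplies the sharp $\rho_x$-exponential behaviour of the displacement term.

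For the lower bound I start from the second lower bound in Theorem~\ref{thm:couplings}. The explicit formula $\mathbb{E}[X_t(0)]=A^{-1}(I_m-e^{-At})\sigma\mathbb{E}[L_1]$, sent to the limit $t\to\infty$, identifies the equilibrium mean as $\int z\,\mu(\ud z)=A^{-1}\sigma\mathbb{E}[L_1]$, so the bound collapses to $\wW_q(X_t(x),\mu)\geq|e^{-At}\tilde x|$ with $\tilde x:=x-A^{-1}\sigma\mathbb{E}[L_1]\neq 0$. Lemma~\ref{lem:expgenerica} then gives $|e^{-At}\tilde x|\geq C_1(\tilde x)e^{-\rho_x t}$, and substituting $t=\delta t_\e$ yields $\wW_q(X_{\delta t_\e}(x),\mu)/\e\geq C_1(\tilde x)\,\e^{\delta-1}\to\infty$ for $\delta\in(0,1)$.

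For the upper bound, the first upper bound in Theorem~\ref{thm:couplings} combined with Lemma~\ref{lem:expgenerica} gives $\wW_q(X_t(x),\mu)\leq C_2(x)e^{-\rho_x t}+\wW_q(X_t(0),\mu)$. The complementary inequality $\wW_q(X_t(0),\mu)\leq\int|e^{-At}y|\,\mu(\ud y)$ from Theorem~\ref{thm:couplings} (valid since $q\geq 1$), combined with the diagonalisability of $A$ granted by genericity and the finiteness of the first moment of $\mu$ from Hypothesis~\ref{hyp:moments} and \cite[Proposition~2.2]{Ma04}, shows that this integral also decays at the rate $e^{-\rho_x t}$. Substituting $t=\delta t_\e$ yields $\wW_q(X_{\delta t_\e}(x),\mu)/\e\leq C\,\e^{\delta-1}\to 0$ for $\delta>1$. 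The extension from one $q$ to the full range $1\leq q\leq p$ follows from the Jensen-type monotonicity $\wW_q\leq\wW_p$ together with the finiteness of the $p$-th moment of $\mu$.

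The main obstacle is exactly this rate-matching step for the noise term. One must show that $\int|e^{-At}y|\,\mu(\ud y)$ decays at rate $e^{-\rho_x t}$ and not at the possibly slower spectral abscissa $\eta_0\leq\rho_x$ produced by a brute operator-norm bound $|e^{-At}y|\leq\|e^{-At}\|\,|y|$. Closing this gap requires spectrally decomposing the integrand along the eigenbasis of $A$ (here the distinct-eigenvalue assumption of genericity is essential) and relating the spectral coefficients of $\mu$-typical $y$ to those of $\tilde x$, so that the dominant exponential indeed matches $\rho_x$. Once this is done, the two sides of the sandwich interlock at the common rate $e^{-\rho_x t}$ and the dichotomy follows on the critical scale $t_\e=|\ln\e|/\rho_x$.
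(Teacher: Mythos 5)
Your architecture is exactly the paper's: the upper bound comes from Item~(1) of Theorem~\ref{thm:couplings} together with $\wW_q(X_t(0),\mu)\leq\int|e^{-At}y|\,\mu(\ud y)$, and the lower bound comes from the mean-difference bound in Item~(2), the identification $\int z\,\mu(\ud z)=A^{-1}\sigma\EE[L_1]$ via $\EE[X_t(0)]=A^{-1}(I_m-e^{-At})\sigma\EE[L_1]$, and Lemma~\ref{lem:expgenerica} applied to $\tilde x=x-A^{-1}\sigma\EE[L_1]$. So far this matches the paper step for step.

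The problem is the step you yourself flag as the ``main obstacle'': your proposed resolution would fail. You cannot make $\int|e^{-At}y|\,\mu(\ud y)$ decay at rate $\rho_x$ by ``relating the spectral coefficients of $\mu$-typical $y$ to those of $\tilde x$'' --- $\mu$ is the law of $\int_0^\infty e^{-Ar}\sigma\,\ud L_r$ and has nothing to do with $x$; generically it charges every eigendirection, so that integral decays exactly at the spectral abscissa $\eta_0=\min_j\mathsf{Re}(\lambda_j)$ and no faster. What is actually used (implicitly, in the paper's line ``$\leq C(x)\e^{-1}e^{-\lambda_* t}$'') is the crude bound $\int|e^{-At}y|\,\mu(\ud y)\leq\|e^{-At}\|\int|y|\,\mu(\ud y)\leq Ce^{-\eta_0 t}$ (no polynomial correction, since generic $A$ is diagonalizable, and the first moment of $\mu$ is finite by Hypothesis~\ref{hyp:moments}), combined with the identification $\lambda_*=\rho_x=\eta_0$, i.e.\ with the assumption that $x$ has a nonzero spectral coefficient along a slowest eigendirection. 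In that (generic) situation there is no gap to close and the brute operator-norm bound suffices; in the degenerate situation $\rho_x>\eta_0$ the noise term only decays like $e^{-\eta_0 t}$, the upper half of the dichotomy at scale $t_\e=|\ln\e|/\rho_x$ does not follow for $\delta\in(1,\rho_x/\eta_0)$, and no spectral bookkeeping of $\mu$ will rescue it. A second, related imprecision: Lemma~\ref{lem:expgenerica} applied to $\tilde x$ produces the rate $\rho_{\tilde x}$, not $\rho_x$; these agree only under the same genericity of the spectral coefficients (the paper's proof is equally loose on this point). You should either restrict to $\rho_x=\rho_{\tilde x}=\eta_0$ or state the rates with the correct subscripts; as written, your ``fix'' replaces a harmless imprecision with an argument that cannot work.
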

\noindent Theorem~\ref{thm:cutoffWp} generalizes Corollary~\ref{cor:OUGcut} for any given initial condition $x$ to the case of a generic matrix $A$ and non-Gaussian L\'evy noise with first moments. In addition, it covers degenerate noise. For instance, Example~\ref{ex:oscBM} is covered without any of the lengthy calculations. In Example~\ref{ex:oscJC} we show how even more complex systems such as coupled chains of oscillators with moderate external heat bath is included. The proof is given after the subsequent corollary. 

\noindent Since convergence in the WKR-distance of order $p\geq 1$ is equivalent to the simultaneous convergence in distribution and the convergence of the absolute moments of order $p\geq 1$, see \cite[Theorem 6.9]{Vi09}, we also obtain the respective (pre-)cutoff stability for the $p$-th absolute moments.

\begin{corollary}[Observable pre-cutoff stability]\label{cor:observable}
Assume the hypotheses and notation of Theorem~\ref{thm:cutoffWp}. 
Then for all $1 \leq q \leq p$ and $x\neq 0$ we have for all $\delta>1$  
\begin{align*}
\lim_{\e\ra 0}~\frac{1}{\e}\cdot\Big|\EE\big[|X_{\delta \cdot t_\e}(x)|^q\big] - \EE\big[|X_\infty|^q\big]\Big|
&= 0. 
\end{align*}
\end{corollary}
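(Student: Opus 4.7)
The plan is to transfer the rate from Theorem~\ref{thm:cutoffWp} to the difference of $q$-th moments by combining the reverse triangle inequality for the $L^q$-norm with an elementary mean-value type estimate. Fix $\delta > 1$, write $X_\e := X_{\delta \cdot t_\e}(x)$, and set $a_\e := (\EE[|X_\e|^q])^{1/q}$ and $b := (\EE[|X_\infty|^q])^{1/q}$. The first step is to observe that for any coupling $(U,V)$ of $X_\e$ and $X_\infty$, Minkowski's inequality applied to $|U|$ and $|V|$ yields $|a_\e - b| \leq (\EE[|U-V|^q])^{1/q}$, and taking the infimum over couplings gives the reverse triangle inequality
\[
|a_\e - b| \leq \wW_q(X_\e, X_\infty).
\]

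Next I pass from the $q$-th roots to the moments themselves via the elementary bound $|a^q - b^q| \leq q \cdot \max\{a,b\}^{q-1} \cdot |a-b|$, valid for $a, b \geq 0$ and $q \geq 1$ (which one sees by writing $a^q - b^q = q \int_b^a s^{q-1}\, \ud s$). Combining with the previous inequality and dividing by $\e$ produces
\[
\frac{1}{\e} \big|\EE[|X_\e|^q] - \EE[|X_\infty|^q]\big| \leq q \cdot \max\{a_\e, b\}^{q-1} \cdot \frac{\wW_q(X_\e, X_\infty)}{\e},
\]
and the rightmost factor vanishes as $\e \ra 0$ directly by Theorem~\ref{thm:cutoffWp}.

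The main potential obstacle is to verify that the prefactor $\max\{a_\e, b\}^{q-1}$ stays bounded as $\e \ra 0$. Since $\delta \cdot t_\e \ra \infty$, Theorem~\ref{thm:cutoffWp} in particular gives $\wW_q(X_\e, \mu) \ra 0$, and by the characterisation \cite[Theorem 6.9]{Vi09} of $\wW_q$-convergence for $q \geq 1$ as the conjunction of weak convergence together with convergence of the $q$-th absolute moments, we obtain $a_\e \ra b < \infty$; hence $\max\{a_\e, b\}^{q-1}$ is eventually bounded by $(b+1)^{q-1}$. This closes the argument. I note that the corollary is only stated for $\delta > 1$, which is exactly what this one-sided reverse triangle strategy delivers; the matching divergence for $\delta \in (0,1)$ cannot be extracted here, because the one-dimensional observable $|\cdot|^q$ may fail to detect the relaxation of the initial condition and so only a pre-cutoff, not a full cutoff, appears on the moment level.
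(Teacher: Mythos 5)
Your proof is correct, and it follows the same overall strategy as the paper — namely, bound $\bigl|\EE[|X_{\delta t_\e}(x)|^q]-\EE[|X_\infty|^q]\bigr|$ by (a bounded prefactor) times $\wW_q(X_{\delta t_\e}(x),\mu)$ and then invoke Theorem~\ref{thm:cutoffWp} for $\delta>1$ — but the technical route to that Lipschitz-type bound is genuinely different. The paper works pointwise inside the expectation: it uses the elementary inequality $\bigl||u|^q-|v|^q\bigr|\leq q(|u|^{q-1}+|v|^{q-1})|u-v|$ (Lemma~\ref{lem:elementary}), applies H\"older's inequality to split off $\EE_\pi[|X_t-X_\infty|^q]^{1/q}$, and then optimizes over couplings $\pi$; the prefactor is controlled through explicit moment bounds on $X_t$ and $X_\infty$. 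You instead work at the level of the scalar quantities $a_\e=\EE[|X_\e|^q]^{1/q}$ and $b=\EE[|X_\infty|^q]^{1/q}$: the reverse triangle inequality for the $L^q$-norm (via Minkowski under an arbitrary coupling) gives $|a_\e-b|\leq\wW_q(X_\e,X_\infty)$, and the mean-value bound $|a^q-b^q|\leq q\max\{a,b\}^{q-1}|a-b|$ is then applied to two deterministic numbers rather than to random variables. Your version is somewhat cleaner: it avoids H\"older's inequality and the exponent bookkeeping (which in the paper's displayed constants degenerates at $p=1$), and your control of the prefactor via the moment-convergence characterization of $\wW_q$-convergence is tidier than the paper's explicit moment estimates. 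Your closing remark on why only the $\delta>1$ half survives at the level of the observable $|\cdot|^q$ correctly explains the ``pre-cutoff'' terminology.
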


\medskip 
\begin{proof}[Proof of Theorem~\ref{thm:cutoffWp}: ] 
Let $x\in \mathbb{R}^m$ and $t\geq 0$.  
By Item~(1) of Theorem~\ref{thm:couplings} and \eqref{eq:expgenerica} we have
\begin{equation}
\begin{split}
\frac{\mathcal{W}_p(X_t(x),\mu)}{\e}
\leq \frac{|e^{-At}x|}{\e}+ \frac{\mathcal{W}_p(X_t(0),\mu)}{\e}\leq \frac{|e^{-At}x|}{\e}+\frac{1}{\e}\int_{\mathbb{R}^m} |e^{-At}z|\mu(\ud z)\leq C(x)\frac{1}{\e}e^{-\lambda_* t}.
\end{split}
\end{equation}
Note that $e^{-\lambda_* t_\e}/\e=1$ and $t_{\e}\to \infty$ as $\e \to 0$. Hence, \eqref{eq:limitWgen} is valid for $\delta>1$. 

In the sequel, we show that \eqref{eq:limitWgen} is valid for $0<\delta<1$. By Lemma~\ref{thm:couplings} we have 
\begin{equation}
\left|\frac{e^{-At}x}{\e}+\frac{\mathbb{E}[X_t(0)]}{\e}-\frac{\int_{\mathbb{R}^m} z \mu(\ud z)}{\e} \right|\leq 
\frac{\mathcal{W}_p(X_t(x),\mu)}{\e}
\end{equation}
for all $t\geq 0$ and $x\in \mathbb{R}^m$, where
\begin{align*}
\mathbb{E}[X_t(0)]&=e^{-At} \int_{0}^{t} e^{As}\sigma \mathbb{E}[L_1] \ud s =\int_{0}^{t}e^{-A(t-s)}\ud s \sigma \mathbb{E}[L_1]\\
&=\int_{0}^{t}e^{-As}\ud s \sigma \mathbb{E}[L_1]=A^{-1}(I_m-e^{-At})\sigma \mathbb{E}[L_1].
\end{align*}
Hence sending $t\ra\infty$ we have in abuse of notation for $X_\infty \stackrel{d}{=} \mu$ that 
\[
\mathbb{E}[X_\infty]=A^{-1}\sigma \mathbb{E}[L_1],
\]
which gives
\[
\mathbb{E}[X_t(0)]-\mathbb{E}[X_\infty]=-A^{-1}e^{-At}\sigma \mathbb{E}[L_1].
\]
Then by \eqref{eq:expgenerica} 
\begin{align*}
\left|\frac{e^{-At}x}{\e}+\frac{\mathbb{E}[X_t(0)]}{\e}-\frac{\int_{\mathbb{R}^m} z \mu(\ud z)}{\e} \right|
&= \left|\frac{e^{-At}x}{\e}+\frac{\mathbb{E}[X_t(0)]}{\e}-\frac{\mathbb{E}[X_\infty]}{\e} \right|\\
&=\left|\frac{e^{-At}(x-A^{-1}\sigma \mathbb{E}[L_1])}{\e} \right|\\
&\geq C_2(x-A^{-1}\sigma \mathbb{E}[L_1]) 
C_1(x)\frac{e^{-\lambda_* t}}{\e}.
\end{align*}
Again, since $e^{-\lambda_* t_\e}/\e=1$ and $t_{\e}\to \infty$ as $\e \to 0$, \eqref{eq:limitWgen} is valid for $\delta\in (0,1)$. This finishes the proof. 
\end{proof}

\noindent In the sequel, we show Corollary~\ref{cor:observable} for which we use the following lemma, shown in \cite[p.972, Lemma B.2]{BL23}. 

\begin{lemma}\label{lem:elementary}
For any $p\geq 1$ we have the estimate 
\begin{align*}
| |x|^p - |y|^p |  \leq p \Big(|x|^{p-1} + |y|^{p-1}\Big) |x-y|, \qquad x, y\in \RR^m. 
\end{align*}
\end{lemma}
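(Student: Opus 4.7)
The plan is to reduce the vector inequality to a one-dimensional estimate on the map $t \mapsto t^p$ and then invoke the fundamental theorem of calculus. By the reverse triangle inequality $\bigl|\,|x|-|y|\,\bigr| \leq |x-y|$, and since the right-hand side of the target inequality depends on $x, y$ only through $|x|$ and $|y|$, it suffices to prove the scalar statement
\[
|a^p - b^p| \leq p\bigl(a^{p-1}+b^{p-1}\bigr)|a-b| \qquad \text{for all } a, b \geq 0,
\]
and then specialize it to $a = |x|$ and $b = |y|$, absorbing the reverse triangle inequality at the very end.

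For the scalar step I would proceed as follows. The case $p=1$ is immediate, since the inequality reduces to $|a-b|\leq 2|a-b|$. For $p>1$, by symmetry assume without loss of generality that $a\geq b\geq 0$. Then $f(t)=t^{p}$ is continuously differentiable on $[0,\infty)$ with $f'(t) = p\, t^{p-1}$, and the fundamental theorem of calculus yields
\[
a^p - b^p = \int_b^a p\, t^{p-1}\,\ud t \;\leq\; p\, a^{p-1}(a-b),
\]
where the inequality uses monotonicity of $t\mapsto t^{p-1}$ on $[0,\infty)$ for $p\geq 1$. Since $a^{p-1}\leq a^{p-1}+b^{p-1}$, this already gives the desired bound in the one-dimensional case; reinserting $a=|x|$, $b=|y|$ and applying $\bigl|\,|x|-|y|\,\bigr|\leq |x-y|$ completes the proof.

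I do not expect any genuine obstacle here: the estimate is a routine Lipschitz-type bound for the real-valued map $t\mapsto t^p$ on the half-line, and is standard. The only two points worth making explicit in the write-up are the reduction to the scalar problem via the reverse triangle inequality (this is what lets the final bound depend only on the Euclidean norms $|x|$ and $|y|$ rather than on componentwise data), and the trivial symmetrization $\max(a,b)^{p-1}\leq a^{p-1}+b^{p-1}$ that converts the natural one-sided bound into the symmetric form stated in the lemma.
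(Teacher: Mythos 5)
Your proof is correct: the reduction to the scalar inequality via the reverse triangle inequality $\bigl|\,|x|-|y|\,\bigr|\leq|x-y|$, followed by the fundamental-theorem-of-calculus (equivalently, mean value theorem) bound $|a^p-b^p|\leq p\max(a,b)^{p-1}|a-b|\leq p(a^{p-1}+b^{p-1})|a-b|$ for $a,b\geq 0$, is exactly the standard argument, and every step (including the $p=1$ case and differentiability of $t\mapsto t^p$ at $0$ for $1<p<2$) is handled soundly. Note that the paper itself does not prove this lemma but only cites it from \cite[Lemma B.2, p.~972]{BL23}, so your write-up serves as a correct, self-contained substitute for that reference and there is no internal proof to diverge from.
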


\medskip

\begin{proof}[Proof of Corollary~\ref{cor:observable}: ] With the help of Lemma~\ref{lem:elementary} for $p\geq 1$ and H\"older's inequality 
we have for any coupling $\pi$ between $X_t$ and $X_\infty$ that 
\begin{align*}
|\EE[|X_t|^p] - \EE[|X_\infty|^p]| 
&\leq p \EE_\pi [(|X_t|^{p-1} + |X_\infty|^{p-1}) |X_t-X_\infty|]\\ 
&\leq p \EE_\pi[(|X_t|^{p-1} + |X_\infty|^{p-1})^\frac{p}{p-1}]^\frac{p-1}{p} 
\EE_\pi[|X_t-X_\infty|^p]^\frac{1}{p}\\
&\leq p 2^{\frac{p}{p-1}-1}\Big(\EE[|X_t|^{p}]^\frac{p-1}{p} 
+ \EE[|X_\infty|^{p})]^\frac{p-1}{p}\Big)
\EE_\pi[|X_t-X_\infty|^p]^\frac{1}{p}. 
\end{align*}
Bearing in mind that 
\[
\EE[|X_t|^{p-1}]^\frac{1}{p-1} \leq |e^{-At} x| + \EE[|X_t(0)|^{p-1}]^\frac{1}{p-1} 
\]
we have 
\begin{align*}
|\EE[|X_t|^p] - \EE[|X_\infty|^p]| 
&\leq p 2^{\frac{2}{p-1}}\Big(|e^{-At} x|^p + \EE[|X_t(0)|^{p-1}]^\frac{p}{p-1}´
+ \EE[|X_\infty|^{p-1})]^\frac{p-1}{p}\Big)
\EE_\pi[|X_t-X_\infty|^p]^\frac{1}{p}. 
\end{align*}
Optimizing over all such couplings $\pi$ we obtain 
\begin{align*}
|\EE[|X_t|^p] - \EE[|X_\infty|^p]| 
&\leq p 2^{\frac{2}{p-1}}\Big(|e^{-At} x|^p + \EE[|X_t(0)|^{p-1}]^\frac{p}{p-1}
+ \EE[|X_\infty|^{p-1})]^\frac{p-1}{p}\Big)
\wW_{p}(X_t, X_\infty), 
\end{align*}
such that 
\begin{align*}
\frac{1}{\e} |\EE[|X_t|^p] - \EE[|X_\infty|^p]| 
&\leq p 2^{\frac{2}{p-1}}\Big(|e^{-At} x|^p + 2\EE[|X_\infty|^{p-1})]^\frac{p-1}{p}\Big)
\frac{\wW_{p}(X_t, X_\infty)}{\e}. 
\end{align*}
By Theorem~\ref{thm:cutoffWp} the desired result follows for $\delta>1$. 
\end{proof}

\bigskip 
\noindent In fact, the result can be further sharpened (without proof), as follows. 

\begin{theorem}[Window cutoff stability]\label{thm:window}
Assume the hypotheses and notation of Theorem~\ref{thm:cutoffWp}. 
Then for all $1 \leq q \leq p$ and $x\neq 0$ 
 such that $x \neq A^{-1}\sigma \mathbb{E}[L_1]$ 
we have 
\begin{equation}\label{e: cutoffwindows}
\lim_{r\ra\infty} \begin{cases} 
\liminf\limits_{\e\ra 0}~\e^{-1} \cdot \wW_q(X_{t_\e-r}(x), \mu) =  
\infty, &\\ 
\limsup\limits_{\e\ra 0}~\e^{-1} \cdot \wW_q(X_{t_\e+r}(x), \mu) = 0.  &\\
 \end{cases}
\end{equation}
 \end{theorem}
 
 \bigskip
 {
\section{Examples} 
We stress that in this section the matrices $A$ that appears in the examples below are generic in the sense of Definition~\ref{def:generic}, and the quantitative upper-lower bounds given in Theorem~\ref{thm:couplings} are valid and available with less effort than lengthy computations, which we illustrate below for specific models. Moreover, our quantitative upper-lower bounds cover the situation of a multidimensional undecoupled L\'evy noise with finite first moment
and the $\mathcal{W}_p$ for  any $p\geq 1$.
By Theorem~\ref{thm:cutoffWp} we obtain cutoff stability at explicitly given time scale $t_\e$.
\begin{example}
The simplest example of our setting is obviously the ordinary scalar (L\'evy-)Ornstein-Uhlenbeck process given by 
\[
\ud X_t(x) = -\lambda X_t(t) \ud t + \sigma \ud L_t, \qquad X_0(x) = x\in \RR, 
\]
where $\lambda, \sigma>0, x\in \RR$ and $L = (L_t)_{t\geq 0}$ is a L\'evy process (including Brownian motion) with finite first moment. 
Denote by $\mu$ the unique limiting measure or dynamical equilibrium. 
We refer to Remark~\ref{rem:explan} Item~(3).
Then 
Theorem~\ref{thm:cutoffWp} implies 
for $t_\e:= -\frac{1}{\lambda} \ln(\e)$, $\e>0$, that 
\[
\lim_{\e \ra 0}\frac{\wW_p(X_{\delta t_\e}(x), \mu)}{\e} = \begin{cases} \infty & \delta \in (0,1),\\ 0 &\delta >1.\end{cases} 
\]
In other words, for $\delta>1$ we have 
\[
\wW_p(X_{\delta t_\e}(x), \mu) \ll \e,\quad \mbox{ as }\quad \e \ra 0,
\]
and for $\delta<1$
\[
\wW_p(X_{\delta t_\e}(x), \mu) \gg \e,\quad \mbox{ as }\quad \e \ra 0.
\]
\end{example}
}

\bigskip 
{
\begin{example}\label{ex:gyrator}
The second simplest class of examples consists of the so-called Brownian gyrator (see \cite{DBT23}) given by the solution of the following SDE  
\begin{equation}
\ud 
X_t(x) = 
\left(
\begin{array}{cc} 
-(\gamma+\kappa) & \kappa\\
 \kappa & -(\gamma+\kappa)
\end{array}
\right) X_t(x) \ud t
+ 
\left(
\begin{array}{cc} 
\sigma_1 & 0 \\
0 & \sigma_2
\end{array}
\right) \ud B_t, \qquad X_0(x) = x, 
\end{equation}
where $X_t(x) = (X_{t, 1}(x), X_{t, 2}(x))$, $B_t = (B_{t,1}, B_{t, 2})$, and $x = (x_1, x_2)$.  
It represents the positions $X_{t,1}$ and $X_{t,2}$ of two Brownian particles with unit mass which evolve, with common friction constant $\gamma>0$ and mutual spring interaction with spring constant $\kappa>0$. Each of the particles is connected with an individual heat bath, which are generically at different temperature $\sigma_1\not=\sigma_2$. 
Such systems have been studied in theoretical contexts, such as models of two temperature diffusions, models of minimal heat engines on the nanoscale, keystone examples for the control theory of the harmonic oscillator, and recently also implemented by optical experiments, see \cite[p.3, left column]{BPS20}. 
In this context, we may not apply Theorem~\ref{thm:Gauss} due to $\sigma_1\neq \sigma_2$. 
However, still we may calculate directly formula \eqref{e:Pythagoras}
\begin{align*}
\wW_2(X_t(x), \mu)&=
\left(
|e^{-At }\widetilde{x}|^2+ \textsf{Tr}\left({\Sigma_t+\Sigma_\infty-2(\Sigma^{1/2}_\infty \Sigma_t \Sigma^{1/2}_\infty)^{1/2}}\right)\right)^{1/2}\\
\end{align*}
where 
\[
A = \left(
\begin{array}{cc} 
-(\gamma+\kappa) & \kappa\\
 \kappa & -(\gamma+\kappa)
\end{array}
\right) \qquad \mbox{ and }\qquad 
\sigma = \mathsf{diag}(\sigma_1, \sigma_2). 
\]
Straightforward computations yields that 
\[
e^{At} =  
\left(\begin{array}{cc}
\frac{{\mathrm e}^{-t \gamma}}{2}+\frac{{\mathrm e}^{-t \gamma -2 t \kappa}}{2} & -\frac{{\mathrm e}^{-t \gamma -2 t \kappa}}{2}+\frac{{\mathrm e}^{-t \gamma}}{2} 
\\
 -\frac{{\mathrm e}^{-t \gamma -2 t \kappa}}{2}+\frac{{\mathrm e}^{-t \gamma}}{2} & \frac{{\mathrm e}^{-t \gamma}}{2}+\frac{{\mathrm e}^{-t \gamma -2 t \kappa}}{2} 
\end{array}\right).
\]
For $\sigma=\sigma_1=\sigma_2>0$ we have
\[
\begin{split}
\mathcal{W}_2(X_t(0),\mu)&=2\sigma^2\frac{\gamma+\kappa}{(\gamma+2\kappa)\gamma}-
\frac{\sigma^{2}}{2(\gamma+2\kappa)}{\mathrm e}^{-2 t \left(\gamma +2 \kappa \right)}  -\frac{\sigma^{2}}{2\gamma} {\mathrm e}^{-2 \gamma  t}\\
&\qquad
-\frac{2 \gamma\sigma^2  \sqrt{ \left(1-e^{-(2\gamma+4\kappa)t}\right)}-2\left(\gamma +2 \kappa \right)\sigma^2 \sqrt{1-e^{-2\gamma t}}}{2 \left(\gamma +2 \kappa \right) \gamma}
\end{split}
\]
yielding that 
\[
\lim\limits_{t\to\infty}\mathcal{W}_2(X_t(0),\mu)=2\sigma^2\frac{\gamma+\kappa}{(\gamma+2\kappa)\gamma}-\frac{\sigma^2}{(\gamma+2\kappa)}-\frac{\sigma^2}{\gamma}=0.
\]
 The reason that this limit exists in comparison to Example~\ref{ex:oscBM} is the lack of imaginary parts in the spectrum of the interaction matrix due to its symmetry, and hence diagonalizability. Such a profile cannot be guaranteed in general. In \cite{BHPWA}, the authors characterized the existence of a cutoff profile in terms of orthogonality properties of the generalized eigenvectors of the interaction matrix. 
With the help of Maple (2023) 
we calculate explicitly 
\begin{align*}
\wW_{2}(X_t(0), \mu)= \frac{J_1(t)}{I_4} &-\frac{
\sqrt{2}\, I_2}{I_4} 
\sqrt{
\frac{
((I_7
-4\, \sqrt{
\frac{
(J_{10}(t) + J_9(t)
) 
I_5^{2} I_1^{2}
}{
J_5(t)
}
}) ({\mathrm e}^{\gamma  t})^{2}
-I_1^{2} I_2^{2} I_5^{2}) 
({\mathrm e}^{t \kappa})^{4}
+J_3(t)
}{
J_2(t)
}
}\\
&-\frac{\sqrt{2}\, I_2}{I_4} \sqrt{
\frac{
(
(
I_{7}+4\, \sqrt{
\frac{
(J_{10}(t) -J_9(t))
I_5^{2} I_1^{2}
}{
J_5(t)
}
}
) 
({\mathrm e}^{\gamma  t})^{2}
-I_1^{2} I_2^{2} I_5^{2}) ({\mathrm e}^{t \kappa})^{4}
+J_3(t)
}{
J_2(t)
}
},
\end{align*}
where 
\begin{align*}
I_1 &=  \gamma +2 \kappa,\\
I_2 &= \sigma_1^{2}+\sigma_2^{2},\\
I_3 &= \sigma_1^{4}+\sigma_2^{4},\\
I_4 &= 4 I_1 \gamma,\\
I_5 &= \gamma +\kappa,\\
I_6 &= \gamma^{2} I_1^{2} I_5^{2},\\
I_7 &= I_3 4 \gamma^{4}+16 \kappa  I_3 \gamma^{3}
+22 \kappa^{2} (\sigma_{1}^{4}+\frac{6}{11} \sigma_{1}^{2} \sigma_{2}^{2}+\sigma_{2}^{4}) \gamma^{2}
+12 \kappa^{3} I_1^{2} \gamma 
+4 \kappa^{4} I_1^{2},\\
I_8 &= ((\gamma^{3}+3 \gamma^{2} \kappa +2 \gamma  \,\kappa^{2}-\kappa^{3}) \sigma_{1}^{4}-2 \sigma_{2}^{2} (\gamma^{3}+5 \gamma^{2} \kappa +6 \gamma  \,\kappa^{2}+\kappa^{3}) \sigma_{1}^{2}+\sigma_{2}^{4} (\gamma^{3}+3 \gamma^{2} \kappa +2 \gamma  \,\kappa^{2}-\kappa^{3})) \gamma, \\
I_9 &= (\gamma^{3}+3 \gamma^{2} \kappa +2 \gamma  \,\kappa^{2}+\kappa^{3})  \sigma_{1}^{4}
-2 \sigma_{2}^{2} (\gamma^{3}
+\gamma^{2} \kappa 
-2 \gamma  \,\kappa^{2}
-\kappa^{3}) \sigma_{1}^{2},\\
J_1(t) &= (4 \gamma +4 \kappa -{\mathrm e}^{-2 t I_2} \gamma -I_2 {\mathrm e}^{-2 \gamma  t} ) I_1,\\
J_2(t) &= ({\mathrm e}^{t \kappa})^{4} ({\mathrm e}^{\gamma  t})^{2} I_2^{2} I_5^{2},\\
J_3(t) &=-2 \gamma^{2} (\sigma_{1}-\sigma_{2})^{2} (\sigma_{1}
+\sigma_{2})^{2} I_2^{2} ({\mathrm e}^{t \kappa})^{2}
-I_6,\\
J_5(t) &= ({\mathrm e}^{\gamma  t})^{4} ({\mathrm e}^{t \kappa})^{8},\\
J_6(t) &= {(I_2 ({\mathrm e}^{t \kappa})^{2}+\gamma )}^{2} (I_1^{2} I_2^{2} I_5^{2} ({\mathrm e}^{t \kappa})^{4}
+2 ((\gamma^{2}+2 \gamma  \kappa -\kappa^{2}) \sigma_{1}^{4}
-6 (\gamma^{2}+2 \gamma  \kappa +\frac{1}{3} \kappa^{2}) \sigma_{2}^{2} \sigma_{1}^{2}\\
&\quad+\sigma_{2}^{4} (\gamma^{2}+2 \gamma  \kappa -\kappa^{2})) I_2 \gamma  ({\mathrm e}^{t \kappa})^{2}+I_6),\\
J_8(t) &= (
((\gamma^{3}+3 \gamma^{2} \kappa +2 \gamma  \,\kappa^{2}+\kappa^{3}) \sigma_{1}^{4}
-2 \sigma_{2}^{2} (\gamma^{3}+\gamma^{2} \kappa -2 \gamma  \,\kappa^{2}-\kappa^{3}) \sigma_{1}^{2}\\
&\quad+\sigma_{2}^{4} (\gamma^{3}+3 \gamma^{2} \kappa +2 \gamma  \,\kappa^{2}+\kappa^{3})) I_2 ({\mathrm e}^{t \kappa})^{2}
+
I_8
),\\ 
J_9(t) &= -\frac{1}{2}
I_5 (I_2 ({\mathrm e}^{t \kappa})^{2}+\gamma )  J_8(t)
({\mathrm e}^{t \kappa})^{4} ({\mathrm e}^{\gamma  t})^{2}
+\frac{
J_6(t)
}{
16
},\\
J_{10}(t) &= I_5^{2} ((\gamma^{4}+4 \gamma^{3} \kappa +5 \gamma^{2} \kappa^{2}+2 \gamma  \,\kappa^{3}+\kappa^{4}) \sigma_{1}^{4}-2 \sigma_{2}^{2} (\gamma^{2}+3 \gamma  \kappa +\kappa^{2}) (\gamma^{2}+\gamma  \kappa -\kappa^{2}) \sigma_{1}^{2} \\
&\quad+\sigma_{2}^{4} (\gamma^{4}+4 \gamma^{3} \kappa +5 \gamma^{2} \kappa^{2}+2 \gamma  \,\kappa^{3}+\kappa^{4})) 
({\mathrm e}^{t \kappa})^{8} ({\mathrm e}^{\gamma  t})^{4}.
\end{align*}
In Figure~\ref{fig:plot2} we observe the non-oscillatory asymptotic decay of order $e^{-2t}$ of $\wW_{2}(X_t(0), \mu)$ for the parameters
$\kappa = 0.8$, $\gamma = 1$, $\sigma_1=2$ and $\sigma_2 = 1$.
\begin{figure}
  \includegraphics[scale=0.4]{./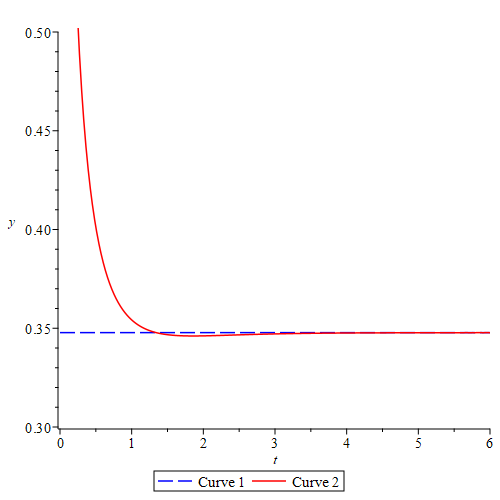}\\
  \caption{Plot of $e^{2 t} \wW_{2}(X_t(0), \mu)$ for $\kappa = 0.8$, $\gamma = 1$, $\sigma_1=2$ and $\sigma_2 = 1$ (red curve). The precise limiting value is given by $\frac{3205}{9216}$ (blue dashed line).}\label{fig:plot2}
\end{figure} 
\end{example}
}

{
\begin{example}[A biophysical 
transcription-translation model in equilibrium]\label{ex:bio}
The next more complex system which falls under our scenario is the case of a stochastic linear harmonic oscillator. It is the basic equilibrium scenario in many applications in the sciences, for instance in a bio-physical transcription-translation model of mRNA and proteins concentrations, \cite[p.1251, left column, first display]{CPLO07} for constant DNA-mRNA transcription rate $k_B>0$ and constant internal transcriptional noise level $q$. The positive constants $\gamma_R$ and $\gamma_p$ represent the rate of degradation of the mRNA and the protein, while $k_p>0$ is the represents the necessary amount of mRNA needed in order to produce a protein. 
\begin{align*}
\frac{\ud x}{\ud t} &= k_R - \gamma_R x + q\dot B,\\
\frac{\ud y}{\ud t} &= k_p x - \gamma_p y,  
\end{align*}
which reads as follows: 
\begin{equation}
\ud 
\left(
\begin{array}{c} 
x\\y
\end{array}
\right)
= 
\left(
\begin{array}{c} 
k_B\\
0
\end{array}
\right) \ud t
+
\left(
\begin{array}{cc} 
-\gamma_R & 0\\
k_p & -\gamma_p
\end{array}
\right)
\left(
\begin{array}{c} 
x\\
y
\end{array}
\right) \ud t
+ 
\left(
\begin{array}{c} 
1\\
0
\end{array}
\right) \ud B
\end{equation}
with invariant distribution 
\[
\mathcal{N}\left(\frac{k_R}{\gamma_R}, \frac{q^2}{2 \gamma_R}\right) \otimes \mathcal{N}\left(\frac{k_p k_R}{\gamma_p \gamma_R}, \frac{q^2 k_p^2}{2\gamma_R \gamma^2_p}\right).
\]
We note that neither the ergodicity bound of Theorem~\ref{thm:Gauss}, 
nor the existence of a cutoff profile $\mathcal{P}_{(x_1, x_)}(r)$ 
as in Example~\ref{ex:gyrator} is valid, as carried out in Example~\ref{ex:oscBM}. 
Instead, Theorem~\ref{thm:cutoffWp}, Corollary \ref{cor:OUGcut} and Corollary \ref{cor:window} yields 
for generic initial values $x= (x_1, x_2)\neq 0$, $\langle x, v_1\rangle \neq 0$, then we have 
for $t_\e := \frac{1}{\mathsf{Re}(\la_1)} |\ln(\e)|$ the following cutoff stability 
\begin{equation}
\begin{cases} 
\liminf\limits_{\e\ra 0}~\e^{-1} \cdot \wW_2(X_{t_\e+r}(x), \mu) \ra \infty  &\mbox{ as } r\ra\infty,\\
\limsup\limits_{\e\ra 0}~\e^{-1} \cdot \wW_2(X_{t_\e+r}(x), \mu) \ra  
0 &\mbox{ as }r\ra-\infty.
 \end{cases}
\end{equation}
where $\gamma = \min\{\gamma_R, \gamma_p\}>0$. 
Similar calculations for $\mathcal{W}_2(X_t(0),\mu)$ 
as in Example~\ref{ex:gyrator}
are carried out in Example~\ref{ex:oscBM}. 
\end{example}
}

 \bigskip 
\begin{example}[Cutoff stability of a Jacobi chain under fixed amplitude L\'evy forcing with first moments]\label{ex:oscJC}\hfill\\
This example comes from the recent work  
\cite[Section 4.1]{RAQUEPAS} and \cite[Section 4.2]{JPS17}. 
In \cite[Subsections 3.2.1 and 3.2.2]{BHPWA} 
the cutoff thermalization for small noise was discussed thoroughly. We show that Theorem~\ref{thm:cutoffWp} covers cutoff stability in the sense of \eqref{e:cutoffstability} even for heat baths a fixed temperature in this benchmark example.  
Consider the following Hamiltonian of $m$ coupled scalar oscillators
\begin{align*}
H:\RR^m & \times  \RR^m \ra \RR,\qquad (p, q) \mapsto H(p, q) := \frac{1}{2} \sum_{i=1}^m p_i^2 + \frac{1}{2} \sum_{i=1}^{m} \gamma q_i^2  
+ \frac{1}{2} \sum_{i=1}^{m-1} \kappa (q_{i+1}- q_i)^2.
\end{align*}
Coupling the first and the $m$-th oscillator to a Langevin heat bath each 
with positive temperatures $\varsigma_1^2$ and $\varsigma_m^2$,  
a coupling constant $\kappa>0$ and friction coefficient $\gamma>0$ yields 
for $X = (X^{1}, \dots, X^{2m}) = (p, q) = (p_1, \dots, p_m, q_1, \dots, q_m)$ 
the $2m$-dimensional system 
\begin{align}\label{e:JCsystem}
\ud X^\e_t = -A X^\e_t \ud t + \sigma \ud L_t,
\end{align}
where $A$ is a $2m$-dimensional real square matrix of the following shape 
\begin{align*}
A = \left(
\begin{array}{ccccc|ccccccc}
\varsigma_1 &0& \dots & &0&\kappa + \gamma &-\kappa&0&& \dots  & 0\\             
  0& 0 &\dots& &&-\kappa & 2\kappa + \gamma & - \kappa& 0& \dots &\\             
&\vdots &\ddots&&\vdots&0&-\kappa &2\kappa + \gamma&-\kappa&0&\vdots \\
\vdots &&\ddots&\vdots &&&0&\ddots&\ddots&\ddots&0\\
 &&\dots&0 &0 & \vdots &&0 &-\kappa & 2\kappa + \gamma & - \kappa\\             
0&& \dots & 0& \varsigma_n& 0& \dots &&0&-\kappa & \kappa + \gamma\\
\hline
-1&0& &\dots&0& 0 &&\dots&  & &0\\             
0&-1& 0&\dots&& &&&  &&\\             
\vdots&0&\ddots &\ddots&\vdots&\vdots& &&  &&\vdots \\             
&\ddots&0 &-1&0& &&& &&\\             
0&&\dots &0&-1& 0 &&\dots&  &&0\\             
\end{array}
\right),  
\end{align*}
and $L_t = (L^1_t, 0, \dots, L^m_t, 0, \dots, 0)^*$. 
Here $L^1, L^m$ are one dimensional independent L\'evy processes satisfying Hypothesis \ref{hyp:moments} for some $p\geq 1$. By Section 4.1 in \cite{RAQUEPAS} $A$ satisfies Hypothesis \ref{hyp:stable}. 
Consequently, if $A$ - in addition - is generic in the sense of Definition~\ref{def:generic},  
Theorem \ref{thm:cutoffWp} implies that the system exhibits cutoff stability towards its unique invariant measure $\mu$ in the sense of \eqref{e:cutoffstability} for some time scale $t_\e$.  
We illustrate this phenomenon for the following choice of parameters $\varsigma_1=\varsigma_m=\kappa=1$, $\gamma=0.01$ and $m=5$ with the help of Wolfram Mathematica 12.1. 
In this particular case the interaction matrix $A$ looks as follows 
\[\left( \begin {array}{cccccccccc} 1&0&0&0&0&1.01&-1&0&0&0\\ \noalign{\medskip}0&0&0&0&0&-1&2.01&-1&0&0\\ \noalign{\medskip}0&0&0&0&0&0&-1&2.01&-1&0\\ \noalign{\medskip}0&0&0&0&0&0&0&-1&2.01&-1\\ \noalign{\medskip}0&0&0&0&1&0&0&0&-1&1.01\\ \noalign{\medskip}-1&0&0&0&0&0&0&0&0&0\\ \noalign{\medskip}0&-1&0&0&0&0&0&0&0&0\\ \noalign{\medskip}0&0&-1&0&0&0&0&0&0&0\\ \noalign{\medskip}0&0&0&-1&0&0&0&0&0&0\\ \noalign{\medskip}0&0&0&0&-1&0&0&0&0&0\end {array} \right) \]
with the following vector of eigenvalues 
\[
\left(
\begin{array}{l}
\la_1\\
\bar \la_1\\
\la_2\\
\bar \la_2\\
\la_3\\
\bar \la_3\\
\la_4\\
\bar \la_4\\
\la_5\\
\la_6
\end{array}
\right)
=
\left(
\begin{array}{l}
0.0263377 + 1.88656\cdot i\\ 0.0263377 - 1.88656\cdot i\\
0.104782 + 1.55549\cdot i\\
0.104782 - 1.55549\cdot i\\
0.234099 + 1.06262\cdot i\\ 
0.234099 - 1.06262\cdot i\\ 
0.395218 + 0.517319\cdot i\\ 
0.395218 - 0.517319\cdot i\\ 
0.452655 + 0.\cdot i\\ 
0.0264706 + 0.\cdot i
\end{array}
\right)\qquad\mbox{ with } \rho = \mathsf{Re}(\la_1), \theta = \mathsf{Im}(\la_1). 
\]
Since all eigenvalues are different $A$ is generic in the sense of Definition~\ref{def:generic}. 
Therefore the solution $X$ of the system \eqref{e:JCsystem} satisfies the hypotheses of Theorem~\ref{thm:cutoffWp} for all initial values $x\neq 0$ such that $x \neq A^{-1} \sigma \EE[L_1] = A^{-1}(\varsigma_1, 0, \dots, 0, \varsigma_m, 0, \dots, 0)^*$. 
\end{example}

{
\begin{example}[More general networks]\label{ex:oscmoregeneral}\hfill\\
\begin{enumerate}
 \item For more general network topologies of harmonic oscillators 
with some of the oscillators connected to heat reservoirs at different temperatures 
we refer to the works of \cite{BLL04, CEHR18, JPS17, RAQUEPAS}. 
While the authors there typically work with non-linear interaction potential, 
our situation only covers the case of quadratic potentials. 
In \cite{BLL04} the authors study crystal type extensions of linear Jacobi chains, which were generalized in 
\cite{CEHR18, JPS17, RAQUEPAS}. 
\item The admissible network topologies in \cite{JPS17, RAQUEPAS} between heat reservoirs and the spring interaction of the springs are hidden in terms of the controllability of $-A$ and $\sigma$, which is equivalent to the well-known Kalman condition of the existence of some $m_*\leq m$ such that 
\[
\mathsf{span}\{\sigma e_i, A \sigma e_i, A^2 \sigma e_i, \dots, \sigma A^{m_*-1} e_i, i=1, \dots, m\} = \mathbb{R}^m. 
\]
\item In \cite{CEHR18} the authors give an explicit construction for sufficient conditions on the controllability in terms of the network topology, which turns the graph of connected springs via a linear sequence of ''nicely connected`` layers of spring masses. Given a finite set of masses $\mathcal{G}$ and the connections $E \subset \mathcal{G} \times \mathcal{G}$. 
Consider the set $\mathcal{B}\subset \mathcal{G}$ connected to the heat reservoirs. 
 Then $\mathcal{B}$ is \textit{nicely connected} to a vertex $v\in \mathcal{G}\setminus \mathcal{B}$ ($\mathcal{B} \leadsto v$, for short) 
 if there exists $b\in \mathcal{B}$ such that 
$(b, v)\in E$, but $b$ is not connected to any other vertes $v'\in \mathcal{G}\setminus \mathcal{B}$. It is worth noting, that for $\mathcal{B} \leadsto v$ it is necessary that at least one $b\in \mathcal{B}$ satisfies the preceding condition, while all other connections of $v$ to $b' \in \mathcal{B}$ might violate it. 
If we denote by $\mathcal{T}\mathcal{B}$ (the first layer of) all vertices $v\in \mathcal{G}\setminus \mathcal{B}$ to which $\mathcal{B}$ is ''nicely connected`` to, and if $\mathcal{G} = \bigcup_{n\geq n_0} \mathcal{T}^n \mathcal{B}$, where $\mathcal{T}^{n+1} \mathcal{B} = \mathcal{T} (\mathcal{T}^{n} \mathcal{B})$, $n\geq 0$, then condition C1 in \cite{CEHR18} is satisfied. Under additional conditions C2-C5, that is, non-degeneracy of the (possibly nonlinear) interaction potentials (C2), homogeneity and coercivity of the (possibly nonlinear) interaction potentials (C3), the local injectivity of the interaction forces (C4) and the asymptotic domination of the interaction potentials over the pinning potentials (C5), there is an exponential convergence of the convergence in law. Natural applications for these kind of systems are for instance the micromolecular dynamics of the dendritic spine of a neuronal cell, see \cite[Chapter 5, Subsection 5.2.9]{Sch13} formula (5.27). 
\item We present a simple network of three completely connected oscillators with one heat reservoir 
connected to the first mass, see Figure~\ref{fig:graph}, which does not satisfy (C1) in \cite{CEHR18}: 
\begin{figure}
\begin{tikzpicture}[->,>=stealth',shorten >=2pt, line width=1pt, 
                                  node distance=2cm, style ={minimum size=10mm}]
\tikzstyle{every node}=[font=\large]
\node [circle, draw] (a) at (0,0) {1};
\node                (d) at (-3,0) {};
 \path  (a) edge [loop above] node {$-3$} (a);
\node [circle, draw] (b)  at (4,0) {2};
 \path  (b) edge [loop right] node {$-3$} (b);
\node [circle, draw] (c) at (2,-3) {3};
\draw[->] (a) to[bend left] node[above] {$1$} (b);
\draw[->] (b) to node[above] {$1$}(a);
 \path  (c) edge[loop below] node {$-3$} (c);
\draw[->] (a) to node[above] {$2$} (c);
\draw[->] (c) to[bend left] node[above] {$2$}(a);
\draw[->] (b) to[bend left] node[above] {$1$}(c);
\draw[->] (c) to node[above] {$1$} (b);
\draw[->] (d) to node[above] {$\sigma_1 = 1$} (a);
\end{tikzpicture}
\caption{Transition graph for a 3 component connected oscillator with a heat reservoir in the first mass.}\label{fig:graph}
\end{figure}
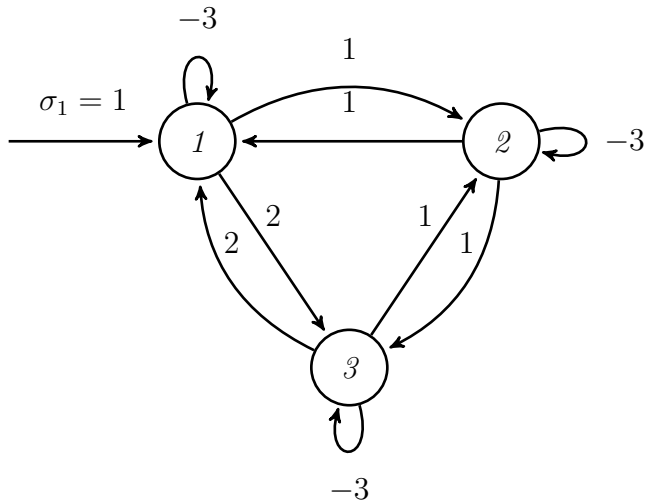
The respective stochastic differential equation satisfies 
\[
\ud X_t(x) = - AX_t(x) \ud t + \sigma \ud B, \qquad X_0(x) = x,  
\]
where 
\[
-A = 
\left(
\begin{array}{cccccc}  
1  & 0 &0 & 3 & -1  & -2\\
0  & 0 &0 & -1 & 3  & -1  \\
0  & 0 &0 & -2 & -1 & 3\\
-1 & 0 &0 & 0  & 0  & 0 \\
0 & -1 &0 & 0  & 0  & 0 \\
0 & 0 & -1 & 0  & 0  & 0 \\
\end{array}
\right)
\]
It is clear by definition of ''nicely connectedness`` that the node $1$ does not control the complete graph. 
However, the real parts of the spectrum $\{\lambda_1, \bar \lambda_1, \lambda_2, \bar \lambda_2, \lambda_3, \bar \lambda_3\}$ are strictly negative 
\[
\lambda_1 \approx  - 0.25039 + 2.12688 i,\qquad \lambda_3 \approx  - 0.04139 + 1.96062 i, 
\qquad \lambda_5 \approx - 0.208261 + 0.49001 i, 
\]
such that $-A$ is Hurwitz stable and generic in the sense of Definition~\ref{def:generic}. 
After the lengthy but explicit calculations for the Brownian gyrator and the oscillator in Example~\ref{ex:oscBM}, it is obvious, that symbolic calculations could still be carried out, but become increasingly infeasible. 
\item Note that even if we generalize $B = L$ being a scalar L\'evy process, the (suboptimal) ergodicity (upper and lower) bounds of Theorem~\ref{thm:couplings} and the Gaussian (upper and lower) bounds in Theorem~\ref{thm:Gauss} remain valid and yield an exponential convergence towards the invariant measure at a rate which is proportional to $e^{-0.04139 t}$. 
\item In addition, Theorem~\ref{thm:cutoffWp} and Theorem~\ref{thm:cutoffWp} yield (simple) cutoff stability and window cutoff stability in the sense of item (1) and (2) in the introduction, for generic initial values $x$ along the asymptotic time scale $t_\e := -\ln(\e)/0.04139 $, $\e\in (0,1)$. Corollary~\ref{cor:observable} implies precutoff for all existing higher absolute moments of the $X$ along the same time scale $t_\e$. 
\item The preceding result highlights the advantage of the WKR-distance, since for our results in Section~\ref{s:ergodicitybounds} and Section~\ref{s:cutoffstability} we need not satisfy any of controllability (or irreducibility) properties, in contrast to typical for the total variation or the relative entropy. 
\end{enumerate}
\end{example}
}

\bigskip

{
\section{\textbf{Conclusion }} 
This article provides upper and lower bounds on the WKR-p distance between the time $t$ marginal of a multidimensional Ornstein-Uhlenbeck process with fixed (non-small) (Brownian or L\'evy) noise  amplitude and their respective dynamic equilibria, see Theorem~\ref{thm:couplings}. 
We also establish a new identity for WKR between Ornstein-Uhlenbeck systems driven by non-degenerate Brownian motion $\sigma\sigma^*=I$  with normal (or diagonalizable) interaction matrix, see Theorem~\ref{th:OUGcut}.
Such identity shows the following thermalization scenario as time $t$ grows: 
fast adaptation of the scale at the scale of the limiting distribution followed by a subsequent recentering of the location at a slower pace.  
This type of behavior is conjectured to be true for more general L\'evy driven systems. \\
These non-asymptotic results are applied for cutoff stability, that is, abrupt thermalization 
to $\e$ small distances in WKR along a particular $\e$-dependent time scale in Theorem~\ref{thm:cutoffWp} and Theorem~\ref{thm:window}. In Corollary~\ref{cor:observable} it is shown that the observables in our general setting also converge abruptly to the moments of the limiting distribution.\\ 
Applications are the Brownian or L\'evy gyrator, a single harmonic oscillator for instance in a genetic transcription-translation model, Jacobi-chains of linear oscillators with a heat bath in the extremes and more general network topologies.  For the single harmonic oscillator and the Brownian gyrator the WKR-2 distances are calculated explicitly illustrating the limitations of explicit formulas.
}

\section*{\textbf{Acknowledgments}}
\noindent
G.B. would like to express his gratitude to University of Helsinki, Department of Mathematics and
Statistics, for all the facilities used along the realization of this work. 
The authors thank Prof. Juan Manuel Pedraza, Physics Department at Universidad de los Andes, for helpful discussions, which have led to Example~\ref{ex:bio} and Example~\ref{ex:oscmoregeneral}.
They also thank the anonymous referees for the careful reading and helpful suggestions which have improved the quality of the manuscript.

\medskip

\noindent
\textbf{Funding.}
\hfill

\noindent
The research of G.B. has been supported by the Academy of Finland,
via an Academy project (project No. 339228) and the Finnish Centre of Excellence in Randomness and STructures (project No. 346306).
The research of M.A.H. has been supported by 
the project ``Mean deviation frequencies and the cutoff phenomenon'' (INV-2023-162-2850) 
of the School of Sciences (Facultad de Ciencias) at Universidad de los Andes.

\noindent

\medskip

\noindent
\textbf{Ethical approval.}
\hfill

\noindent
Not applicable.

\medskip

\noindent
\textbf{Competing interests.}
\hfill

\noindent
The authors declare that they have no conflict of interest.

\medskip

\noindent
\textbf{Authors' contributions.}
\hfill

\noindent
All authors have contributed equally to the paper.

\medskip

\noindent
\textbf{Availability of data and materials.}
\hfill

\noindent
Data sharing not applicable to this article as no data-sets were generated or analyzed during the current study.

\bigskip 

\appendix
\section{\textbf{Properties of the WKR-distance}}\label{a:WKRproperties}

\noindent Recall the WKR distance $\wW_p$ of order $p$ given in Definition~\ref{def:Wasserstein}. 

\begin{lemma}[Properties of the WKR distance]\label{lem:properties}\hfill\\
Let $p>0$, $x,y\in \RR^m$ be deterministic vectors, $c\in \RR$ and $X, Y$ be random vectors in $\RR^m$ with finite $p$-th moment. Then we have: 
\begin{itemize}
\item[a)] The WKR distance is a \textbf{metric} (or distance), 
in the sense of being definite, symmetric and satisfying the triangle inequality.
\item[b)] \textbf{Translation invariance:} 
$\mathcal{W}_p(x+X,y+Y)=\mathcal{W}_p(x-y+X,Y)$.
\item[c)] \textbf{Homogeneity:} 
\[
\mathcal{W}_p(cX,cY)=
\begin{cases}
|c|\;\mathcal{W}_p(X,Y),&\textrm{ if } p\in [1,\infty),\\
|c|^{p}\;\mathcal{W}_p(X,Y),&\textrm{if } p\in (0,1).
\end{cases}
\]
\item[d)] \textbf{Shift linearity:} 
For $p\geq 1$ it follows 
\begin{equation}\label{eq:shitflinearity}
\mathcal{W}_p(x+X,X)=|x|.
\end{equation}
For $p\in(0,1)$ equality (\ref{eq:shitflinearity}) is false in general. However,  {it holds} the following inequality
\begin{equation}\label{ec:cotaabajop01}
\max\{|x|^{p}-2\EE[|X|^p],0\}\leq 
\mathcal{W}_p(x+X,X)\leq |x|^{p}.
\end{equation}
\item[e)] \textbf{Domination:} For any given coupling $\mathcal{T}$ between 
$X$ and $Y$ it follows  
\[
\mathcal{W}_p(X, Y) \leq \Big(\int_{\mathbb{R}^m\times \mathbb{R}^m} |u-v|^p \mathcal{T}(\ud u,\ud v)\Big)^{\min\{1/p,1\}}.
\]
\item[f)] \textbf{Characterization:} Let $(X_n)_{n\in \mathbb{N}}$ be a sequence of random vectors with finite $p$-th moments 
and $X$ a random vector with finite $p$-th moment{. Then} the following {statements} are equivalent: 
\begin{enumerate}
 \item $\mathcal{W}_p(X_n, X) \rightarrow 0$ as $n\rightarrow \infty$. 
 \item $X_n \stackrel{d}{\longrightarrow} X$ as $n \rightarrow \infty$ and $\EE[|X_n|^p] \rightarrow \EE[|X|^p]$ as $n\rightarrow \infty$. \\
\end{enumerate}
\item[g)] \textbf{Contractivity:} Let $F:\mathbb{R}^m \to \mathbb{R}^k$, $k\in \mathbb{N}$, be Lipschitz continuous with Lipschitz constant~$1$. Then for any $p>0$ 
\begin{equation}
\mathcal{W}_p(F(X),F(Y))\leq \mathcal{W}_p(X,Y).
\end{equation}
\hfill
\end{itemize}
\end{lemma}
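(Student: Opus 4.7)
The plan is to derive each item from a direct pushforward manipulation of couplings, deferring the convergence characterization (f) to an existing reference. Items (a), (b), (c), (e), (g) are routine, while (d) requires care in the sub-linear regime $p\in(0,1)$.

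For (a), symmetry follows by swapping coordinates in any coupling; definiteness follows from the fact that the infimum is attained iff the coupling is concentrated on the diagonal iff the marginals agree; the triangle inequality follows from the gluing lemma applied to three measures $\mu_1, \mu_2, \mu_3$. For (b), given a coupling $\mathcal{T}$ of $(X, Y)$, its pushforward under $(u, v) \mapsto (u+x, v+y)$ is a coupling of $(x+X, y+Y)$ with cost $\int |x-y+u-v|^p\, \mathcal{T}(\ud u, \ud v)$, which is in bijective correspondence (via a further translation in the second coordinate) with the cost of a coupling of $(x-y+X, Y)$. For (c), the pushforward under $(u, v) \mapsto (cu, cv)$ rescales the cost integrand by $|c|^p$, and the outer exponent $\min\{1, 1/p\}$ then produces the factor $|c|$ for $p\geq 1$ and $|c|^p$ for $p\in(0,1)$. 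Item (e) is immediate from the definition of the infimum. For (g), the pushforward of any coupling of $(X, Y)$ by the map $(F, F)$ is a coupling of $(F(X), F(Y))$, and the pointwise inequality $|F(u)-F(v)|^p \leq |u-v|^p$ given by $1$-Lipschitz continuity survives integration and the outer exponent.

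The substantive item is (d). In either regime, the diagonal coupling $(X, X) \mapsto (x+X, X)$ realizes cost $|x|^p$, which gives the upper bound $|x|^p$ (respectively $|x|$ after the outer root for $p\geq 1$). For $p\geq 1$ and any coupling $\mathcal{T}$ of $(x+X, X)$, the marginal constraints force $\mathbb{E}_{\mathcal{T}}[U-V] = x$, and Jensen's inequality applied to the convex function $z \mapsto |z|^p$ gives
\begin{equation}
\Bigl(\int |u-v|^p\, \mathcal{T}(\ud u, \ud v)\Bigr)^{1/p} \geq \bigl|\mathbb{E}_{\mathcal{T}}[U-V]\bigr| = |x|,
\end{equation}
matching the upper bound after infimizing over $\mathcal{T}$. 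For $p \in (0, 1)$ the integrand $z \mapsto |z|^p$ is concave and the standard Jensen step goes the wrong way; instead the plan is to apply the sub-additivity $|a+b|^p \leq |a|^p + |b|^p$ (valid for $p \in (0, 1)$) twice, yielding $|U-V|^p \geq |U|^p - |V|^p$ and $|x+X|^p \geq |x|^p - |X|^p$ pointwise, then integrate and combine to obtain $\mathbb{E}_{\mathcal{T}}[|U-V|^p] \geq |x|^p - 2\mathbb{E}[|X|^p]$; the maximum with $0$ absorbs the trivial bound coming from non-negativity of the distance. A one-dimensional symmetric example with $p < 1$ (where a swap-type coupling produces cost strictly below $|x|^p$) shows that equality generally fails in this regime.

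Item (f) is precisely \cite[Theorem 6.9]{Vi09} and will be invoked as such. The main obstacle in executing this plan is the lower bound in (d) for $p \in (0, 1)$, where Jensen gives the wrong direction and one must route through the reverse sub-additivity of $t \mapsto t^p$; this is also the structural reason why the weaker bound $|x|^p - 2\mathbb{E}[|X|^p]$, rather than $|x|^p$, is the best one obtains without further assumptions on $X$.
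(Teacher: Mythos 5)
Your proposal is correct, and it is worth noting that the paper itself offers no in-text proof of this lemma: it simply defers items a), b), c), e), f) to \cite{Vi09} and the two non-standard items d) and g) to \cite[Lemma 2.2]{BHPWA}. What you have done is reconstruct those cited arguments in a self-contained way, and your reconstruction matches the standard route: pushforwards of couplings for b), c), e), g), the gluing lemma for the triangle inequality, the synchronous (diagonal) coupling for the upper bound in d), and for the lower bound with $p\geq 1$ the observation that every coupling $\mathcal{T}$ of $(x+X,X)$ has $\mathbb{E}_{\mathcal{T}}[U-V]=x$ (finite first moments are guaranteed since $p\geq 1$), so Jensen applied to $z\mapsto|z|^p$ gives $\big(\int|u-v|^p\,\mathcal{T}(\ud u,\ud v)\big)^{1/p}\geq|x|$; this is essentially the argument of \cite[Lemma 2.2]{BHPWA}. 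Your treatment of $p\in(0,1)$ via the subadditivity $|a+b|^p\leq|a|^p+|b|^p$ correctly yields $\mathbb{E}_{\mathcal{T}}[|U-V|^p]\geq|x|^p-2\mathbb{E}[|X|^p]$, and correctly uses that the outer exponent is $1$ in this regime. The only thin spot is the claim that equality fails in general for $p\in(0,1)$: you gesture at a ``swap-type'' coupling without exhibiting it. To close this, take $m=1$, $x>0$ and $X=\pm x/2$ with probability $1/2$ each; the coupling sending $x+X=x/2$ to $X=x/2$ and $x+X=3x/2$ to $X=-x/2$ has cost $\tfrac12\cdot 0+\tfrac12(2x)^p=2^{p-1}x^p<x^p$, so $\mathcal{W}_p(x+X,X)<|x|^p$. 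With that example supplied, your argument is complete and equivalent in substance to the proofs the paper cites.
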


For the proof we refer to \cite{Vi09} and for Item d) and g) to \cite[Lemma 2.2]{BHPWA}.

\bigskip

 \section{\textbf{Proof of Theorem~\ref{thm:couplings}}}\label{A:proofcouplings}

\begin{proof}[Proof of Theorem~\ref{thm:couplings}] 
We start with the proof of the first estimates in Item (1) and Item (2) $p\geq 1$. 
Note that for any random variable $X$ with finite $p$-th moment, and any deterministic vector $u\in \mathbb{R}^m$ it follows the recently established so-called shift linearity property, see \cite[Lemma~2.2~(d)]{BHPWA}
\begin{equation}\label{eq:shiftl}
\Wp(u+X,X)=|u|.
\end{equation}
Recall the decomposition~\eqref{eq:repre}.
The preceding equality with the help of the triangle inequality for $\Wp$ implies
\begin{equation}
\begin{split}
\mathcal{W}_p(X_t(x),\mu) &\leq \mathcal{W}_p(X_t(x),X_t(0))+\mathcal{W}_p(X_t(0),\mu)\\
& =\mathcal{W}_p(e^{-At}x+X_t(0),X_t(0))+\mathcal{W}_p(X_t(0),\mu)\\
&=|e^{-At}x|+\mathcal{W}_p(X_t(0),\mu).
\end{split}
\end{equation}
Conversely, the shift linearity~\eqref{eq:shiftl} and the triangle inequality for $\Wp$ yield
\begin{equation}
\begin{split}
|e^{-At}x|-\mathcal{W}_p(X_t(0),\mu) = \mathcal{W}_p(X_t(x),X_t(0))-\mathcal{W}_p(X_t(0),\mu)\leq \mathcal{W}_p(X_t(x),\mu).
\end{split}
\end{equation}
This shows the first inequalities in Item (1) and Item (2).

We continue with the proof of the second estimate of Item (2). 
We start with the observation that
\begin{equation}\label{eq:infcoupling}
\left|\mathbb{E}[X]-\mathbb{E}[Y]\right|\leq \mathcal{W}_1(X,Y)
\end{equation}
for any $X$ and $Y$ random {vectors} with finite first moment. Indeed, 
\begin{equation}
\left|\mathbb{E}[X]-\mathbb{E}[Y]\right|=
\left|
\int_{\mathbb{R}^m \times \mathbb{R}^m} {u}\Pi(\ud u,\ud v)-\int_{\mathbb{R}^m \times \mathbb{R}^m} v\Pi(\ud u,\ud v)\right|\leq 
\int_{\mathbb{R}^m} |u-v|\Pi(\ud u,\ud v)
\end{equation}
for any coupling $\Pi$ between $X$ and $Y$.
Minimizing over all couplings $\Pi$ we deduce the second inequality in Item (2) ($p\geq 1$).
Since 
\[
\mathbb{E}[X_t(x)]=e^{-At}x+e^{-At} \int_{0}^{t} e^{As}\sigma \mathbb{E}[L_1] \ud s,
\]
the proof of inequality 
the second inequality in Item (2) now follows straightforwardly with the help of Jensen's inequality.

In the sequel, we show the third inequality in Item (2).
For $p\in (0,1)$ we have 
\[\max\{|u|^p-2\mathbb{E}[|X|^p],0\}\leq \Wp(u+X,X)\leq |u|^p,\]
see \cite[Lemma 2.2~(d)]{BHPWA} and similarly we obtain the third statement in Item (2). 

\noindent Now, we prove the second inequality in Item~(1).
Using the same noise (synchronous coupling) in \eqref{eq:modelo0} we have
\[
\ud (X_t(x)-X_t(y))=-A(X_t(x)-X_t(y))\ud t,
\]
which yields pathwise
$X_t(x)-X_t(y)=e^{-At}(x-y)$ for all $t\geq 0$, $x,y \in \mathbb{R}^m$.
By the definition of $\Wp$ we have
\begin{equation}\label{eq:xy}
\Wp(X_t(x),X_t(y))\leq  |e^{-At}(x-y)|^{\min\{1,p\}}. 
\end{equation}
Finally, since the solution process~\eqref{eq:modelo0} is Markovian,
the disintegration inequality for $\Wp$ with the help of \eqref{eq:xy} and $X_t(\mu)\stackrel{d}{=}\mu$ yields
\begin{equation}
\begin{split}
\mathcal{W}_p(X_t(x),\mu)=\mathcal{W}_p(X_t(x),X_t(\mu))&\leq 
\int_{\mathbb{R}^m} \mathcal{W}_p(X_t(x),X_t(y))\mu(\ud y)\\
&
\leq \int_{\mathbb{R}^m} |e^{-At}(x-y)|^{\min\{1,p\}}\mu(\ud y).
\end{split}
\end{equation}
The lower bound $0$ in Item (2) is trivial. 
This finishes the proof. 
\end{proof}

\section{\textbf{Proof of Lemma~\ref{lem:Hurwitz}}}\label{a:Hurwitz}

\begin{proof} Item (1) follows directly by {negativity of the real parts of the spectrum.}
We continue with the proof of Item (2) and start with the counterexample. The matrices 
\[
A=\begin{pmatrix}
1 & 1\\
0 & 1
\end{pmatrix}\quad \textrm{ and }\quad 
B=\begin{pmatrix}
1 & 0\\
9 & 1
\end{pmatrix}
\]
are each Hurwitz stable, however, the matrix $A+B$ is not Hurwitz stable since its eigenvalues are  $5$ and $-1$. We now show that the commutativity of $A$ and $B$ implies the Hurwitz stability of $A+B$. 
Let $A, B\in \mathbb{R}^{m\times m}$ be each Hurwitz stable and define 
\[\beta:=\max\limits_{\lambda\in \textsf{Spect(B)}} \textsf{Re}(\lambda).\] Then for any $\epsilon>0$ there exists a positive constant $C_{B,\epsilon}$ such that $\|e^{Bt}\|\leq C_{B,\epsilon}e^{(\beta+\epsilon)t}$ for all $t\geq 0$.

\noindent Let $A$ be a Hurwitz stable square matrix and assume that  $AB=BA$.
Since $A$ is  a Hurwitz stable matrix, then for any $\epsilon>0$ we have the existence of a positive constant $C_{-A,\epsilon}$ such that
\[\|e^{-A t}\|\leq C_{-A,\epsilon}e^{(-\alpha+\epsilon)t}\quad \textrm{ for all }\quad t\geq 0,\] where 
$-\alpha:=\max\limits_{\lambda\in \textsf{Spect(-A)}} \textsf{Re}(\lambda)$. Note that 
$\alpha=\min\limits_{\lambda\in \textsf{Spect(A)}} \textsf{Re}(\lambda)>0$.
If we assume that 
\[
-\alpha+\beta<0,\]
then $A+B$ is Hurwitz stable. Indeed,
the matrix $A+B$ is Hurwitz stable if and only if  the linear system $\dot X_t=-(A+B)X_t$ is asymptotically stable. Note that $X_t=e^{-(A+B)t}x$ for any initial condition $x\in \mathbb{R}^m$.
Since $A$ and $B$ commute, by the Baker-Campbell-Hausdorff-Dynkin formula \cite[Chapter 5]{Ha15} we have  $X_t=e^{-At}e^{-Bt}x$. Then for any $\epsilon>0$, {the }submultiplicativity of the norm implies
\[|X_t|\leq \|e^{-At}\||e^{-Bt}x|\leq C_{-A,\epsilon} e^{(-\alpha+\epsilon) t}C_{B,\epsilon}e^{(\beta+\epsilon) t}|x|=C_{-A,\epsilon}C_{B,\epsilon}e^{(-\alpha+\beta+2\epsilon)t}|x| \quad \textrm{ for all }\quad t\geq 0.
\]
For $\epsilon>0$ small enough, we have that $-\alpha+\beta+2\epsilon<0$. Consequently the linear system $\dot X_t=-(A+B)X_t$ is asymptotically stable. This proves the claim of Item (2).
\end{proof}

\section{\textbf{Proof of Lemma~\ref{lem:expgenerica}}}\label{a:expgenerica}

\begin{proof} We show \eqref{eq:expgenerica} by constructing an appropriate value of $\rho$. 
Denote by  
\[
J_x := \{j\in \{1, \dots, m\}~|~c_j(x) \neq 0\}\neq \emptyset.  
\]
Then 
\[
e^{-At}x= \sum_{j\in J_x} e^{-\lambda_j t}c_j(x) v_j.  
\]
Define $\rho_x := \min\{\textsf{Re}(\lambda_j):j\in J_x\}>0$ 
and $R_x := \{j\in J_x~:~\mathsf{Re}(\lambda_j) = \rho_x\}$. 
Then for $\rho_x$ we have 
\begin{align*}
e^{\rho_x t} e^{-At}x
&=  \sum_{j\in R_x} e^{\rho_x t}e^{-\lambda_jt}c_j(x) v_j +
\sum_{j\in J_x\setminus R_x} e^{\rho_x t} e^{-\lambda_jt} c_j(x) v_j.  
\end{align*}
It is easy to see that the second sum on the right-hand side of the preceding display 
tends to $0$ as $t\ra\infty$. Therefore we obtain 
\[
\limsup\limits_{t\ra\infty}
|e^{\rho_x t} e^{-At}x|=
\limsup\limits_{t\ra\infty} 
|\sum_{j\in R_x} e^{\rho_x t}e^{-\lambda_jt}c_j(x) v_j|.
\]
The $\limsup $ in the preceding expression can be replaced by $\liminf $.
We note that for $\theta_j = \mathsf{Im}(\la_j)$ 
\[
e^{\rho_x t}e^{-\lambda_jt} = e^{-i\theta_j t} \qquad \mbox{ for any }j\in R_x. 
\]
By the triangular inequality we have 
\[
|\sum_{j\in R_x} e^{\rho_x t}e^{-\lambda_jt}c_j(x) v_j| \leq \sum_{j\in R_x} |c_j(x)| < \infty. 
\]
Now, we show that the lower limit is positive. By contradiction{, let us} assume that 
\[
\liminf_{t\ra\infty } |\sum_{j\in R_x} e^{-\theta_jt}c_j(x) v_j| = 0. 
\]
That is, there exists a sequence $(t_k)_{k\in \NN}$ with $t_k\ra\infty$, $k\ra\infty$, 
such that 
\[
\lim_{k\ra\infty } |\sum_{j\in R_x} e^{-\theta_jt_k}c_j(x) v_j| = 0. 
\]
By a diagonal argument we may assume that $\lim_{k\ra\infty} e^{-\theta_jt_k}= z_j$ for all $j\in R_x$, where $|z_j| = 1$. This yields 
\[
\sum_{j\in R_x} c_j(x) z_j v_j =  0, 
\]
which contradicts linear independence of $\{v_1, \dots, v_m\}$ due to the generic choice of $A$. 
In summary, {we have}
\[
0 < \liminf_{t\ra\infty} |e^{\rho_x t} e^{-At}x|\leq 
\limsup_{t\ra\infty} |e^{\rho_x t} e^{-At}x| \leq \sum_{j\in R_x} |c_j(x)| < \infty. 
\]
To deduce \eqref{eq:expgenerica} {it is sufficient} that $t \mapsto |e^{\rho_x t} e^{-At}x|$ is continuous and positive. 
\end{proof}


\begin{thebibliography}{40}

\bibitem{Al83}
Aldous, D., 
``Random walks on finite groups and rapidly mixing Markov chains'',  
{\it Seminar on Probability, XVII. Lecture Notes in Math.}
{\bf 986}, 243-297. Springer, Berlin, 1983.

\bibitem{AD87}
Aldous, D., and Diaconis, P.,  
``Strong uniform times and finite random walks'',  
{\it Adv. in Appl. Math.} {\bf 8}, no. 1 (1987) 69-97.

\bibitem{AD} 
Aldous, D., and Diaconis, P.,  
``Shuffling cards and stopping times'',  
{\it Amer. Math. Monthly} {\bf 93}, no. 5 (1986) 333-348.

\bibitem{APPLEBAUMBOOK} 
Applebaum, D.,  
``L\'evy processes and stochastic calculus'', 
Cambridge University Press, Cambridge, 2004.

{
\bibitem{BPS20}
Baldassarri, A., Puglisi, A., and Sesta, L.,
``Engineered swift equilibration of a Brownian gyrator''
{\it Phys. Rev. E} 102, 030105(R) (2020)
}

\bibitem{Ba18}
Barrera, G., 
``Abrupt convergence for a family of Ornstein-Uhlenbeck processes'', 
\textit{Braz. J. Probab. Stat.} 32 (2018), no. 1, 188--199.

\bibitem{BHPGBM}
Barrera, G., H\"ogele, M.A., and Pardo, J.C.,  
``Non-commutative geometric Brownian motion exhibits nonlinear cutoff stability'',  
{\it https://arxiv.org/abs/2207.01666} 

\bibitem{BHPWA}
Barrera, G., H\"ogele, M.A., and Pardo, J.C.,  
``Cutoff thermalization for Ornstein--Uhlenbeck systems with small L\'evy noise in the Wasserstein distance'', 
{\it J. Stat. Phys.} {\bf 184}, no. 27, (2021).

\bibitem{BHPWANO}
Barrera, G., H\"ogele, M.A., and Pardo, J.C.,  
``The cutoff phenomenon in Wasserstein distance for nonlinear stable Langevin systems with small L\'evy noise'', 
J. Dyn. Diff. Equat. (2022). https://doi.org/10.1007/s10884-022-10138-1

\bibitem{BHPTV}
Barrera, G., H\"ogele, M.A., and Pardo, J.C.,  
``The cutoff phenomenon in total variation for nonlinear Langevin systems with small layered stable noise'',  
{\it Electron. J. Probab.} {\bf 26}, no. 119 (2021) 1-76.

\bibitem{BHPSPDE}
Barrera, G., H\"ogele, M.A., and Pardo, J.C.,  
``The cutoff phenomenon for the stochastic heat and the wave equation subject to small L{\'e}vy noise'', 
\textit{Stoch. Partial Differ. Equ. Anal. Comput.} (2022)

\bibitem{BHPPSHELL}
Barrera, G., H\"ogele, M.A., Pardo, J.C., and Pavlyukevich, I., 
``Cutoff ergodicity bounds in Wasserstein distance for a viscous energy shell model with L{\'e}vy noise''  
{\it https://arxiv.org/abs/2302.13968} 

\bibitem{BJ}
Barrera, G., and Jara, M.,  
``Abrupt convergence of stochastic small perturbations of one dimensional dynamical systems'',  
\textit{J. Stat. Phys.} {\bf 163},  no. 1, (2016) 113-138.

\bibitem{BJ1}
Barrera, G., and Jara, M.,  
``Thermalisation for small random perturbation of hyperbolic dynamical systems'',  
\textit{Ann. Appl. Probab.} {\bf 30},  no. 3 (2020) 1164-1208.

\bibitem{BL21}
Barrera, G., Liu, S., 
``A switch convergence for a small perturbation of a linear recurrence equation'',  
\textit{Braz. J. Probab. Stat.} 35 (2021), no. 2, 224--241.

\bibitem{BL23}
Barrera, G., and Lukkarinen, J.,
``Quantitative control of Wasserstein distance between Brownian motion and the Goldstein-Kac telegraph process'',  
\textit{Ann. Inst. Henri Poincar{\'e} Probab.} 
Stat. 59 (2023), no. 2, 933--982.

\bibitem{BP}
Barrera, G., and Pardo, J.C.,  
``Cut-off phenomenon for Ornstein-Uhlenbeck processes driven by L\'evy processes'', 
\textit{Electron. J. Probab.} {\bf 25}, no. 15 (2020) 1-33, .

\bibitem{BY1} 
Barrera, J, Bertoncini, O., and Fern\'andez, R.,  
``Abrupt convergence and escape behavior for birth and death chains'',   
{\it J. Stat. Phys.} {\bf 137}, no. 4, (2009) 595-623.

\bibitem{BLY06}
Barrera, J., Lachaud, B., and Ycart, B.,  
``Cut-off for $n$-tuples of exponentially converging processes'',  
{\it Stochastic Process. Appl.} {\bf 116}, no. 10 (2006) 1433-1446.

\bibitem{BY2014} 
Barrera, J., and Ycart, B.,  
``Bounds for left and right window cutoffs'',  
{\it ALEA Lat. Am. J. Probab. Math. Stat.} {\bf 11} (2014) 445-458.

{
\bibitem{Ba14}
Barucca, P., 
``Localization in covariance matrices of coupled heterogeneous Ornstein-Uhlenbeck processes'', 
{\it Phys. Rev. E} 90, (2014) 062129. 
}

\bibitem{BASU} 
Basu, R, Hermon, J., and Peres, Y., 
``Characterization of cutoff for reversible Markov chains'',   
{\it Ann. Probab.} {\bf 45} (3), 2017, 1448--1487.

\bibitem{BKR17}
B{\'a}tkai, A., Kramar Fijav\v{z}, M., Rhandi, A., 
``Positive operator semigroups. From finite to infinite dimensions.'' 
With a foreword by Rainer Nagel and Ulf Schlotterbeck. 
\textit{Operator Theory: Advances and Applications,} 257. Birkh\"auser/Springer, Cham, 2017.

\bibitem{BOK10} 
Bayati, B., Owahi, H., and Koumoutsakos, P., 
``A cutoff phenomenon in accelerated stochastic simulations of chemical kinetics
via flow averaging (FLAVOR-SSA)'', 
{\it J. Chem. Phys.} {\bf 133}, 244-117, (2010).

\bibitem{BD92}
Bayer, D., and Diaconis, P.,  
``Trailing the dovetail shuffle to its lair'',  
{\it Ann. Appl. Probab.} {\bf 2}, no. 2 (1992) 294-313.

\bibitem{B-HLP19}
Ben-Hamou, A., Lubetzky, E., and Peres, Y.,  
``Comparing mixing times on sparse random graphs'',  
{\it Ann. Inst. Henri Poincar\'e Probab. Stat.} 
{\bf 55}, no. 2 (2019) 1116-1130.

\bibitem{BBF08}
Bertoncini, O., Barrera, J., and Fern{\'a}ndez, R.,  
``Cut-off and exit from metastability: two sides of the same coin'',  
{\it C. R. Math. Acad. Sci. Paris} {\bf 346}, no. 11-12 (2008) 691-696.

\bibitem{BJL19}
Bhatia, R., Jain, T., Lim, Y., 
''Inequalities for the Wasserstein mean of positive definite matrices``, 
\textit{Linear Algebra Appl.} 576 (2019), 108--123.

\bibitem{BJL19b}
Bhatia, R., Jain, T., Lim, Y., 
''On the Bures-Wasserstein distance between positive definite matrices``, 
\textit{Expo. Math.} 37 (2019), no. 2, 165--191.

\bibitem{Bh07}
Bhatia, R., 
''Positive definite matrices``, 
\textit{Princeton Series in Applied Mathematics.} 
Princeton University Press, Princeton, NJ, 2007.

{
\bibitem{BLL04}
Bonetto, F.,  Lebowitz, J. L., and Lukkarinen J., 
''Fourier’s Law for a Harmonic Crystal with Self-Consistent Stochastic Reservoirs``, 
\textit{Journal of Statistical Physics}, Vol. 116, Nos. 1/4, August 2004
}
\bibitem{BCS19}
Bordenave, C. , Caputo, P., and Salez, J., 
``Cutoff at the ``entropic time'' for sparse Markov chains``, 
{\it Probab. Theory Related Fields} 
{\bf 173}, no. 1-2 (2019) 261-292.

\bibitem{BCS18}
Bordenave, C. , Caputo, P., and Salez, J., 
''Random walk on sparse random digraphs.``  
{\it Probab. Theory Related Fields}, 
{\bf 170}, no. 3-4 (2018) 933-960.

\bibitem{BrzezniakZabczyk}
Brze\'zniak, Z., and Zabczyk, J.,  
''Regularity of Ornstein-Uhlenbeck processes driven by a L\'evy white noise``,  
{\it Potential Anal.} {\bf 32}, no. 2 (2010) 153-188.

{
\bibitem{CPLO07}
Chabot, J., Pedraza, J., Luitel, P. et al. 
''Stochastic gene expression out-of-steady-state in the cyanobacterial circadian clock.`` 
{\it Nature} 450, 1249--1252 (2007). 
}

\bibitem{CSC08} 
Chen, G., and Saloff-Coste, L.,  
''The cutoff phenomenon for ergodic Markov processes``, 
{\it Electron. J. Probab.} {\bf 13}, no. 3 (2008) 26-78.

\bibitem{CT23} 
Chhachhi, S., and Teng, F., 
''On the 1-Wasserstein Distance between Location-Scale Distributions and the Effect of Differential Privacy`` {\it arXiv:2304.14869v1 [math.PR] 28 Apr 2023}.

{
\bibitem{CPP20}
Chigarev, V., Kazakov, A., and Pikovsky, A., 
''Kantorovich-Rubinstein-Wasserstein distance between overlapping attractor and repeller``
{\it Chaos} 30, (2020) 073114. 
}

\bibitem{CS21}
Chleboun, P., and Smith, A.,  
''Cutoff for the square plaquette model on a critical length scale``, 
 {\it Ann. Appl. Probab.} {\bf 31}, no. 2 (2021) 668-702.

{
\bibitem{CEHR18}
Cuneo, N., Eckmann, J.P., Hairer, M., Rey-Bellet, L., 
''Non-equilibrium steady states for networks of oscillators.`` 
Electron. J. Probab. 23 1 - 28, 2018. 
}


\bibitem{CT16}
Czechowski, Z., Telesca, L., 
''Detrended fluctuation analysis of the Ornstein-Uhlenbeck process: Stationarity versus nonstationarity`` 
\textit{Chaos} 26, 113109 (2016)


\bibitem{DIA96} 
Diaconis, P., 
''The cut-off phenomenon in finite Markov chains``,  
{\it Proc. Nat. Acad. Sci. U.S.A.} {\bf 93}, no. 4 (1996) 1659-1664.

\bibitem{DLVV21}
Dinh, T. H., Le, C. T., Vo, B. K., Vuong, T. D., 
''The $\alpha$-z-Bures Wasserstein divergence``, 
Linear Algebra Appl. 624 (2021), 267--280.

\bibitem{DL82}
Dowson, D. C., Landau, B. V., 
''The Fr{\'e}chet distance between multivariate normal distributions``, 
\textit{J. Multivariate Anal.} 12 (1982), no. 3, 450--455.

{
\bibitem{DBT23}
Du Buisson, J., and Touchette, H., 
''Dynamical large deviations of linear diffusions``
\textit{Phys. Rev. E} 107, (2023) 054111.  
}


{
\bibitem{FG21}
Figalli, A., Glaudo, F., 
''An invitation to optimal transport, Wasserstein distances, and gradient flows.`` 
\textit{EMS Textbk. Math.}
EMS Press, Berlin, 2021. 
}

{
\bibitem{GHKM17}
Gairing, J.M., H\"ogele, M.A., Kosenkova, T., and Monahan, A.H.,
''How close are time series to power tail L\'evy diffusions? ``,  
{\it Chaos} {\bf 27}, (2017) 073112.
}

\bibitem{Gel}
Gelbrich, M.,  
''On a formula for the $L^2$ Wasserstein metric between measures on Euclidean and Hilbert Spaces``,  
{\it Math. Nachr.} {\bf 147}, (1990) 185-203.

{
\bibitem{GTR23}
Gilson, M., Tagliazucchi, E., and Cofr{\'e}, R.,
Entropy production of multivariate Ornstein-Uhlenbeck processes correlates with consciousness levels in the human brain
{\it Phys. Rev. E 107}, (2023) 024121.
}

\bibitem{Givens}
Givens, C.R., and Shortt, R.M.,  
''A class of Wasserstein metrics for probability distributions``, 
{\it Michigan Math. J.} {\bf 31}, no. 2, (1984) 231-240.

{
\bibitem{GL19}
Godr{\`e}che, C., and Luck, J.-M., 
Characterising the nonequilibrium stationary states
of Ornstein-Uhlenbeck processes
{\it J. Phys. A: Math. Theor.} 52 035002 (2019)
}

\bibitem{Ha15}
Hall, B., 
''Lie groups, Lie algebras, and representations. An elementary introduction.``,  Second edition, Springer Graduate Texts in Mathematics 222, 2015.

\bibitem{HS20}
Hermon, J., and Salez, J., 
''Cutoff for the mean-field zero-range process with bounded monotone rates``, 
{\it Ann. Probab.} {\bf 48}, no. 2 (2020) 742-759.



\bibitem{Ja17}
Jacobsen, M., 
''Laplace and the origin of the Ornstein-Uhlenbeck process``, 
\textit{Bernoulli} 2 (1996), no. 3, 271-286.

{
\bibitem{JPS17}
Jak\v{s}i\'{c}, V., Pillet,  C.,  and Shirikyan, A., 
''Entropic fluctuations in thermally driven harmonic networks``, 
{\it J. Stat. Phys.} {\bf 166}, 926-1015,
(2017).
}

\bibitem{Ja68}
Jameson, A., 
''Solution of the equation $AX+XB=C$ by inversion of an $M\times M$ or $N\times N$ matrix``, 
\textit{SIAM J. Appl. Math.} 16 (1968), 1020--1023.

{
\bibitem{JL14}
Janakiraman, D., Sebastian, K. L., 
''Unusual eigenvalue spectrum and relaxation in the L{\'e}vy-Ornstein-Uhlenbeck process'', 
{\it Phys Rev E Stat Nonlin Soft Matter Phys} (2014) 90(4):040101.
}


\bibitem{JohnssonTicozziViola17}
Johnson, P.D., Ticozzi, F., and  Viola, L., 
''Exact stabilization of entangled states in finite time by dissipative quantum circuits``, 
{\it Phys. Rev. A} {\bf 96}, 012308, (2017).

\bibitem{Jonson} 
Jonsson, G.F., and Trefethen, L.N.,  
''A numerical analysis looks at the ‘cut-off phenomenon’ in card shuffling
and other Markov chains.`` 
In: Numerical Analysis 1997 (Dundee 1997), pp. 150-178. Addison Wesley
Longman, Harlow (1998).

\bibitem{KS14}
Kallianpur, G., and Sundar, P.,  
''Stochastic analysis and diffusion processes``, Oxford Graduate
Texts in Mathematics 24. Oxford University Press, Oxford, 2014. xii+352 pp. MR-3156223

\bibitem{Kastoryano12} 
Kastoryano, M.J., Reeb, D., and Wolf, M.M.,  
A cutoff phenomenon for quantum Markov chains.
{\it J. Phys. A} {\bf 45}, 075307, (2012).

\bibitem{Kastoryano13} 
Kastoryano, M.J.,  Wolf, M.M., and Eisert, J.,  
''Precisely Timing Dissipative Quantum Information Processing``, 
{\it Phys. Rev. Lett.} {\bf 110}, 110501, (2013).

\bibitem{LL19} 
Labb{\'e}, C., and Lacoin, H.,  
''Cutoff phenomenon for the asymmetric simple exclusion process and the biased card shuffling``, 
{\it Ann. Probab.} {\bf 47}, no. 3 (2019) 1541-1586.


\bibitem{Lachaud2005}
Lachaud, B.,  
''Cut-off and hitting times of a sample of Ornstein-Uhlenbeck process and its
average``, {\it J. Appl. Probab.} {\bf 42}, no. 4 (2005) 1069-1080.

\bibitem{La16} 
Lacoin, H., 
''The cutoff profile for the simple exclusion process on the circle``,  
{\it Ann. Probab.} {\bf 44}, no. 5 (2016) 3399-3430.

\bibitem{LT85}
Lancaster, P., and Tismenetsky, M., 
''The Theory of Matrices``, 
{\it 2nd ed. Computer Science and
Applied Mathematics.} Academic Press, Orlando, FL., (1985) 

\bibitem{LanciaNardiScoppola12}
Lancia, C., Nardi, F., and Scoppola, B., 
''Entropy-driven cutoff phenomena``, 
{\it J. Stat. Phys.} {\bf 149}, no. 1 (2012) 108-141.

\bibitem{Langvin1908}
Langevin, P., 
''Sur la th{\'e}orie du mouvement brownien [On the Theory of Brownian Motion]``,  
{\it C. R. Acad. Sci. Paris.} 146: 530-533. (1908)

\bibitem{LLP10} 
Levin, D., Luczak, M., and Peres, Y., 
''Glauber dynamics for mean-field Ising model: cut-off, critical power law, and metastability``, 
{\it Probab. Theory Relat. Fields} {\bf 146}, no. 1 (2010) 223-265.

\bibitem{LPW} 
Levin, D, Peres, Y., and Wilmer, E., 
''Markov chains and mixing times``,  
{\it Amer. Math. Soc.}, Providence, 2009.

\bibitem{LW08}
Liang, T, and West, M. 
''Numerical Evidence for Cutoffs in Chaotic Microfluidic Mixing.`` 
in: Proceedings of the ASME 2008 Dynamic Systems and Control Conference. 
ASME 2008 Dynamic Systems and Control Conference, 
Parts A and B. Ann Arbor, Michigan, USA. October 20–22, 2008. pp. 1405--1412. 

\bibitem{LubetzkySly13}
Lubetzky, E., and Sly, A., 
''Cutoff for the Ising model on the lattice``, 
{\it Invent. Math.} {\bf 191}, no. 3 (2013) 719-755.

\bibitem{Mao2008} 
Mao X., 
''Stochastic differential equations and applications``,  
Second edition. Horwood Publishing Limited, Chichester, 2008.

\bibitem{MPZ19}
Masarotto, V., Panaretos, V. M., Zemel, Y., 
''Procrustes metrics on covariance operators and optimal transportation of Gaussian processes``, 
Sankhya A 81 (2019), no. 1, 172--213.

\bibitem{Ma04}
Masuda, H.,  
''On Multidimensional Ornstein-Uhlenbeck process driven by a general L\'evy process``, 
Bernoulli 10(1), 97--120 (2004)

\bibitem{Meliot14}
M\'eliot, P.-L., 
''The cut-off phenomenon for Brownian motions on compact symmetric spaces``, 
{\it Potential Anal.} {\bf 40}, no. 4 (2014) 427-509, .

\bibitem{Mikami} 
Mikami, T.,  
''Asymptotic expansions of the invariant density of a
Markov process with a small parameter``,  
\emph{Ann. Inst. H. Poincar\'e Sect. B} {\bf 24}, no. 3, (1988), 403--424.

\bibitem{Mi22}
Minh, H. Q., 
''Alpha Procrustes metrics between positive definite operators: a unifying formulation for the Bures-Wasserstein and log-Euclidean/log-Hilbert-Schmidt metrics``, 
\textit{Linear Algebra Appl.} 636 (2022), 25--68.

\bibitem{Pego17}
Murray, R.W., and Pego, R.L.,  
''Cutoff estimates for the Becker-D\"oring equations``,    
{\it Commun. Math. Sci.} {\bf 15},  1685-1702, (2017).

\bibitem{Pego16}
Murray, R.W., and Pego, R.L.,  
''Algebraic decay to equilibrium for the Becker-D\"oring equations``,   
{\it SIAM J. Math. Anal. } {\bf 48}, no. 4, 2819-2842, (2016).

\bibitem{OP82}
Olkin, I., Pukelsheim, F., 
''The distance between two random vectors with given dispersion matrices``, 
\textit{Linear Algebra Appl.} 48 (1982), 257--263.

\bibitem{dOTL18}
D'Onofrio, G., Tamborrino, M., Lansky, P., 
''The Jacobi diffusion process as a neuronal model`` 
\textit{Chaos} 28, 103119 (2018)

{
\bibitem{OPV14}
Optimal transportation.
Theory and applications. Including papers from the Summer School ``Optimal Transportation: Theory and Applications'' 
held at the University of Grenoble I. 
Ollivier, Y., Pajot H., Villani, C., (ed.), 
\textit{London Math. Soc. Lecture Note Ser.}, 413
Cambridge University Press, Cambridge, 2014. 
}

\bibitem{Panaretos}
Panaretos, V.M., and Zemel,  Y., 
''An invitation to statistics in Wasserstein space``, 
SpringerBriefs in Probability and Mathematical Statistics, 2020.

\bibitem{Pavliotis}
Pavliotis, G.A., 
''Stochastic Processes and Applications, Diffusion Processes, the Fokker-Planck and Langevin Equations``
Springer New York 2014. 

{
\bibitem{PC19}
Peyr{\'e}, G., and Cuturi, M., 
''Computational Optimal Transport: With Applications to Data Science``, 
\textit{Foundations and Trends in Machine Learning,} Vol. 11: No. 5-6,355-607, 
2019. 
}

\bibitem{PJDS14}
Pigoli, D., Aston, J. A. D., Dryden, I. L., Secchi, P., 
''Distances and inference for covariance operators``, 
\textit{Biometrika} 101 (2014), no. 2, 409--422.

\bibitem{Protter}
Protter, P., 
''Stochastic integration and differential equations. A new approach.`` 
Applications of Mathematics New York 21. Springer-Verlag, Berlin, 1990.

\bibitem{RAQUEPAS}
Raqu{\'e}pas, R.,     
''A note on Harris ergodic theorem, controllability and perturbations of harmonic networks``,  
{\it Ann. Henri Poincar{\'e}} {\bf 20},  605-629, (2019).


\bibitem{SP23+}
Santoro, L.V., and Panaretos, V.M.,  
''Large Sample Theory for Bures-Wasserstein Barycentres``, 
\textit{https://arxiv.org/abs/2305.15592}

{
\bibitem{SSB23}
Sarkar, R., Santra, I., and Basu, U., 
''Stationary states of activity-driven harmonic chains``,  
\textit{Phys. Rev. E} 107, (2023) 014123. 
}

\bibitem{Sato} 
Sato, K., 
''L{\'e}vy processes and infinitely divisible distributions``,  
Cambridge University Press (1999).

\bibitem{SaYa}
Sato, K., and Yamazato, M.,  
''Operator-self-decomposable distributions as limit distributions of processes of Ornstein-Uhlenbeck type.`` \emph{Stochastic Process. Appl.} {\bf 17}, no. 1, (1984), 73--100.

{
\bibitem{Sch13}
Schuss, Z., 
''Brownian dynamics at boundaries and interfaces: In physics, chemistry, and biology.``  
New York, Springer, 2013.
}

\bibitem{Simon11}
Simon, T.,  
''On the absolute continuity of multidimensional Ornstein-Uhlenbeck processes``, 
{\it Probab. Theory Related Fields} {\bf 151}, no. 1-2, (2011), 173-190.

{
\bibitem{SGA18}
Singh, R., Ghosh, D., and Adhikari, R., 
''Fast Bayesian inference of the multivariate Ornstein-Uhlenbeck process``,
{\it Phys. Rev. E } 98, (2018) 012136. 
}

\bibitem{vSmoluchovski1906}
von Smoluchowski, M., 
''Zur kinetischen Theorie der Brownschen Molekularbewegung und der Suspensionen``,  
{\it Ann. Phys. (Berlin)} 21 (14): 756-780, (1906)
https://doi.org/10.1002/andp.19063261405

\bibitem{Ta11}
Takatsu, A., 
''Wasserstein geometry of Gaussian measures``, 
Osaka J. Math. 48 (2011), no. 4, 1005--1026.

{
\bibitem{TL19}
Thomas, P.J., and Lindner, B., 
''Phase descriptions of a multidimensional Ornstein-Uhlenbeck process``,  
{\it Phys. Rev. E} 99, (2019) 062221. 
}

\bibitem{Trefethenbrothers}
Trefethen, L.N., and Trefethen, L.M., 
''How many shuffles to randomize a deck of cards? `` 
{\it Proc. R. Soc. Lond. Ser. A Math. Phys. Eng. Sci.}  
{\bf 456}, no. 8 (2000) 2561-2568.

\bibitem{OrensteinUhlenbeck1930}
Uhlenbeck, G. E., Ornstein, L. S., 
''On the theory of Brownian Motion``,  
{\it Phys. Rev.} 36 (5): 823-841. (1930)

\bibitem{Vernier20}
Vernier, E.,  
''Mixing times and cutoffs in open quadratic fermionic systems``  
\textit{SciPost Phys} \textbf{9}, no. 049, 1-30, (2020).


\bibitem{Vi09}
Villani, C. ,
''Optimal transport, old and new``, 
Springer, 2009.


\bibitem{WANGLETTERS} 
Wang, J.,  
''Exponential ergodicity and strong ergodicity for SDEs driven by symmetric $\alpha$-stable processes``,  \textit{Appl. Math. Lett.} {\bf 26}, no. 6, 654--658, (2013). 

\bibitem{WangJAP}
Wang, J.,  
''On the Exponential Ergodicity of L\'evy-Driven Ornstein-Uhlenbeck Processes``, 
\textit{Journal of Applied Probability}, 49(4), 990-1004 (2012). 

\bibitem{WC18}
Wang M., and Christov, I. C., 
''Cutting and shuffling with diffusion: Evidence for cut-offs in interval exchange maps`` 
Phys. Rev. E 98, 022221 (2018)


\bibitem{Yc99} 
Ycart, B.,  
''Cutoff for samples of Markov chains``,  
{\it ESAIM, Probab. Stat.} {\bf 3}  89-106 (1999). 

\bibitem{ZP19}
Zemel, Y., Panaretos, V. M.,
''Fr{\'e}chet means and Procrustes analysis in Wasserstein space``, 
Bernoulli 25 (2019), no. 2, 932--976.

\end{thebibliography}
\end{document}